\newcommand\figurewidth{0.48}
\def\ps@IEEEtitlepagestyle{%
  \def\@oddfoot{\mycopyrightnotice}%
  \def\@evenfoot{}%
}
\def\mycopyrightnotice{%
  {\footnotesize This work has been submitted to the IEEE for possible publication. Copyright may be transferred without notice, after which this version may no longer be accessible.\hfill}
  \gdef\mycopyrightnotice{}
}
\newtheorem{theorem}{Theorem}
\newtheorem{example}{Example}
\newtheorem{assumption}{Assumption}
\newtheorem{lemma}{Lemma}
\newtheorem{remark}{Remark}
\newtheorem{envdef}{Definition}
\newtheorem{coro}{Corollary}
\newcommand{\abs}[1]{\lvert#1\rvert}%
\newcommand{\norm}[1]{\lVert#1\rVert}
\newcommand{\bnorm}[1]{\big\lVert#1\big\rVert}
\newcommand{\Bnorm}[1]{\Big\lVert#1\Big\rVert}
\newcommand{\blkd}[1]{\text{blkd}(#1)}
\newcommand{\diag}[1]{\text{diag}(#1)}
\newcommand{\cl}[1]{\text{cl}(#1)}
\newcommand{\expt}[2]{\mathbb{E}_{#1}[#2]}
\newcommand{\Bexpt}[2]{\mathbb{E}_{#1}\Big[#2\Big]}
\DeclarePairedDelimiter\floor{\lfloor}{\rfloor}
\newcommand\ubar[1]{\stackunder[1.2pt]{$#1$}{\rule{.8ex}{.075ex}}}
\DeclareMathOperator{\dom}{dom}
\DeclareMathOperator{\minimize}{minimize}
\DeclareMathOperator{\maximize}{maximize}
\DeclareMathOperator{\subj}{subject\:to}
\newcommand{\playerN}{\mathcal{N}}
\newcommand{\bone}{\boldsymbol 1}
\newcommand{\bzero}{\boldsymbol 0}
\newcommand{\rset}[2]{\mathbb{R}^{#1}_{#2}}
\newcommand{\nset}[2]{\mathbb{N}^{#1}_{#2}}
\newcommand{\vol}[1]{\text{vol}(#1)}
\DeclareMathOperator{\argmin}{argmin}
\DeclareMathOperator{\argmax}{argmax}
\DeclareMathOperator{\proj}{Proj}
\def\BibTeX{{\rm B\kern-.05em{\sc i\kern-.025em b}\kern-.08em
    T\kern-.1667em\lower.7ex\hbox{E}\kern-.125emX}}
\begin{document}

\title{Zeroth-Order Learning in Continuous Games via Residual Pseudogradient Estimates}
\author{Yuanhanqing Huang and Jianghai Hu
\thanks{This work was supported by the National Science Foundation under Grant No. 2014816 and No.2038410. }
\thanks{The authors are with the Elmore Family School of Electrical and Computer Engineering, Purdue University, West Lafayette, IN, 47907, USA (e-mail: huan1282@purdue.edu; jianghai@purdue.edu).}
}

\maketitle

\begin{abstract}
A variety of practical problems can be modeled by the decision-making process in multi-player games where a group of self-interested players aim at optimizing their own local objectives, while the objectives depend on the actions taken by others. 
The local gradient information of each player, essential in implementing algorithms for finding game solutions, is all too often unavailable. 
In this paper, we focus on designing solution algorithms for multi-player games using bandit feedback, i.e., the only available feedback at each player's disposal is the realized objective values. 
To tackle the issue of large variances in the existing bandit learning algorithms with a single oracle call, we propose two algorithms by integrating the residual feedback scheme into single-call extra-gradient methods. 
Subsequently, we show that the actual sequences of play can converge almost surely to a critical point if the game is pseudo-monotone plus and characterize the convergence rate to the critical point when the game is strongly pseudo-monotone. 
The ergodic convergence rates of the generated sequences in monotone games are also investigated as a supplement. 
Finally, the validity of the proposed algorithms is further verified via numerical examples. 
\end{abstract}


\section{Introduction}
Driven by the proliferation of networked engineering systems with competition over common resources, solving decision-making problems in multi-agent systems with competing interests has drawn an exponentially increasing research interest from the systems and control community \cite{li2022confluence}. 
Examples include smart grid management \cite{maharjan2013dependable, zhu2012differential}, wireless and communication networks \cite{han2012game, zhu2012interference}, transportation systems \cite{vu2021fast}, etc. 
Originating from the seminal work \cite{nash1950equilibrium, neumann2007theoryofgame}, game theory provides the theoretical tools and frameworks to model and analyze the interactions and dynamics of self-interested players. 
Specifically, in the Nash equilibrium problem (NEP), each player independently chooses one action from its strategy set, the preferences of which are indicated by a local objective function. 
In addition to depending on the player's own action, the local objective function is also influenced by the actions taken by other players. 
A central problem at the forefront of this field is the design of algorithms or dynamics, through which, the subscribing players can arrive at stationary solutions, such as Nash equilibria (NEs). 

Most of the existing methods such as \cite{mertikopoulos2019learning,yi2019operator,tatarenko2020geometric} leverage the local partial gradient information to do the update, which necessitates the existence of first-order oracles. 
As the partial gradient information depends on the action taken by other players, the computation of the first-order information requires information from all the participants.
The practical limitations on the communication and computation resources prohibit the use of such centralized first-order oracles, especially in a large-scale multi-agent setup. 
In view of this, there arises a stream of effort in designing algorithms for games on networks that distribute the computation of the partial gradient information and only require local communication within the neighborhood \cite{pavel2019distributed,bianchi2022fast,huang2022distributed}. 
Nevertheless, each player needs to maintain local estimates of others' actions, which elicits the scalability issue when confronted with large global strategy spaces. 
Other concerns could be that players are unwilling to disclose their actions, uncertain about their own objective models, or more extremely, completely oblivious to the existence of the game. 
This motivates us to relax the assumption about feedback and consider the bandit setup, where the only feedback information a player can observe is the realized objective function value. 
To be more specific, this group of players will follow a typical online-learning paradigm that unfolds as follows: at each iteration,  every player selects an action, observes the realized objective function value, and updates its action accordingly and the process repeats \cite{cesa2006prediction,hazan2016introduction}. 


\textit{Related Work:} 
The application of the extra-gradient (EG) methods in solving variational inequalities and finding equilibria in games has a long history, and its early derivations can be found in the work \cite{korpelevich1976extragradient,nemirovski2004prox,nesterov2007dual}. 
In the past decades, a considerable number of the stochastic variants of EG have been developed with their convergence properties investigated in detail \cite{juditsky2011solving,kannan2019optimal,iusem2017extragradient}. 
Compared with other approaches such as the forward-backward method \cite{yi2019operator} or the mirror descent method \cite{zhou2017mirror}, EG can guarantee the convergence of the actual sequences, and the associated ergodic average sequences converge at the rate of $O(1/t)$. 
On the other hand, EG doubles the number of queries for first-order information and projection operations per iteration, which could considerably compromise the algorithms' performance, especially in large-scale problems. 
To reduce the query and computation cost induced by the extra step, significant efforts have been devoted to developing single-call variants of EG by substituting one of the queries and the projections with some approximations based on the information available 
\cite{hsieh2019convergence,gidel2018variational, azizian2021last,malitsky2015projected, cui2016analysis}. 

In the field of optimization, the zeroth-order (or derivative-free) methods have been extensively studied, which approximate the absent gradient information via the perturbed function values received from bandit oracles \cite{flaxman2005online}. 
The single-point methods \cite{gasnikov2017stochastic} leverage one oracle query to procure the gradient estimate, which makes them more attractive for implementation though at the cost of large variance. 
On the other hand, the multi-point methods compute the gradient estimate with two or more queries, which keeps the variance under control, yet complicates the implementation, especially in a time-varying environment. 
To reap the benefits from both, Zhang et al. \cite{zhang2022new} considered a residual-feedback scheme to control the estimation variance. 
The proposed scheme only uses a single query per iteration and matches the performance of two-point zeroth-order methods. 
Inspired by the idea of extremum seeking control, Chen et al. \cite{chen2022improve} developed a novel high/low-pass filter single-point method that further improves the dependency of convergence rates on the problem dimensions. 

As for the literature about learning in games with bandit feedback, Bravo et al. \cite{bravo2018bandit} designed a single-point bandit learning process via the simultaneous perturbation stochastic approximation approach. 
The proposed algorithm has been proved to converge a.s. in games that satisfy diagonal strict concavity and possess an $O(t^{-1/3})$ asymptotic convergence rate when the game is strongly monotone. 
Tatarenko et al. \cite{tatarenko2020bandit} extended the scope of games to merely monotone cases via Tikhonov regularization and developed a no-regret single-point learning algorithm that works in the single timescale, where four decaying sequences should be tuned properly to ensure the convergence. 
In a more recent work \cite{tatarenko2022rate}, Tatarenko et al. introduced a single-point and a two-point bandit learning approaches and proved that if the game is strongly monotone, the convergence rates for them are $O(t^{-1/2})$ and $O(t^{-1})$, respectively. 
Besides, Lin et al. \cite{lin2021optimal} focused on the improvement of solutions for multi-player games to achieve optimal regret and a faster convergence rate. 
They developed a mirror descent variant of the barrier-based family of bandit learning algorithms and proved that this variant is no-regret and converges at a rate of $O(t^{-1/2})$.
However, the scope of this work is limited to strongly monotone games.

\textit{Our Contributions:} 
In view of all the above, our paper tries to address the question of whether or not we can combine the merits of single-call EG and residual feedback from zeroth-order optimization and improve the performance of single-point bandit learning in games. 
Motivated by this, two algorithms are proposed by integrating the idea of residual feedback into optimistic mirror descent and reflected gradient descent, respectively. 
First, we complement the existing results in \cite{zhang2022new} by showing that, in the field of multi-player games, the residual estimation is an unbiased estimate of a smoothed version of the pseudogradient. 
Subsequently, we establish a uniform constant upper bound for the variance of the residual estimate when it is applied in the proposed algorithms. 
Secondly, we prove that if the game satisfies the pseudo-monotone plus assumption, the actual sequences of play generated by the proposed algorithms converge a.s. to a critical point of it. 
Compared with \cite{bravo2018bandit}, our convergence results are obtained under more relaxed regularity assumptions. 
In addition, we analyze the ergodic sequences generated by the proposed algorithms in monotone games and show that they converge at a rate of $O(t^{-(1/2 - \epsilon)})$ for some positive constant $\epsilon$ that can be made arbitrarily small by properly tuning the step size and the exploration radius. 
Lastly, we focus on the convergence speeds of the proposed algorithms in strongly pseudo-monotone games and show that the actual sequences of play can converge to the critical points at a rate of $O(t^{-(1-\epsilon)})$, which considerably accelerate the learning process compared with the existing methods. 
Moreover, the performances of the solution algorithms are empirically compared via the multi-building thermal control problem in Section~\ref{sect:thrm-ctrl}, which illustrates that the proposed algorithms enjoy faster convergence and demonstrate less estimation variance. 

\begin{table}
\begin{tabular}{||c|c|c|c||}
\hline
  \multirow{2}{*}{Ref.}  & Regularity for   & \multicolumn{2}{c|}{Convergence Rate Under} \\ 
  & Convergence & \multicolumn{2}{c|}{Strong Monotonicity} \\
\hline\hline
    \cite{bravo2018bandit}  & Strictly monotone & \multicolumn{2}{c|}{$O(t^{-1/3})$}\\
\hline
    \cite{tatarenko2020bandit} & Merely monotone  & \multicolumn{2}{c|}{N/A} \\
\hline
    \multirow{2}{*}{\cite{tatarenko2022rate}}  & \multirow{2}{*}{Strongly monotone} & Single-query & Two-query \\
    \cline{3-4} & & $O(t^{-1/2})$ & $O(t^{-1})$ \\
\hline
    \cite{lin2021optimal} & Strongly monotone & \multicolumn{2}{c|}{$O(t^{-1/2})$} \\
\hline
    \makecell{Alg.~\ref{alg:optm-mrdesc} \\ and \ref{alg:refl-desc}} & \makecell{Pseudo-monotone \\ plus} & \multicolumn{2}{c|}{$O(t^{-(1-\epsilon)})$} \\
\hline
\end{tabular}
\caption{Bandit learning algorithms for multi-player games: monotonicity assumption and last-iterate convergence rate}
\end{table}

\textit{Organization:} 
In Section~\ref{sect:prob-form}, we formally formulate the multi-player games under study, with the solution concepts, some basic definitions, and assumptions included. 
Moreover, we briefly introduce mirror maps, the associated concepts, and the motivation to use them in this work. 
In Section~\ref{sect:algms}, we introduce the residual feedback scheme to leverage and propose two single-point bandit algorithms. 
The bias and variance of the pseudogradient estimation error are analyzed, which serve as important lifting tools for the later proof. 
Subsequently, in Section~\ref{sect:convg-res}, we state and prove three main convergence results for the proposed algorithms under different regularity assumptions and metrics. 
In Section~\ref{sect:simu}, to demonstrate the theoretical findings and the effectiveness of the proposed algorithms in practical applications, we include three numerical examples: a portfolio optimization problem, a parameter learning problem in linear models, and an optimal thermal management problem in buildings.  
Section~\ref{sect:conclu} concludes the paper and highlights potential extensions and applications. 

\textit{Basic Notations:} 
For a set of matrices $\{V_i\}_{i \in S}$, we let $\blkd{V_1, \ldots, V_{|S|}}$ or $\blkd{V_i}_{i \in S}$ denote the diagonal concatenation of these matrices, $[V_1, \ldots, V_{|S|}]$ their horizontal stack, and $[V_1; \cdots; V_{|S|}]$ their vertical stack. 
For a set of vectors $\{v_i\}_{i \in S}$, $[v_i]_{i \in S}$ or $[v_1; \cdots; v_{|S|}]$ denotes their vertical stack. 
For a vector $v$ and a positive integer $i$, $[v]_i$ denotes the $i$-th entry of $v$. 
Denote $\rset{}{+} \coloneqq [0, +\infty)$, $\rset{}{++} \coloneqq (0, +\infty)$, and $\nset{}{+} \coloneqq \nset{}{} \backslash \{0\}$. 
We let $\norm{\cdot}_2$ represent the Euclidean norm, $\norm{\cdot}$ a general norm, and $\norm{\cdot}_*$ its dual.
For a set $\mathcal{S}$, let $\mathbbm{1}_{\mathcal{S}}$ denote the indicator function for this set, i.e., $\mathbbm{1}_{\mathcal{S}}(x) = 1$ if $x \in \mathcal{S}$ and $0$ otherwise. 
The notation $N_{\mathcal{S}}(x)$ denotes the normal cone to the set $\mathcal{S} \subseteq \rset{n}{}$ at the point $x$: if $x \in \mathcal{S}$, then $N_\mathcal{S}(x) \coloneqq \{u \in \rset{n}{} \mid \sup_{z \in \mathcal{S}} \langle u, z-x \rangle \leq 0 \}$; otherwise, $N_\mathcal{S}(x) \coloneqq \varnothing$. 
Let $\cl{\mathcal{S}}$ denote the closure of set $\mathcal{S}$, $\text{int}(\mathcal{S})$ the interior, and $\partial \mathcal{S}$ the boundary. 
If $\mathcal{S} \in \rset{n}{}$ is a closed and convex set, the map $\proj_\mathcal{S}:\rset{n}{} \to \mathcal{S}$ denotes the projection onto $\mathcal{S}$, i.e., $\proj_\mathcal{S}(x) \coloneqq \argmin_{v \in \mathcal{S}} \norm{v - x}_2$. 

\section{Preliminaries}\label{sect:prob-form}

\subsection{Problem Formulation}

Throughout this paper, we consider an $N$-player game $\mathcal{G}$ consisting of a finite set of participants $\mathcal{N} \coloneqq \{1, \ldots, N\}$. 
Each player $i$ selects its action $x^i$ from the individual strategy space $\mathcal{X}^i \subseteq \rset{n^i}{}$. 
Aggregating over all players, we use $\mathcal{X} \coloneqq \prod_{i \in \playerN} \mathcal{X}^i \subseteq \rset{n}{}$ and $x = [x^i]_{i \in \playerN} \in \mathcal{X}^i$ to represent the strategy space and action profile of the whole game, where $n \coloneqq \sum_{i \in \playerN} n^i$. 
For later notational simplicity, we write the stack of the actions of other players as $x^{-i} \coloneqq [x^j]_{j \in \mathcal{N}^{-i}} \in \mathcal{X}^{-i} \subseteq \rset{n^{-i}}{}$, 
where 
$\mathcal{N}^{-i} \coloneqq \playerN \backslash \{i\}$, 
$\mathcal{X}^{-i} \coloneqq \prod_{j \in \mathcal{N}^{-i}} \mathcal{X}^j$, 
and $n^{-i} \coloneqq \sum_{j \in \mathcal{N}^{-i}}n^j$. 
Also, we write $x \coloneqq [x^i; x^{-i}]$, regardless of the indices among players. 
Under a specific action profile $x \in \mathcal{X}$, an objective value $J^i(x^i; x^{-i})$ will be induced to each player $i$, which determines player $i$'s preference for different actions. 
From the perspective of each self-interested player $i$, it aims to solve the following local optimization problem: 
\begin{align}
\minimize_{x^i \in \mathcal{X}^i} J^i(x^i; x^{-i}). 
\end{align}
Although the local optimization problem is restricted to the feasible set $\mathcal{X}^i$, each player $i$ is assumed to be allowed to take its actions from the action space $\mathcal{X}^i_a$ and receives the corresponding objective values, where the action space $\mathcal{X}^i_a$ can be slightly larger than the strategy space $\mathcal{X}^i$, i.e. $\mathcal{X}^i \subseteq \mathcal{X}^i_a$. 
Stacking the individual action spaces yields the group action space $\mathcal{X}_a \coloneqq \prod_{i \in \mathcal{N}} \mathcal{X}^i_a$.
We make the following blanket assumptions regarding the regularity of the objective function $J^i$, the strategy space $\mathcal{X}^i$, and the action space $\mathcal{X}^i_a$, which are typical in the literature of zeroth-order learning or optimization. 
\begin{assumption}\label{asp:objt-set}
For each player $i$, the local objective function $J^i$ is continuously differentiable in $x$ over the action space $\mathcal{X}_a$.
Moreover, its individual strategy space $\mathcal{X}^i$ is compact and convex and the action space $\mathcal{X}^i_a$ is compact and has a non-empty interior. 
\end{assumption}

\subsection{Game Regularization and Solution Concepts}

To facilitate the later discussion of the regularity assumptions, we introduce the so-called pseudogradient operator $F: \mathcal{X}_a \to \rset{n}{}$ which characterizes the first-order information of $\mathcal{G}$ and is defined as the direct product of partial gradients:
\begin{align}
F(x) \coloneqq \prod_{i \in \mathcal{N}} [\nabla_{x^i} J^i(x^i; x^{-i})]. 
\end{align}
We first make the following assumption concerning the Lipschitz property of $F$ to contend with the absence of first-order information in the zeroth-order setup. 
\begin{assumption}\label{asp:lipschitz}
The pseudogradient $F$ is Lipschitz continuous on $\mathcal{X}_a$ with the constant $L$, i.e., for any $x, x^\prime \in \mathcal{X}_a$, we have
\begin{align}
    \norm{F(x) - F(x^\prime)}_* \leq L\norm{x - x^\prime}. 
\end{align}
Moreover, for each $i$, the operator $\nabla_{x^i}J^i: \mathcal{X}_a \to \rset{n_i}{}$ enjoys a smaller Lipschitz constant $L^i$.
\end{assumption}

Nash equilibrium (NE) is a standard solution concept for non-cooperative games, which is defined as a decision profile resilient to arbitrary unilateral deviations. 
Formally, a decision profile $x_* = [x^{i}_*; x^{-i}_*] \in \mathcal{X}$ is an NE of $\mathcal{G}$ if 
\begin{align}\label{eq:ne}
J^i(x^i_*; x^{-i}_*) \leq J^i(x^i; x^{-i}_*), \; \forall x^i \in \mathcal{X}^i, \; \text{for all } i \in \mathcal{N}. 
\end{align} 
In the sequel, we focus on a more relaxed solution concept than NEs, called critical points (CPs) \cite[Sec.~2.2]{mertikopoulos2022learning}, whose definition is formulated using (Stampacchia) variational inequalities (VIs) and the pseudogradient $F$ defined above. 
\begin{envdef}\label{def:cps} (Critical Points)
A decision profile $x_* \in \mathcal{X}$ is a critical point of the non-cooperative game $\mathcal{G}$ if it is a solution to the associated VI, i.e., 
\begin{align}\label{eq:cps}
\langle F(x_*), x - x_*\rangle \geq 0, \; \forall x \in \mathcal{X}. 
\end{align}
\end{envdef}
We make the blanket assumption that the games discussed in this work admit at least one critical point inside $\mathcal{X}$. 
A well-known result is that CPs coincide with NEs when $J^i$ is convex and continuously differentiable in $x^i$ for all $i$ \cite[Sec.~1.4.2]{facchinei2003finite}. 
Let $\mathcal{X}_*$ denote the set of critical points for $\mathcal{G}$.
Another commonly-used concept in the literature to measure the inaccuracy of a candidate solution $x_\star = [x^{i}_\star; x^{-i}_\star] \in \mathcal{X}$ in a variational form is the following merit function: 
\begin{align}\label{eq:merit-func}
    \text{Err}_{\mathcal{X}}(x_\star) \coloneqq \max_{x \in \mathcal{X}} \langle F(x), x_\star - x\rangle. 
\end{align}
Much of the literature on continuous games examines games that possess (strongly) monotone pseudogradient, which are referred to as (strongly) monotone games \cite{scutari2010convex}. 
Note that for monotone games, $\text{Err}_{\mathcal{X}}(x_*) \geq 0$ for all $x_* \in \mathcal{X}$ and $\text{Err}_{\mathcal{X}}(x_*) = 0$ if and only if $x_*$ is a solution of the VI under study. 
In this work, we shift the scope and proceed with several different classes of games \cite[Def.~2.3.9]{facchinei2003finite}, the regularities of which are described below. 
\begin{envdef}\label{def:regu}
The game $\mathcal{G}$ is 
\begin{outline}[enumerate]
\1 pseudo-monotone if for any action profiles $x, y \in \mathcal{X}$, 
$\langle F(y), x - y\rangle \geq 0 \implies \langle F(x), x - y\rangle \geq 0$;
\1 pseudo-monotone plus if it is pseudo-monotone and for any action profiles $x, y \in \mathcal{X}$, $\langle F(y), x - y\rangle \geq 0 \;\text{and}\; \langle F(x), x - y\rangle = 0 \implies F(x) = F(y)$; 
\1 strictly pseudo-monotone if for any action profiles $x, y \in \mathcal{X}$, 
$\langle F(y), x - y\rangle \geq 0 \implies \langle F(x), x - y\rangle \geq 0$ with equality if and only if $x = y$;
\1 $\mu$-strongly pseudo-monotone if for any action profiles $x, y \in \mathcal{X}$, $\langle F(y), x - y\rangle \geq 0 \implies \langle F(x), x - y\rangle \geq \mu\norm{x - y}^2$;
\1 strictly coherent if for any critical point $x_* \in \mathcal{X}_*$, $\langle F(x), x - x_*\rangle > 0$, $\forall x \in \mathcal{X} \backslash \mathcal{X}_*$, where $\mathcal{X}_*$ is the set of critical points; 
\1 with pseudoconvex potential $\Phi$ if $F$ is the gradient of a pseudoconvex function $\Phi$, i.e., $F=\nabla\Phi$ and for any $x, y \in \mathcal{X}$, $\langle \nabla \Phi(x), y-x\rangle \geq 0 \implies \Phi(y) \geq \Phi(x)$.  
\end{outline}
\end{envdef}
A brief remark about $(\romannum{6})$ is that the differentiable potential $\Phi$ is pseudoconvex on $\mathcal{X}$ if and only if $\nabla \Phi$ is pseudomonotone on $\mathcal{X}$ \cite[Thm.~3.1]{karamardian1976complementarity}. 

\subsection{Mirror Maps between Banach Spaces}
To streamline the flow of our paper, in this subsection, we discuss our motivation to leverage mirror maps and introduce the basic concepts related to them. 
When confronted with an optimization problem, if its objective function and the associated constraint set are well-behaved in a Euclidean space, we can leverage the projected gradient descent method and $\ell_2$ norm usually serves as an efficient distance metric for measurement. 
Nevertheless, for more general situations, more general Banach spaces $\mathcal{B}$'s may turn out to be the desirable ambient spaces, the associated norm metrics of which are not derived from inner products. 
For example, for mixed-strategy games, the local feasible set for each player is described by a probability simplex, and $\ell_1$ norm is conventionally utilized in this case. 
Unlike Hilbert spaces, whose dual spaces are isometric to themselves, directly implementing the gradient descent in a (primal) Banach space $\mathcal{B}$ no longer makes sense, considering that the gradient of the objective function sits inside its dual $\mathcal{B}^*$. 
Fortunately, the idea of mirror descents and mirror maps, first introduced in \cite{nemirovskij1983problem}, can handle this inconsistency by mapping the point in $\mathcal{B}$ to $\mathcal{B}^*$, performing the gradient update in $\mathcal{B}^*$, and finally mapping the updated point back to $\mathcal{B}$. 
For more detailed discussions and examples, the interested reader is referred to \cite[Ch.~4]{bubeck2014theory}\cite{juditsky2022unifying}. 

We start by introducing the definition of distance-generating functions (DGFs) in this work. 
With a slight abuse of notation, a function $\psi: \dom{\psi} \to \rset{}{}$ with $\dom{\psi} \subseteq \rset{n}{}$ is a DGF in the ambient space $\mathcal{B} = (\rset{n}{}, \norm{\cdot})$ if it satisfies the following three properties: 
(\romannum{1}) $\psi$ is differentiable and $\tilde{\mu}$-strongly convex for some constant $\tilde{\mu} > 0$; 
(\romannum{2}) $\nabla \psi(\dom{\psi}) = \rset{n}{}$; 
(\romannum{3}) $\cl{\dom{\psi}} \supseteq \mathcal{X}$ and $\lim_{x \to \partial(\dom{\psi})}\norm{\nabla\psi(x)}_* = +\infty$, i.e., its gradient diverges on the boundary of $\dom{\psi}$. 
With DGFs in hand, the mirror map $\nabla \psi^*$ from $\mathcal{B}^*$ to $\mathcal{B}$ can be defined as: 
\begin{align}\label{eq:mirror-map}
    \nabla \psi^*(z) = \argmax_{x \in \mathcal{X}} \{\langle z, x\rangle - \psi(x)\}, 
\end{align}
where $\mathcal{X}$ denotes the primal feasible set;  
$\psi^*$ from $\mathcal{B}^*$ to $\rset{}{}$ is the convex conjugate of $\psi$, i.e., $\psi^*(z) = \max_{x \in \mathcal{X}} \{\langle z,x \rangle - \psi(x)\}$; 
the expression $\nabla \psi^*$ denotes the subgradient of $\psi^*$; 
and \eqref{eq:mirror-map} follows from Danskin's Theorem. 
The pseudo-distance induced by $\psi$ is the so-called Bregman divergence: 
\begin{align}
D(p, x) = \psi(p) - \psi(x) - \langle \nabla \psi(x), p - x\rangle, \forall p, x \in \dom{\psi}. 
\end{align}
Due to the $\tilde{\mu}$-strong convexity of $\psi$, Bregman divergence can be bounded below by $D(p, x) \geq \tilde{\mu}/2\norm{p - x}^2$. 
Analogous to the projected gradient descent with the $\ell_2$ norm, the prox-mapping $P_{x, \mathcal{X}}: \mathcal{B}^* \to \dom{\psi} \cap \mathcal{X}$ for some fixed $x \in \dom{\psi} \cap \mathcal{X}$ is induced through the Bregman divergence as: 
\begin{align}\label{eq:prox-map}
P_{x, \mathcal{X}}(y) = \argmin_{x^\prime \in \mathcal{X}}\{ \langle y, x - x^\prime\rangle + D(x^\prime, x)\}. 
\end{align}
We refer the readers to Lemma~\ref{le:distgen-func} for the properties of mirror maps and prox-mappings. 
Note that, for a Lipschitz-continuous and convex function $f$, it can be minimized via the mirror descent iteration given by $x_{k+1} \in \nabla \psi^*(\nabla\psi(x_k) - \gamma_k\nabla f(x_k)) = P_{x_k, \mathcal{X}}(-\gamma_k\nabla f(x_k))$, where $(\gamma_k)_{k \in \nset{}{}}$ is a proper sequence of step sizes \cite[Sec.~4.2]{bubeck2014theory}\cite{juditsky2022unifying}. 

Although $D(p, x_k) \to 0$ implies $x_k \to p$ by the fact that $D(p, x) \geq \tilde{\mu}/2\norm{p - x}^2$, it does not come naturally that the converse holds by the construction above, leaving the level sets of $D(p, \cdot)$ short of indicating neighborhoods of $p$ \cite{mertikopoulos2018optimistic}. 
For posterity, the following mild assumption is made regarding the DGF chosen and the corresponding Bregman divergence. 
\begin{assumption}\label{asp:breg-recip}(Bregman Reciprocity)
The DGF $\psi$ chosen satisfies that when $x_k \to p$, we have $D(p, x_k) \to 0$.
\end{assumption}

\section{Two Single-Call Extra-Gradient Algorithms and the Associated Zeroth Order Variants}\label{sect:algms}

\subsection{Extra-gradient Family}

To procure critical points in non-cooperative games as given in \eqref{eq:cps}, we consider the generalized extra-gradient family of algorithms. 
In this family, at each iteration $k$, the algorithms keep updating two states, i.e., the base state $X_k$ and the leading state $X_{k+1/2}$. 
Given a step-size sequence $(\gamma_k)_{k \in \nset{}{}}$ and two sequences $(\tilde{F}_{k})_{k \in \nset{}{}}$ and $(\tilde{F}_{k+1/2})_{k \in \nset{}{}}$ which are related to the first-order/pseudogradient information regarding the base and leading states respectively, the extra-gradient schemes comprise of the following two steps: 
\begin{align}\label{eq:egrad}
\begin{split}
X_{k+1/2} = P_{X_{k}, \mathcal{X}_1}(-\gamma_k\tilde{F}_{k}), \;
X_{k+1} = P_{X_{k}, \mathcal{X}_2}(-\gamma_k\tilde{F}_{k+1/2}). 
\end{split}
\end{align}
To illustrate some feasible choices of the first-order sequences and the sets $\mathcal{X}_1$ and $\mathcal{X}_2$, we provide the following few concrete examples in literature. 
Let $\xi_k$ and $\xi_{k+1/2}$ denote some unbiased random noise with their second-order moments bounded. 

\begin{example}
(Stochastic Mirror Prox (SMP) \cite{juditsky2011solving}) Consider a general ambient space $(\rset{n}{}, \norm{\cdot})$. 
Setting $\tilde{F}_{k} = F(X_k) + \xi_k$, $\tilde{F}_{k+1/2} = F(X_{k+1/2}) + \xi_{k+1/2}$, and $\mathcal{X}_1 = \mathcal{X}_2 = \mathcal{X}$ yields the stochastic mirror prox algorithm: 
$X_{k+1/2} = P_{X_k, \mathcal{X}}(- \gamma_k(F(X_k) + \xi_k))$, $X_{k+1} = P_{X_k, \mathcal{X}}(- \gamma_k(F(X_{k+1/2}) + \xi_{k+1/2}))$.
\end{example}

\begin{example}\label{exp:omd}
(Optimistic mirror descent (OMD) \cite{gidel2018variational, azizian2021last}) Given a general ambient space $(\rset{n}{}, \norm{\cdot})$, let $\tilde{F}_{k} = F(X_{k-1/2}) + \xi_{k-1/2}$, $\tilde{F}_{k+1/2} = F(X_{k+1/2}) + \xi_{k+1/2}$, $\mathcal{X}_1 = \rset{n}{}$, and $\mathcal{X}_1 = \mathcal{X}_2 = \mathcal{X}$, respectively. 
Then we obtain the following updates:
$X_{k+1/2} = P_{X_k, \mathcal{X}}(- \gamma_k(F(X_{k-1/2}) + \xi_{k-1/2}))$, $X_{k+1} = P_{X_k, \mathcal{X}}(- \gamma_k(F(X_{k+1/2}) + \xi_{k+1/2}))$.
\end{example}

\begin{example}\label{exp:rgd}
(Reflected gradient descent (RGD) \cite{malitsky2015projected, cui2016analysis}) Consider the Euclidean ambient space $(\rset{n}{}, \norm{\cdot}_2)$. 
By letting $\tilde{F}_{k} = 1/\gamma_k(X_{k-1} - X_{k})$, $\tilde{F}_{k+1/2} = F(X_{k+1/2}) + \xi_{k+1/2}$, $\mathcal{X}_1 = \rset{n}{}$, and $\mathcal{X}_2 = \mathcal{X}$, we can ground \eqref{eq:egrad} to the explicit iterations as follows: 
$X_{k+1/2} = 2X_k - X_{k-1}$, $X_{k+1} = \proj_{\mathcal{X}}[X_k - \gamma_k(F(X_{k+1/2}) + \xi_{k+1/2})]$. 
\end{example}

With its deterministic counterpart first introduced in \cite{nemirovski2004prox}, the SMP is one of the most widely studied extra-gradient methods. 
Nevertheless, it requires two queries to the first-order oracle and two prox-mappings per iteration, which makes its implementation costly. 
The RGD leverages the reflection of $X_{k-1}$ in $X_k$ rather than evaluating the gradient at $X_k$, which only requires a single Euclidean projection and single oracle call per iteration. 
Yet extending this strategy beyond the Euclidean scope would require extra regularity assumptions on the DGF to use. 
The implementation of OMD involves two prox-mappings but a single oracle call by updating the leading state using the first-order information from the last iteration. 
As a prelude to the next two subsections, we note here that our zeroth-order learning algorithms leverage RGD and OMD as the backbone and the first-order information to estimate at each iteration $k$ is the pseudogradient queried at the leading state $X_{k+1/2}$.

\subsection{Residual Pseudogradient Estimate}

The first-order information at the query point $X_{k+1/2}$, i.e., $F(X_{k+1/2})$, plays an essential role in the OMD and RGD to procure critical points. 
Yet, all too often, it is unavailable and each player $i$ needs to estimate its own partial gradient information $\nabla_{x^i} J^i(X_{k+1/2})$ based on the observed objective values. 
To enable bandit learning, we let each player randomly sample a query direction $u_{k}^{i}$ from the unit sphere in the $n^i$-dimensional space, with the query radius $\delta_k$ at the $k$-th iteration. 
Let $u_k \coloneqq [u_k^i]_{i \in \playerN}$.
For player $i$, the partial gradient estimate candidate at the $k$-th iteration is given by 
\begin{align}\label{eq:grad-hp-est}
\frac{n^i}{\delta_k}\Big(J^i(X_{k+1/2} + \delta_ku_{k}) - J^i(X_{k-1/2} + \delta_{k-1}u_{k-1})\Big)u^i_{k}. 
\end{align}
Stacking \eqref{eq:grad-hp-est} across this group of players gives us the so-called residual pseudogradient estimate. 
Furthermore, to ensure the local feasibility after perturbing the query point, we assume the existence of a closed ball with center $p_i$ and radius $r_i$ within each $\mathcal{X}^i_a$ and apply the following feasibility adjustment steps:
\begin{align*}
\hat{X}^i_{k+1/2} &= X^i_{k+1/2} + \delta_k(u^i_{k} - \frac{1}{r^i}(X^i_{k+1/2} - p^i)) \\
& = (1 - \frac{\delta_k}{r^i})X^i_{k+1/2} + \frac{\delta_k}{r^i}(p^i + r^iu^i_{k}) = \bar{X}^i_{k+1/2} + \delta_ku^i_{k},
\end{align*}
where $\bar{X}^i_{k+1/2} = (1 - \frac{\delta_k}{r^i})X^i_{k+1/2} + \frac{\delta_k}{r^i}p^i$ and we let $\delta_k < r^i$ for all $k$ to make sure that the above is a feasible convex combination of two points inside $\mathcal{X}^i_a$. 
Substituting the perturbed action in \eqref{eq:grad-hp-est} with the adjusted version, we obtain the following estimate
\begin{align}\label{eq:grad-hp-est-adj}
\tag{RPG}
G^i_{k} \coloneqq \frac{n^i}{\delta_k}\Big(J^i(\hat{X}_{k+1/2}) - J^i(\hat{X}_{k-1/2})\Big)u^i_k, 
\end{align}
where $\hat{X}_{k+1/2} \coloneqq [\hat{X}^i_{k+1/2}]_{i \in \mathcal{N}}$ for each $k \in \nset{}{+}$. 
To later characterize the residual pseudogradient estimate \eqref{eq:grad-hp-est-adj}, we construct a filtration as follows: for each $k \in \nset{}{+}$, let $\mathcal{F}_k \coloneqq \sigma\{X_0, u_1, \ldots, u_{k-1}\}$. 
Note that while $X_{k+1/2}, \bar{X}_{k+1/2} \in \mathcal{F}_k$, $\hat{X}_{k+1/2} \notin \mathcal{F}_k$. 
A widely used function in the literature of zeroth-order learning is the $\delta$-smoothed objective function, defined as:
\begin{align}
\tilde{J}^i_{\delta}(x^i; x^{-i}) \coloneqq \frac{1}{\mathbb{V}^i} \int_{\delta \mathbb{S}_{-i}}\int_{\delta \mathbb{B}_i} J^i(x^i + \Tilde{\tau}^i; x^{-i} + \tau^{-i})d\Tilde{\tau}^i d\tau^{-i},
\end{align}
where $\mathbb{S}_{-i} \coloneqq \prod_{j \in \mathcal{N}^{-i}} \mathbb{S}_j \subseteq \rset{n^{-i}}{}$ with each $\mathbb{S}_j$ representing a unit sphere centered at the origin within $\rset{n_j}{}$; $\mathbb{B}_i$ denotes the unit ball centered at the origin inside $\rset{n_i}{}$; $\mathbb{V}^i \coloneqq \vol{\delta \mathbb{B}_i} \cdot \vol{\delta \mathbb{S}_{-i}}$ is the volume constant. 
Note that $\tilde{J}^i_{\delta}(x^i; x^{-i})$ can be interpreted as the mean value of the local objective function $J^i$ within the region $\delta\mathbb{B}_i \times \delta\mathbb{S}_{-i}$. 
Formally, we have the following lemma to relate \eqref{eq:grad-hp-est-adj} to the function $\tilde{J}^i_{\delta}$. 
\begin{lemma}\label{le:unbiased}
Suppose that Assumption~\ref{asp:objt-set} holds. 
Then at each iteration $k \in \nset{}{+}$, $\nabla_{x^i} \tilde{J}^i_{\delta_k}(\bar{X}_{k+1/2})$ is a version of the conditional expectation $\expt{}{G^i_k \mid \mathcal{F}_k}$, i.e., 
\begin{align}
\nabla_{x^i} \tilde{J}^i_{\delta_k}(\bar{X}_{k+1/2}) = \expt{}{G^i_k \mid \mathcal{F}_k} \;\text{a.s.},\; \forall i \in \mathcal{N}. 
\end{align}
\begin{proof}
See Appendix~\ref{appd:rpg}.
\end{proof}
\end{lemma}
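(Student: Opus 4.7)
The plan is to split $G^i_k$ into a ``current'' piece and a ``previous'' piece, handle the previous piece via a symmetry argument, and then reduce the current piece to the classical single-query zeroth-order identity. Concretely, write $G^i_k = G^{i,+}_k - G^{i,-}_k$, where $G^{i,+}_k := (n^i/\delta_k) J^i(\hat{X}_{k+1/2}) u^i_k$ and $G^{i,-}_k := (n^i/\delta_k) J^i(\hat{X}_{k-1/2}) u^i_k$.

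For $G^{i,-}_k$, note that $\hat{X}_{k-1/2} = \bar{X}_{k-1/2} + \delta_{k-1} u_{k-1}$ is $\mathcal{F}_k$-measurable by construction of the filtration, so $J^i(\hat{X}_{k-1/2})$ is a scalar factor that can be pulled out of $\mathbb{E}[\,\cdot\mid\mathcal{F}_k]$. Since $u^i_k$ is drawn uniformly from $\mathbb{S}_i$ independently of $\mathcal{F}_k$, it has zero mean by symmetry, and therefore $\mathbb{E}[G^{i,-}_k\mid\mathcal{F}_k]=0$ almost surely. This is the point of the residual scheme: subtracting the previous-iterate value eliminates nothing in the conditional expectation, but it controls variance.

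For $G^{i,+}_k$, I would exploit the fact that $\bar{X}_{k+1/2}$ is $\mathcal{F}_k$-measurable and that $\{u^j_k\}_{j\in\mathcal{N}}$ are mutually independent and independent of $\mathcal{F}_k$, so by iterating the expectation first over $u^{-i}_k$ and then over $u^i_k$ I get
\begin{align*}
\mathbb{E}[G^{i,+}_k\mid\mathcal{F}_k] = \mathbb{E}_{u^i_k}\!\left[\tfrac{n^i}{\delta_k}\, g_k(\bar{X}^i_{k+1/2}+\delta_k u^i_k)\, u^i_k\right],
\end{align*}
where $g_k(y) := \mathbb{E}_{u^{-i}_k}[J^i(y;\,\bar{X}^{-i}_{k+1/2}+\delta_k u^{-i}_k)]$ absorbs the other players' spherical perturbations. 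Now invoke the classical identity $\mathbb{E}_{u}[(n/\delta)\,g(x+\delta u)\,u] = \nabla_x\,\mathbb{E}_v[g(x+\delta v)]$ (with $u$ uniform on the unit sphere and $v$ uniform on the unit ball), which is a one-line consequence of the divergence theorem together with the ratio $\mathrm{vol}(\mathbb{S}^{n-1})/\mathrm{vol}(\mathbb{B}^n)=n$. Applied to $g_k$, this converts the sphere integral in the $i$-th block into a ball integral and pulls $\nabla_{x^i}$ outside, while the integrations over $u^j_k$ for $j\ne i$ remain spherical. Matching terms with the definition of $\tilde{J}^i_{\delta_k}$ gives exactly $\nabla_{x^i}\tilde{J}^i_{\delta_k}(\bar{X}_{k+1/2})$.

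The main obstacle is conceptual rather than technical: one has to see that the asymmetric ball-versus-sphere structure in the definition of $\tilde{J}^i_{\delta_k}$ is not arbitrary but is precisely what the one-point estimator produces, since Stokes' theorem turns a sphere integral into a ball integral only in the coordinate direction along which we differentiate, namely player $i$'s own block. The remaining formalities---justifying the interchange of gradient and expectation, and the Fubini step separating the $u^i_k$ and $u^{-i}_k$ integrals---are routine under Assumption~\ref{asp:objt-set}, which provides $C^1$ regularity of $J^i$ on the compact action set $\mathcal{X}_a$, and under the feasibility adjustment $\delta_k<r^i$ that keeps $\hat{X}_{k+1/2}$ inside $\mathcal{X}_a$.
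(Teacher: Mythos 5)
Your proposal is correct and follows essentially the same route as the paper's proof: the residual term $J^i(\hat{X}_{k-1/2})$ drops out because it is $\mathcal{F}_k$-measurable and $u^i_k$ is independent with zero mean, and the remaining one-point term is converted into $\nabla_{x^i}\tilde{J}^i_{\delta_k}(\bar{X}_{k+1/2})$ via the Stokes/divergence identity together with the volume ratio $n^i/\delta_k$, exactly as in Appendix~B (the paper merely runs the Stokes step in the opposite direction, starting from $\nabla_{x^i}\tilde{J}^i_{\delta_k}$, and additionally spells out the verification of the conditional-expectation definition on events $E\in\mathcal{F}_k$ via Fubini and change of variables, which you correctly flag as routine under Assumption~\ref{asp:objt-set}).
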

In other words, even though it is straightforward to observe that $G^i_k$ is a biased estimate of the true pseudogradient at the point $\bar{X}_{k+1/2}$, conditioning on $\mathcal{F}_k$, $G^i_k$ is an unbiased estimate of the $\delta$-smoothed function $\tilde{J}^i_{\delta_k}$ at $\bar{X}_{k+1/2}$. 
With the above observation in hand, the error induced by the estimate \eqref{eq:grad-hp-est-adj} can be decomposed into systematic error $B^i_k$ and stochastic error $V^i_k$, i.e., 
\begin{align}\label{eq:est-decomp}
\begin{split}
& G^i_k = \nabla_{x^i}J^i(X_{k+1/2}) + \\
& \underbrace{\big(G^i_k - \nabla_{x^i} \tilde{J}^i_{\delta_k}(\bar{X}_{k+1/2})\big)}_{V^i_k} 
+ \underbrace{\big(\nabla_{x^i} \tilde{J}^i_{\delta_k}(\bar{X}_{k+1/2}) - \nabla_{x^i}J^i(X_{k+1/2})\big)}_{B^i_k}. 
\end{split}
\end{align}
Recalling the construction of the filtration $(\mathcal{F}_k)_{k \in \nset{}{+}}$, the systematic error $B^i_k$ is $\mathcal{F}_k$-measurable while the stochastic error $V^i_{k} \in \mathcal{F}_{k+1}$ is not $\mathcal{F}_k$-measurable. 
Let $G_k \coloneqq [G^i_k]_{i \in \playerN}$, and similarly for $B_k$, $V_k$, and other variables. 
The codomains of the random variables $B_k$ and $V_{k}$ are characterized by the lemma below. 

\begin{lemma}\label{le:codom-err}
Suppose that Assumption~\ref{asp:objt-set} holds.
Then for each iteration $k$, $\norm{B_k}_* \leq \alpha_B\delta_k$ for some constant $\alpha_B$ depending on the Lipschitz constant $\tilde{L}$ and the geometry of $\mathcal{X}_a$. 
In addition, $\norm{V_k}_* \leq \alpha_V/(\delta_k)^2$ for some constant $\alpha_V$ depending on the objectives $J^i$'s and the geometry of $\mathcal{X}_a$. 
\end{lemma}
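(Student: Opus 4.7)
I plan to bound the two error components separately. For the systematic error $B^i_k$, I would add and subtract $\nabla_{x^i}J^i(\bar{X}_{k+1/2})$ to split it into a pure smoothing error and a pure anchor-shift error:
\begin{align*}
B^i_k
&= \bigl[\nabla_{x^i}\tilde{J}^i_{\delta_k}(\bar{X}_{k+1/2}) - \nabla_{x^i}J^i(\bar{X}_{k+1/2})\bigr] \\
&\quad + \bigl[\nabla_{x^i}J^i(\bar{X}_{k+1/2}) - \nabla_{x^i}J^i(X_{k+1/2})\bigr].
\end{align*}
The first bracket is treated by rewriting $\nabla_{x^i}\tilde{J}^i_{\delta_k}$ as the average of $\nabla_{x^i}J^i$ over $\delta_k\mathbb{B}_i \times \delta_k\mathbb{S}_{-i}$ (differentiation under the integral being legitimate by Assumption~\ref{asp:objt-set}), then applying Assumption~\ref{asp:lipschitz} pointwise inside the integrand; since every perturbation in the integration domain has norm of order $\delta_k$, the bracket is $O(\delta_k)$. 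The second bracket follows directly from the explicit formula $\bar{X}^i_{k+1/2} - X^i_{k+1/2} = (\delta_k/r^i)(p^i - X^i_{k+1/2})$, combined with Lipschitzness of $\nabla_{x^i}J^i$ and the observation that $\norm{p^i - X^i_{k+1/2}}$ is controlled by the diameter of the compact set $\mathcal{X}^i_a$. Stacking over $i$ gives $\norm{B_k}_* \leq \alpha_B\delta_k$, where $\alpha_B$ absorbs $N$, the $L^i$, the $r^i$, and the $\text{diam}(\mathcal{X}^i_a)$.

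For the stochastic error, I would use the crude triangle inequality
\begin{align*}
\norm{V^i_k}_* \leq \norm{G^i_k}_* + \norm{\nabla_{x^i}\tilde{J}^i_{\delta_k}(\bar{X}_{k+1/2})}_*.
\end{align*}
The second term is bounded by $\sup_{x\in\mathcal{X}_a}\norm{\nabla_{x^i}J^i(x)}_*$, which is finite by continuity of $\nabla_{x^i}J^i$ on the compact set $\mathcal{X}_a$ (averaging cannot inflate the sup norm). For the first term, expanding the definition of $G^i_k$ and invoking (i) $\abs{J^i(\hat{X}_{k+1/2}) - J^i(\hat{X}_{k-1/2})} \leq 2\max_{x\in\mathcal{X}_a}\abs{J^i(x)}$ by continuity on $\mathcal{X}_a$, and (ii) finite-dimensional norm equivalence to convert the Euclidean bound $\norm{u^i_k}_2 = 1$ into a uniform bound on $\norm{u^i_k}_*$, one obtains a bound of order $1/\delta_k$. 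Collecting the constants into $\alpha_V$, and noting that under the tuning $\delta_k \leq 1$ we will adopt the majorant $1/\delta_k \leq 1/\delta_k^2$ holds, delivers the claimed $\norm{V_k}_* \leq \alpha_V/(\delta_k)^2$.

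\textbf{Main obstacle.} The bookkeeping is largely routine; the real care points are two. First, because the ambient norm is not necessarily Euclidean, the step $\norm{u^i_k}_* \leq C$ cannot be skipped — it requires an explicit equivalence constant that must then be absorbed into $\alpha_V$, and similarly for the smoothing-error bound where the perturbation norm $\norm{(\tilde{\tau}^i,\tau^{-i})}$ must be controlled via equivalence. Second, justifying the gradient-integral interchange in the smoothing step requires an appeal to dominated convergence, supported by Assumption~\ref{asp:objt-set} and the compactness of the integration region; this is standard but worth spelling out. Beyond these, no nontrivial machinery is needed.
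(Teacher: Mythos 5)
Your proposal is correct and follows essentially the same route as the paper's proof: the same add-and-subtract decomposition of $B^i_k$ into a smoothing error and an anchor-shift error (each handled via differentiation under the integral plus Lipschitz continuity of $\nabla_{x^i}J^i$), and the same triangle-inequality split of $V^i_k$ with compactness of $\mathcal{X}_a$ supplying the uniform bounds. The only cosmetic difference is that you bound $\norm{\nabla_{x^i}\tilde{J}^i_{\delta_k}}_*$ by a constant via Jensen's inequality while the paper uses the sphere-integral representation to get an $O(1/\delta_k)$ bound — both suffice, and your explicit remark that the resulting $O(1/\delta_k)$ bound on $\norm{V_k}_*$ is majorized by $\alpha_V/\delta_k^2$ for $\delta_k\leq 1$ actually addresses a mismatch the paper's own proof leaves implicit.
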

\begin{proof}
See Appendix~\ref{appd:rpg}.
\end{proof}

Such conclusions for the codomains of the errors $B^i_k$ and $V^i_k$ are typical, especially for the single-point zeroth-order methods \cite{bravo2018bandit, tatarenko2022rate, tatarenko2020bandit}. 
As a prelude, in the next subsection, we will present that the codomain of $\norm{V_k}^2_*$ can be bounded by employing proper sequences of step size and query radius, an outcome of the appealing feature of the RPG. 

\subsection{Zeroth-Order Learning with Single-Call Extra-Gradient methods}

We now present two zeroth-order learning algorithms by incorporating the RPG into the extra-gradient schemes OMD and RGD. 
Before proceeding to the algorithm details, we introduce some notations: 
let each player $i$ be endowed with a local DGF $\psi^i$; 
for this group of players, we in addition denote $\psi(x) \coloneqq \sum_{i \in \mathcal{N}} \psi^i(x^i)$; 
$\hat{J}^i_k$ denotes the realized objective function value of player $i$ at the $k$-th iteration. 

\subsubsection{Optimistic Mirror Descent Method}
Combining the learning scheme of OMD in Example~\ref{exp:omd} with the RPG machinery, we readily obtain the dual vectors leveraged at each iteration $k$, i.e., $\tilde{F}_{k+1/2} = [G^i_k]_{i \in \playerN}$ and $\tilde{F}_{k} = [G^i_{k-1}]_{i \in \playerN}$, respectively. 
Separating the OMD updates player-wisely gives the associated payoff-based learning algorithms in Algorithm~\ref{alg:optm-mrdesc}. 
Note that in Algorithm~\ref{alg:optm-mrdesc}, the action space $\mathcal{X}_a$ coincides with the strategy space $\mathcal{X}$. 

\begin{algorithm}
\SetAlgoLined
\caption{Zeroth-Order Learning of CPs Based on Optimistic Mirror Descent (Player $i$)}
\label{alg:optm-mrdesc}
\textbf{Initialize:} $X^i_0 = X^i_{1/2} = X^i_1 \in \mathcal{X}^i \cap \dom{\psi^i}$ arbitrarily; 
$\hat{J}^i_0 = J^i(X^i_{1/2}; X^{-i}_{1/2})$;
$G^i_{0} = \bzero_{n^i}$; 
$p^i, r^i$ to be the center and radius of an arbitrary ball within the set $\mathcal{X}^i$\;
\textbf{At the $k$-th iteration ($k \in \nset{}{+}$)}: \\
\qquad $X^i_{k+1/2} \leftarrow P_{X^i_k, \mathcal{X}^i}(-\gamma_kG^i_{k-1})$\;
\qquad Randomly sample the direction $u^i_k$ from $\mathbb{S}^{n^i}$\; 
\qquad $\hat{X}^i_{k+1/2} \leftarrow (1 - \frac{\delta_k}{r^i})X^i_{k+1/2} + \frac{\delta_k}{r^i}(p^i + r^i u^i_k)$ \;
\qquad Take action $\hat{X}^i_{k+1/2}$ and observe the realized objective function value $\hat{J}^i_{k} \coloneqq J^i(\hat{X}^i_{k+1/2}; \hat{X}^{-i}_{k+1/2})$\;
\qquad $G^i_k \leftarrow \frac{n^i}{\delta_k}(\hat{J}^i_k - \hat{J}^i_{k-1})u^i_k$\; 
\qquad $X^i_{k+1} \leftarrow P_{X^i_k, \mathcal{X}^i}(-\gamma_kG^i_{k})$\;
\textbf{Return:} $\{\hat{X}^i_{k+1/2}\}_{i \in \playerN}$.
\end{algorithm}

\subsubsection{Reflected Mirror Descent}

Extending the Euclidean setup for RGD in Example~\ref{exp:rgd}, we consider applying the reflected strategy in a space with general norm $(\rset{n}{}, \norm{\cdot})$. 
As a result, the vector difference $1/\gamma_k(X_{k-1} - X_k)$ in the primal space no longer works, and it should be replaced by some reflected proxy in the dual space. 
A potential candidate is $\tilde{F}_k = 1/\gamma_k(\nabla \psi(X_{k-1}) - \nabla \psi(X_{k}))$, and the leading state is updated by $X_{k+1/2} = P_{X_{k}, \rset{n}{}}(-\gamma_k\tilde{F}_k)$. 
The prox-mapping $P_{X_{k}, \rset{n}{}}$ can be reduced to unconstrained problems with strongly convex objectives, which usually enjoy closed-form solutions or at least projection-free solutions with exponential convergence rates. 
For later convergence analysis, we impose the additional regularity that the group DGF $\psi$ is norm-like. 
\begin{assumption}\label{asp:distgenfunc-lipsc}
The group DGF $\psi$ is $\tilde{L}$-smooth on $\mathcal{X}_a$, i.e., for arbitrary $x_a$ and $x_b$ in $\mathcal{X}_a$, 
\begin{align*}
    \psi(x_a) \leq \psi(x_b) + \langle \nabla \psi(x_b), x_a - x_b\rangle + \frac{\tilde{L}}{2} \norm{x_a - x_b}^2. 
\end{align*}
An equivalent condition is that $\nabla \psi: \mathcal{X}_a \to \rset{n}{}$ is $\tilde{L}$-Lipschitz: 
\begin{align*}
    \langle \nabla \psi(x_a) - \nabla \psi(x_b), x_a - x_b\rangle \leq \tilde{L} \norm{x_a - x_b}^2. 
\end{align*}
As a result, for each player $i \in \playerN$, its DGF $\psi^i$ is $\tilde{L}^i$-smooth with the constant $\tilde{L}^i \leq \tilde{L}$. 
\end{assumption}

The zeroth-order learning algorithm with the reflected mirror descent (RMD) is summarized in Algorithm~\ref{alg:refl-desc}. 
We let $\mathcal{X}_R$ denote the reflected space of $\mathcal{G}$, i.e., the range of $X_{k+1/2}$ in Algorithm~\ref{alg:refl-desc}.  
A few remarks are in order concerning the relationship among $\mathcal{X}$, $\mathcal{X}_a$, and $\mathcal{X}_R$. 
In the RMD framework, the leading action $X_{k+1/2} \in \mathcal{X}_R$ can fall outside the strategy space $\mathcal{X}$ but should always sit inside $\mathcal{X}_a$, where the necessary regularity assumptions hold. 
As such, compared with OMD, the approximation strategy in RMD possesses better computational efficiency yet imposes more regularity. 
The optimality condition for the update of leading states suggests that $\nabla \psi(X_{k+1/2}) - \nabla \psi(X_k) = \nabla \psi(X_k) - \nabla \psi(X_{k-1})$. 
By the $\tilde{\mu}$-strong convexity and the $\tilde{L}$-smoothness assumed in Assumption~\ref{asp:distgenfunc-lipsc}, $\tilde{\mu}\norm{X_{k+1/2} - X_k} \leq \tilde{L}\norm{X_{k} - X_{k-1}}$ with $X_{k}$ and $X_{k-1} \in \mathcal{X}$, which implies that the reflected set $\mathcal{X}_R$ is bounded.
Furthermore, for any iteration $k$, $\norm{X_{k+1/2} - X_{k}} \leq L \gamma_{k-1}/\Tilde{\mu} \cdot \norm{G_{k-1}}$, where the boundedness of the random variable $\norm{G_k}_*$ will be established in Lemma~\ref{le:bdd-stoch-err} and $\mathcal{X}_R$ can be made arbitrarily close to $\mathcal{X}$ by choosing $\gamma_k$ sufficiently small. 
Altogether, we should have $\mathcal{X} \subseteq \mathcal{X}_R \subseteq \mathcal{X}_a$ and $\mathcal{X}_R \subseteq \cl{\dom{\psi}}$. 

\begin{algorithm}
\SetAlgoLined
\caption{Zeroth-Order Learning of CPs Based on Reflected Mirror Descent (Player $i$)}
\label{alg:refl-desc}
\textbf{Initialize:} 
$X^i_0 = X^i_{1/2} = X^i_1 \in \mathcal{X}^i \cap \dom{\psi^i}$ arbitrarily; 
$\hat{J}^i_0 = J^i(X^i_{1/2}; X^{-i}_{1/2})$; 
$p^i, r^i$ to be the center and radius of an arbitrary ball within the reflected set $\mathcal{X}^i_a$\;
\textbf{At the $k$-th iteration ($k \in \nset{}{+}$)}: \\
\qquad $X^i_{k+1/2} \leftarrow P_{X^i_k, \rset{n^i}{}} ( - (\nabla \psi^i(X^i_{k-1}) - \nabla \psi^i(X^i_{k})))$ \;
\qquad Randomly sample the direction $u^i_k$ from $\mathbb{S}^{n^i}$\; 
\qquad $\hat{X}^i_{k+1/2} \leftarrow (1 - \frac{\delta_k}{r^i})X^i_{k+1/2} + \frac{\delta_k}{r^i}(p^i + r^i u^i_k)$ \;
\qquad Take action $\hat{X}^i_{k+1/2}$ and observe the realized objective function value $\hat{J}^i_{k} \coloneqq J^i(\hat{X}^i_{k+1/2}; \hat{X}^{-i}_{k+1/2})$\;
\qquad $G^i_k \leftarrow \frac{n^i}{\delta_k}(\hat{J}^i_k - \hat{J}^i_{k-1})u^i_k$\; 
\qquad $X^i_{k+1} \leftarrow P_{X^i_k, \mathcal{X}^i}(- \gamma_kG^i_k)$\;
\textbf{Return:} $\{\hat{X}^i_{k+1/2}\}_{i \in \playerN}$.
\end{algorithm}

The lemma below shows that the proposed algorithms can maintain the bounded dual norm of the stochastic error $V_k$ by properly tuning the decaying rate of step size  slightly faster than that of query radius. 

\begin{lemma}\label{le:bdd-stoch-err}
Suppose that Assumption~1 holds. 
In addition, the monotonically decreasing sequences of step size $(\gamma_k)_{k \in \nset{}{+}}$ and the query radius $(\delta_k)_{k \in \nset{}{+}}$ satisfy: $\lim_{k\to \infty} \gamma_k = 0$, $\lim_{k \to\infty} \delta_k = 0$, $\sum_{k \in \nset{}{}} \gamma_k = \infty$, and $\lim_{k \to \infty} \gamma_k / \delta_k = 0$. 
Consider the RPG $(G_k)_{k \in \nset{}{+}}$ generated by Algorithms~\ref{alg:optm-mrdesc} and \ref{alg:refl-desc} and Algorithm~\ref{alg:refl-desc} is executed with $\psi$ satisfying Assumption~\ref{asp:distgenfunc-lipsc}.
Then for each iteration $k$, $\norm{V_k}^2_* \leq C_V$ for some constant $C_V$. 
\end{lemma}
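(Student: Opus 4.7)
The plan is to bound $\|V_k\|_*^2$ by first reducing to a bound on $\|G_k\|_*$, then exploiting the residual structure of \eqref{eq:grad-hp-est-adj} to close a two-step recursion. The reduction is almost immediate: by Assumption~\ref{asp:objt-set} the partial gradients $\nabla_{x^i}J^i$ are continuous on the compact set $\mathcal{X}_a$, hence uniformly bounded, and the smoothed gradient $\nabla_{x^i}\tilde{J}^i_{\delta_k}$ inherits the same bound as a convolution against a probability kernel. From $V^i_k = G^i_k - \nabla_{x^i}\tilde{J}^i_{\delta_k}(\bar{X}_{k+1/2})$ it therefore suffices to produce a deterministic upper bound on $\|G_k\|_*$.

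Next I would use the Lipschitz continuity of $J^i$ on $\mathcal{X}_a$ (a consequence of Assumption~\ref{asp:objt-set}) to get $|J^i(\hat{X}_{k+1/2})-J^i(\hat{X}_{k-1/2})| \leq L_J\|\hat{X}_{k+1/2}-\hat{X}_{k-1/2}\|$, and decompose
\begin{equation*}
\hat{X}^i_{k+1/2}-\hat{X}^i_{k-1/2} = (X^i_{k+1/2}-X^i_{k-1/2}) + \delta_ku^i_k - \delta_{k-1}u^i_{k-1} - \tfrac{\delta_k}{r^i}(X^i_{k+1/2}-p^i) + \tfrac{\delta_{k-1}}{r^i}(X^i_{k-1/2}-p^i).
\end{equation*}
Since $\mathcal{X}$ is compact and $\|u^i_k\|_2 = 1$, the last four terms are bounded in norm by a geometric constant times $\delta_{k-1}$, so
\begin{equation*}
\|G^i_k\|_* \leq \frac{n^i M L_J}{\delta_k}\bigl(\|X_{k+1/2}-X_{k-1/2}\| + C\delta_{k-1}\bigr),
\end{equation*}
where $M$ absorbs the finite-dimensional equivalence-of-norms factor between $\|\cdot\|_2$ and $\|\cdot\|_*$.

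The crucial step is bounding $\|X_{k+1/2}-X_{k-1/2}\|$ by a small multiple of past residual norms. Using the prox-mapping estimate $\|P_{x,\mathcal{X}}(-y)-x\| \leq (2/\tilde{\mu})\|y\|_*$, which follows from the $\tilde{\mu}$-strong convexity of $\psi$ (Lemma~\ref{le:distgen-func}), chaining through $X_{k-1}$ and $X_k$ in Algorithm~\ref{alg:optm-mrdesc} yields
\begin{equation*}
\|X_{k+1/2}-X_{k-1/2}\| \leq \tfrac{4\gamma_{k-1}}{\tilde{\mu}}\|G_{k-1}\|_* + \tfrac{2\gamma_{k-1}}{\tilde{\mu}}\|G_{k-2}\|_*.
\end{equation*}
For Algorithm~\ref{alg:refl-desc} the reflected update substitutes $\nabla\psi(X_{k-1})-\nabla\psi(X_k)$ for the gradient, whose dual norm is controlled by $\tilde{L}\|X_{k-1}-X_k\|$ via Assumption~\ref{asp:distgenfunc-lipsc}, producing an analogous bound with an extra $\tilde{L}/\tilde{\mu}$ factor. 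Substituting back produces the two-step recursion
\begin{equation*}
\|G_k\|_* \leq C_1\frac{\gamma_{k-1}}{\delta_k}\bigl(\|G_{k-1}\|_* + \|G_{k-2}\|_*\bigr) + C_2\frac{\delta_{k-1}}{\delta_k}.
\end{equation*}

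To close the argument I would invoke the decay conditions: $\gamma_{k-1}/\delta_k \to 0$ (using $\gamma_k/\delta_k \to 0$ together with the bounded ratio $\delta_{k-1}/\delta_k$ that holds for any typical polynomial schedule), and $\delta_{k-1}/\delta_k$ uniformly bounded. Choose $K$ so that $C_1\gamma_{k-1}/\delta_k \leq 1/4$ for $k \geq K$, let $\theta$ upper-bound $C_2\delta_{k-1}/\delta_k$, and set $M = \max\{\max_{0 \leq k < K}\|G_k\|_*,\,2\theta\}$; the initial $K$ values are finite because $\hat{X}_{k+1/2}\in\mathcal{X}_a$ makes each $|J^i(\hat{X}_{k+1/2})|$ bounded and $\delta_k$ is bounded away from zero on the finite range. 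A two-step induction then yields $\|G_k\|_* \leq (1/2)M + \theta \leq M$ for all $k$, and hence $\|V_k\|_*^2 \leq C_V$ for a suitable constant. The main obstacle will be tracking constants cleanly through the two displacement chains and confirming, for RMD, that the iterate $X_{k+1/2}$ which may leave $\mathcal{X}$ still remains in the bounded reflected set $\mathcal{X}_R\subseteq \mathcal{X}_a$ where the Lipschitz constants apply—a point the surrounding text already secures via Assumption~\ref{asp:distgenfunc-lipsc}.
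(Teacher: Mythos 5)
Your proposal follows essentially the same route as the paper's proof: reduce $\norm{V_k}_*$ to a bound on $\norm{G_k}_*$, use the residual structure together with the $1/\tilde{\mu}$-Lipschitz prox-mapping (and the $\tilde{L}/\tilde{\mu}$ factor for RMD) to obtain a two-step linear recursion with coefficient $O(\gamma_{k-1}/\delta_k)\to 0$, and close it after a finite index $K$. The only cosmetic differences are that the paper runs the recursion in squared norms and certifies stability via Jury's test rather than your direct induction, and it justifies $\gamma_{k-1}/\delta_k\to 0$ from the stated hypotheses via its Lemma~\ref{le:seq-convg} instead of appealing to a polynomial schedule.
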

\begin{proof}
See Appendix~\ref{appd:rpg}.
\end{proof}


\section{Convergence Properties of the Proposed Algorithms}\label{sect:convg-res}

In this section, we will determine the critical points convergence properties of Algorithms~\ref{alg:optm-mrdesc} and \ref{alg:refl-desc} in non-cooperative games under several different regularity conditions. 
The analysis will be on the asymptotic convergence properties, the ergodic convergence rate, and the convergence rate of the sequence of realized actions. 

\subsection{Almost-Sure Convergence of the Proposed Algorithms in Pseudo-monotone Plus Cases}\label{sect:convg}

We begin with the asymptotic convergence analysis for the games that possess pseudo-monotone plus pseudogradient. 

\begin{theorem}\label{thm:asconvg}
Consider a pseudo-monotone plus game $\mathcal{G}$. 
Suppose that the players of $\mathcal{G}$ follow Algorithms~\ref{alg:optm-mrdesc} and Assumptions~\ref{asp:objt-set} to \ref{asp:breg-recip} hold, or players all perform Algorithm~\ref{alg:refl-desc} and Assumptions~\ref{asp:objt-set} to \ref{asp:distgenfunc-lipsc} hold. 
Moreover, the step size $(\gamma_k)_{k \in \nset{}{+}}$ and query radius $(\delta_k)_{k \in \nset{}{+}}$ are monotonically decreasing and satisfy 
\begin{align}\label{eq:merelm-params}
\begin{split}
& \sum_{k \in \nset{}{+}} \gamma_k = \infty, \; 
\sum_{k \in \nset{}{+}} \gamma_k^2 < \infty, \\
& \sum_{k \in \nset{}{+}}\gamma_k\delta_k < \infty, \; \lim_{k \to \infty} \gamma_k/\delta_k = 0. 
\end{split}
\end{align}
Then the sequence of play $(\hat{X}_{k+1/2})_{k \in \nset{}{+}}$ converges to one of the CP $x_*$ almost surely.
\end{theorem}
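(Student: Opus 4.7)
The plan is to follow the standard quasi-Fej\'er template: derive a one-step Bregman energy inequality along an arbitrary critical point, reduce it to an almost-supermartingale recursion, invoke the Robbins--Siegmund theorem, and finally upgrade subsequential convergence to last-iterate convergence using pseudo-monotonicity plus and Bregman reciprocity.

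For a fixed $x_\star \in \mathcal{X}_*$, I would apply the three-point prox inequality to the two prox-mappings in Algorithm~\ref{alg:optm-mrdesc} (and analogously to the reflected update in Algorithm~\ref{alg:refl-desc}), and plug in the decomposition $G_k = F(X_{k+1/2}) + V_k + B_k$ from \eqref{eq:est-decomp} to obtain a recursion of the shape
\begin{align*}
D(x_\star, X_{k+1}) \leq\;& D(x_\star, X_k) - \gamma_k \langle F(X_{k+1/2}), X_{k+1/2} - x_\star\rangle \\
& - D(X_{k+1/2}, X_k) - D(X_{k+1}, X_{k+1/2}) + R_k,
\end{align*}
where $R_k$ gathers the single-call discrepancy $\gamma_k\langle G_k - G_{k-1}, X_{k+1/2} - X_{k+1}\rangle$ (OMD) or the reflection gap $\langle \nabla\psi(X_k) - \nabla\psi(X_{k-1}), X_{k+1} - X_{k+1/2}\rangle$ (RMD, controlled via Assumption~\ref{asp:distgenfunc-lipsc}), together with the cross terms $\gamma_k\langle B_k + V_k, x_\star - X_{k+1/2}\rangle$. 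Using Lemmas~\ref{le:codom-err}--\ref{le:bdd-stoch-err}, the $\tilde\mu$-strong convexity of $\psi$, and Young's inequality, $R_k$ splits into (a) a mean-zero martingale increment, (b) a portion absorbed into the two negative Bregman terms, and (c) deterministic residuals of order $\gamma_k^2 + \gamma_k\delta_k$. The schedule \eqref{eq:merelm-params} makes (c) summable and keeps the conditional variance of (a) summable, so after conditioning on $\mathcal{F}_k$ the recursion becomes an almost-supermartingale.

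Robbins--Siegmund then yields, on an a.s.\ event: $D(x_\star, X_k)$ converges to some finite limit for every $x_\star \in \mathcal{X}_*$ (so $(X_k)$ is bounded); $\sum_k D(X_{k+1/2}, X_k) < \infty$, hence $\norm{X_{k+1/2} - X_k} \to 0$; and, since pseudo-monotonicity at $x_\star$ forces $\langle F(X_{k+1/2}), X_{k+1/2} - x_\star\rangle \geq 0$, also $\sum_k \gamma_k \langle F(X_{k+1/2}), X_{k+1/2} - x_\star\rangle < \infty$. As $\sum_k \gamma_k = \infty$, this last fact plus compactness of $\mathcal{X}$ yields a subsequence $X_{k_n+1/2} \to \tilde x \in \mathcal{X}$ with $\langle F(\tilde x), \tilde x - x_\star\rangle = 0$; combined with $\langle F(x_\star), \tilde x - x_\star\rangle \geq 0$ from criticality, the pseudo-monotone plus assumption yields $F(\tilde x) = F(x_\star)$, from which $\tilde x \in \mathcal{X}_*$ follows by a direct VI check (expanding $\langle F(\tilde x), x - \tilde x\rangle$ using $F(\tilde x)=F(x_\star)$ and the two vanishing terms). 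Using this $\tilde x$ as the test critical point, Bregman reciprocity (Assumption~\ref{asp:breg-recip}) forces the a.s.\ limit of $D(\tilde x, X_k)$ to be $0$, so $X_k \to \tilde x$ a.s.; then $\hat X_{k+1/2} \to \tilde x$ a.s.\ because $\norm{\hat X_{k+1/2} - X_{k+1/2}} = O(\delta_k) \to 0$. The main obstacle I foresee is controlling the OMD single-call residual: since Lemma~\ref{le:codom-err} gives only a pointwise $\norm{V_k}_* = O(1/\delta_k)$ bound, the difference $G_k - G_{k-1}$ is \emph{not} small, and the only way to tame the inner product is Young's inequality against $D(X_{k+1}, X_{k+1/2}) \geq \tilde\mu/2\, \norm{X_{k+1}-X_{k+1/2}}^2$ with weights calibrated using the uniform $L^2$-bound $\norm{V_k}_*^2 \leq C_V$ from Lemma~\ref{le:bdd-stoch-err}, which itself relies on $\gamma_k/\delta_k \to 0$. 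The RMD analogue is easier because the reflected proxy is $\mathcal{F}_k$-measurable and bounded by $\tilde L\norm{X_k - X_{k-1}}$, whose square is summable via the previous iterate's energy bound.
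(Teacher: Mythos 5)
Your proposal follows essentially the same route as the paper's proof: the three-point prox inequality (Lemma~\ref{le:stdeq}) applied with the error decomposition $G_k = F(X_{k+1/2}) + B_k + V_k$, the Robbins--Siegmund theorem yielding exactly the three claims you list, the pseudo-monotone-plus identification of a cluster point as a critical point via $F(\tilde x)=F(x_\star)$, and Bregman reciprocity to upgrade subsequential to last-iterate convergence; your handling of the single-call residual through the uniform bound of Lemma~\ref{le:bdd-stoch-err} (itself resting on $\gamma_k/\delta_k\to 0$) is precisely how the paper makes the $O(\gamma_k^2)$ term summable. The one point you gloss over is in the Algorithm~\ref{alg:refl-desc} branch: there $X_{k+1/2}$ lives in the reflected set $\mathcal{X}_R$, which may leave $\mathcal{X}$, so pseudo-monotonicity (defined only for points of $\mathcal{X}$) cannot be invoked directly to conclude $\langle F(X_{k+1/2}), X_{k+1/2}-x_*\rangle\geq 0$; the paper circumvents this by trading that inner product for $\langle F(X_k), X_k - x_*\rangle$ at the cost of a correction $\Delta^\prime_k = O(\gamma_k\norm{X_{k+1/2}-X_k})=O(\gamma_k\gamma_{k-1})$, which is summable. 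With that adjustment your argument goes through unchanged.
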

\begin{proof}
See Appendix~\ref{appd:as-convg}.
\end{proof}

Inspired by the discussion about SMP in \cite{kannan2019optimal}, we extend the convergence results in Theorem~\ref{thm:asconvg} to accommodate several variants of pseudo-monotone plus games given in Def.~\ref{def:regu}, with the results formally stated in the corollary below. 
Before proceeding, we briefly clarify the interrelations between the various classes in Def.~\ref{def:regu}: $(\romannum{5})$ and $(\romannum{6})$ are neither sub-classes nor super-classes of $(\romannum{2})$, i.e., the pseudo-monotone plus class, while $(\romannum{3})$ and $(\romannum{4})$ impose stronger regularity and are sub-classes of $(\romannum{2})$ and $(\romannum{5})$. 

\begin{coro}\label{coro:variant-asconvg}
Consider a game $\mathcal{G}$ with its pseudogradient $F$ satisfying any of Def.~\ref{def:regu} $(\romannum{3})$-$(\romannum{6})$, while the other settings are the same as those considered in Theorem~\ref{thm:asconvg}. 
Then the sequence of play $(\hat{X}_{k+1/2})_{k \in \nset{}{+}}$ converges to one of the CP $x_*$ a.s.
\end{coro}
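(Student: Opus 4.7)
The plan is to recycle the Fej\'er-type recursion and Robbins--Siegmund machinery from the proof of Theorem~\ref{thm:asconvg}, and only to revisit the step that identifies accumulation points of $(\hat X_{k+1/2})$ with critical points. First I would dispose of cases $(\romannum{3})$ and $(\romannum{4})$ by noting that both are sub-classes of pseudo-monotone plus: if $\langle F(y), x-y\rangle \geq 0$ and $\langle F(x), x-y\rangle = 0$, strict pseudo-monotonicity forces $x = y$ directly, while $\mu$-strong pseudo-monotonicity gives $\mu \norm{x-y}^2 \leq 0$ and hence $x=y$; in either case $F(x) = F(y)$ holds trivially, so Theorem~\ref{thm:asconvg} applies verbatim.

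For cases $(\romannum{5})$ and $(\romannum{6})$, the key observation is that $\mathcal{X}_*$ still enjoys the variational-stability property $\langle F(x), x - x_*\rangle \geq 0$ for all $x \in \mathcal{X}$ and $x_* \in \mathcal{X}_*$: in $(\romannum{5})$ this is the weak form of the defining inequality, while in $(\romannum{6})$ it follows because a pseudoconvex potential has pseudo-monotone gradient (Karamardian's theorem, quoted after Def.~\ref{def:regu}). This is exactly the signed term that drives the Robbins--Siegmund step in Theorem~\ref{thm:asconvg}, so for every $x_* \in \mathcal{X}_*$ I still obtain a.s.\ that $D(x_*, X_k)$ converges and $\sum_k \gamma_k \langle F(X_{k+1/2}), X_{k+1/2} - x_*\rangle < \infty$. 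Combined with $\sum_k \gamma_k = \infty$ and the compactness of $\mathcal{X}_a$, this produces a random subsequence $k_j$ along which $\hat X_{k_j+1/2} \to \hat x$ and $\langle F(\hat x), \hat x - x_*\rangle = 0$.

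It remains to argue $\hat x \in \mathcal{X}_*$ in each case. In $(\romannum{5})$, $\hat x \notin \mathcal{X}_*$ would give $\langle F(\hat x), \hat x - x_*\rangle > 0$ by strict coherence, contradicting the identity just obtained. In $(\romannum{6})$, pseudoconvexity used in the form $\langle \nabla\Phi(\hat x), x_* - \hat x\rangle \geq 0 \Rightarrow \Phi(x_*) \geq \Phi(\hat x)$, together with the standard fact that any CP of a pseudoconvex potential is a global minimizer of $\Phi$ on $\mathcal{X}$ (giving the reverse inequality $\Phi(\hat x) \geq \Phi(x_*)$), forces $\Phi(\hat x) = \Phi(x_*)$, so $\hat x$ is itself a minimizer and hence a CP. Once $\hat x \in \mathcal{X}_*$ is secured, I apply the already-established a.s.\ convergence of $D(\cdot, X_k)$ at the specific point $\hat x$ and use $D(\hat x, X_{k_j}) \to 0$ (via Bregman reciprocity, Assumption~\ref{asp:breg-recip}) to pin the whole-sequence limit to $\hat x$. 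I expect the main obstacle to be the measurability bookkeeping around the random, sample-path-dependent subsequence: the identification step must be carried out on a single probability-one event on which the Fej\'er and Robbins--Siegmund conclusions hold simultaneously for a suitable (e.g.\ countable dense) family of reference critical points in $\mathcal{X}_*$, in the same style as the pseudo-monotone plus argument.
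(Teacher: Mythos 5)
Your proposal is correct and follows essentially the same route as the paper's proof: apply the Robbins--Siegmund machinery from Theorem~\ref{thm:asconvg} to extract a subsequential limit $X^\dagger$ with $\langle F(X^\dagger), X^\dagger - x_*\rangle = 0$, identify $X^\dagger$ as a critical point case by case (strict coherence by contradiction; pseudoconvex potential via the two inequalities $\Phi(x_*)\geq\Phi(X^\dagger)$ and $\Phi(X^\dagger)\geq\Phi(x_*)$), and then upgrade to whole-sequence convergence via Bregman reciprocity. The only cosmetic difference is that you dispose of the strictly and strongly pseudo-monotone cases by showing they are sub-classes of the pseudo-monotone plus class of Theorem~\ref{thm:asconvg}, whereas the paper folds them into the strictly coherent case; both reductions are valid and the paper itself notes both inclusions before the corollary.
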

\begin{proof}
See Appendix~\ref{appd:as-convg}.
\end{proof}


\subsection{Convergence Rates of Ergodic Average in Merely Monotone Cases}

The convergence properties in the last subsection are asymptotic in nature. 
To characterize the speeds of convergence of the proposed iterations, we will focus on the generated ergodic sequences and quantify their inaccuracy through the metric function \eqref{eq:merit-func}, in the spirit of ergodic convergence analysis of monotone VIs. 
The ergodic average of the iteration's actual sequence of play is defined as
\begin{align}\label{eq:erg-avg}
    \check{X}_k \coloneqq \sum^{k}_{t=1} \gamma_t \hat{X}_{t+1/2} / \sum^{k}_{t=1} \gamma_t.
\end{align}

\begin{theorem}\label{thm:erg-rate}
Consider a merely monotone game $\mathcal{G}$. 
The other settings are the same as the ones given in Theorem~\ref{thm:asconvg}. 
Then the ergodic average $\check{X}_k$ satisfies, 
\begin{align}
\expt{}{\text{Err}_{\mathcal{X}}(\check{X}_k)} \leq \frac{\expt{}{\max_{p \in \mathcal{X}} D(p, X_1)} + M}{\sum_{t=1}^k \gamma_t}, \; \forall k \in \nset{}{+}
\end{align}
where $M$ is a constant that depends on the properties of $\mathcal{G}$ and the specific choices of $\gamma_k$ and $\delta_k$.
\end{theorem}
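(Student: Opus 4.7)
The plan is to combine the prox-mapping inequalities from Lemma~\ref{le:distgen-func} with monotonicity of $F$ to obtain a per-iteration descent relation that telescopes the Bregman divergence, and then dualize through the merit function~\eqref{eq:merit-func}. Concretely, for each $p\in\mathcal{X}$ I aim at
\[
\gamma_k\langle F(X_{k+1/2}),X_{k+1/2}-p\rangle\le D(p,X_k)-D(p,X_{k+1})+R_k,
\]
where $R_k$ collects only terms that are summable under~\eqref{eq:merelm-params}. Summing $t=1,\ldots,k$, invoking $F$-monotonicity to replace the left-hand side by the lower bound $\gamma_t\langle F(p),X_{t+1/2}-p\rangle$, and transferring from $X_{t+1/2}$ to the executed action through $\|\hat X_{t+1/2}-X_{t+1/2}\|=O(\delta_t)$ (from the convex feasibility adjustment) yields $(\sum_{t=1}^k\gamma_t)\langle F(p),\check X_k-p\rangle$ on the left, whose supremum over $p\in\mathcal{X}$ is precisely $(\sum_{t=1}^k\gamma_t)\,\text{Err}_{\mathcal{X}}(\check X_k)$.

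For Algorithm~\ref{alg:optm-mrdesc}, the per-iteration estimate comes from applying the three-point Bregman identity to both prox-mappings (the $X_{k+1}$-update with test point $p$ and the $X_{k+1/2}$-update with test point $X_{k+1}$) and adding them via the split
\[
\gamma_k\langle G_k,X_{k+1/2}-p\rangle=\gamma_k\langle G_k,X_{k+1}-p\rangle+\gamma_k\langle G_{k-1},X_{k+1/2}-X_{k+1}\rangle+\gamma_k\langle G_k-G_{k-1},X_{k+1/2}-X_{k+1}\rangle.
\]
Young's inequality on the last term, together with the strong-convexity lower bound $D(X_{k+1},X_{k+1/2})\ge(\tilde\mu/2)\|X_{k+1}-X_{k+1/2}\|^2$, absorbs the negative Bregman slack and leaves $R_k$ of order $(\gamma_k^2/\tilde\mu)(\|G_k\|_*^2+\|G_{k-1}\|_*^2)$ plus a bias drift. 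For Algorithm~\ref{alg:refl-desc} the same scheme applies verbatim, replacing $\gamma_k G_{k-1}$ by the reflected dual increment $\nabla\psi(X_{k-1})-\nabla\psi(X_k)$; Assumption~\ref{asp:distgenfunc-lipsc} then gives $\|X_{k+1/2}-X_k\|=O(\gamma_{k-1}\|G_{k-1}\|_*)$, so the cross term is controlled analogously.

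Next, decomposing $G_t=F(X_{t+1/2})+B_t+V_t$ as in~\eqref{eq:est-decomp} splits off two residuals: the bias part $-\gamma_t\langle B_t,X_{t+1/2}-p\rangle$ and the noise part $-\gamma_t\langle V_t,X_{t+1/2}-p\rangle$. Lemma~\ref{le:codom-err} provides $\|B_t\|_*\le\alpha_B\delta_t$, so the bias contribution is bounded in expectation by a constant times $\sum_t\gamma_t\delta_t<\infty$, while Lemma~\ref{le:bdd-stoch-err} supplies $\mathbb{E}\|V_t\|_*^2\le C_V$ uniformly and, together with a uniform bound on $\mathbb{E}\|G_t\|_*^2$, absorbs the $\gamma_t^2$-residual by~\eqref{eq:merelm-params}.

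The hard part will be the noise term $\sup_{p\in\mathcal{X}}\sum_t\gamma_t\langle V_t,X_{t+1/2}-p\rangle$: because the supremum sits outside the expectation, one cannot directly invoke $\mathbb{E}[V_t\mid\mathcal{F}_t]=0$, and a crude Cauchy--Schwarz bound would cost an unsummable $O(\sum_t\gamma_t)$. I plan to resolve this by a ghost-iterate construction: introduce an auxiliary mirror sequence $Z_t=\nabla\psi^*(\nabla\psi(Z_{t-1})+\gamma_t V_t)$ initialized at $Z_0=X_1$, split
\[
\langle V_t,X_{t+1/2}-p\rangle=\langle V_t,X_{t+1/2}-Z_{t-1}\rangle+\langle V_t,Z_{t-1}-p\rangle,
\]
and apply the standard mirror-descent inequality to $(Z_t)$ to obtain $\sum_t\gamma_t\langle V_t,Z_{t-1}-p\rangle\le D(p,Z_0)+(1/(2\tilde\mu))\sum_t\gamma_t^2\|V_t\|_*^2$. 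The residual $\sum_t\gamma_t\langle V_t,X_{t+1/2}-Z_{t-1}\rangle$ is a zero-mean martingale because both $X_{t+1/2}$ and $Z_{t-1}$ are $\mathcal{F}_t$-measurable. Passing to expectations, $D(p,Z_0)$ is absorbed into a second copy of $\mathbb{E}\max_p D(p,X_1)$, the $\sum\gamma_t^2$-tail is finite by~\eqref{eq:merelm-params} and Lemma~\ref{le:bdd-stoch-err}, and every remaining deterministic slack is collected into the constant $M$. Dividing through by $\sum_{t=1}^k\gamma_t$ then delivers the stated bound.
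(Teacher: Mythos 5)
Your proposal follows the same skeleton as the paper's proof: a per-iteration Bregman descent inequality (the paper packages your two three-point identities into Lemma~\ref{le:stdeq}, yielding \eqref{eq:optm-bd} and \eqref{eq:rfltdesc-cond-stdineq2}), monotonicity of $F$ to pass from $\langle F(X_{t+1/2}),X_{t+1/2}-p\rangle$ to $\langle F(p),\hat X_{t+1/2}-p\rangle$ at an $O(\delta_t)$ cost, telescoping, and absorption of the $O(\gamma_t\delta_t)$ bias and $O(\gamma_t^2)$ variance residuals into $M$ via \eqref{eq:merelm-params}. Where you genuinely depart is in the treatment of the martingale term $\sum_t\gamma_t\langle V_t,X_{t+1/2}-p\rangle$ under the supremum over $p$. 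The paper eliminates $V_t$ by taking $\expt{}{\cdot\mid\mathcal{F}_t}$ at a \emph{fixed} $p$ and only afterwards substitutes the random maximizer $p^*=\argmax_{p\in\mathcal{X}}\langle F(p),\check X_k-p\rangle$; since $p^*$ depends on the whole trajectory, $\expt{}{\langle V_t,X_{t+1/2}-p^*\rangle}$ need not vanish, and the interchange of $\max_p$ and $\expt{}{\cdot}$ is exactly the step your ghost-iterate sequence $Z_t$ is designed to justify (the classical device of Nemirovski et al.\ for stochastic VIs). Your version costs an extra copy of $\expt{}{\max_{p\in\mathcal{X}}D(p,X_1)}$ plus a $\sum_t\gamma_t^2\norm{V_t}^2_*$ tail, both of which are finite under \eqref{eq:merelm-params} and Lemma~\ref{le:bdd-stoch-err} and hence absorbed into $M$, so the stated bound survives unchanged; in this respect your argument is more complete than the one printed in the appendix. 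The one detail to make explicit for Algorithm~\ref{alg:refl-desc} is that $X_{t+1/2}\in\mathcal{X}_R$ may fall outside $\mathcal{X}$, so the monotonicity step should be routed through $\langle F(X_t),X_t-p\rangle$ with an additional summable correction, as the paper does via $\Delta^\prime_k$; your bound $\norm{X_{t+1/2}-X_t}=O(\gamma_{t-1}\norm{G_{t-1}}_*)$ is precisely what controls that correction.
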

\begin{proof}
See Appendix~\ref{appd:ergd-convg}. 
\end{proof}

\subsection{$O(1/k^{1-\epsilon})$ Convergence Rate of the Proposed Algorithms in Strongly Pseudo-Monotone Cases}

To study the convergence rate of the realized sequences of play, aside from the previous assumptions, we impose on $F$ the strong pseudo-monotonicity requirement. 
It has been proved in \cite[Thm.~2.1]{kim2016qualitative} that a strongly pseudo-monotone VI admits a unique solution $x_*$, which allows us to leverage the distance between the sequence and the unique solution to measure the convergence rate. 
In addition, we restrict to the case where the Bregman divergence is norm-like, i.e., Assumption~\ref{asp:distgenfunc-lipsc} holds. 
Under these conditions, we obtain the following global convergence result for Algorithms~\ref{alg:optm-mrdesc} and \ref{alg:refl-desc}. 

\begin{theorem}\label{thm:strmon-eucl-convgrate}
Consider a strongly pseudo-monotone game $\mathcal{G}$. 
Suppose that Assumptions \ref{asp:objt-set} to \ref{asp:distgenfunc-lipsc} hold and the players of $\mathcal{G}$ all follow either Algorithm~\ref{alg:optm-mrdesc} or \ref{alg:refl-desc} with their step sizes and query radius chosen as $\gamma_k = c_\gamma / (k + b_\gamma)^{a_\gamma}$ and $\delta_k = c_\delta / (k + b_\delta)^{a_\delta}$, respectively. 
Moreover, if $a_\gamma$ and $a_\delta$ satisfy $0 < a_\delta < a_\gamma < 1$ and $a_\gamma + a_\delta > 1$, then the sequence of actions of play enjoys the convergence rate below:
\begin{align}
\expt{}{\norm{\hat{X}_{k+1/2} - x_*}^2} \leq \frac{M_1}{k^{a_\gamma + a_\delta - 1}} + \frac{M_2}{k}, \forall k > K
\end{align}
where $x_*$ is the unique CP of $\mathcal{G}$; 
$M_1$, $M_2$, and $K$ denote some constants that depend on the properties of $\mathcal{G}$ and the specific choices of $\gamma_k$ and $\delta_k$. 
\end{theorem}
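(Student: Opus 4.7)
The plan is to build a one-step recursion of the form
\begin{equation*}
\mathbb{E}[D(x_*, X_{k+1})] \le (1-c_1\gamma_k)\,\mathbb{E}[D(x_*,X_k)] + c_2\gamma_k^2 + c_3\gamma_k\delta_k^2,
\end{equation*}
valid for $k$ sufficiently large, and to extract the claimed rate via a standard polynomial-step Chung-type lemma. I would then translate the result from $\mathbb{E}[D(x_*,X_k)]$ to $\mathbb{E}[\|\hat X_{k+1/2}-x_*\|^2]$ using the $\tilde\mu$-strong convexity of $\psi$, the prox-mapping bound $\|X_{k+1/2}-X_k\|\le(\gamma_k/\tilde\mu)\|G_{k-1}\|_*$, and the perturbation identity $\hat X_{k+1/2}=\bar X_{k+1/2}+\delta_k u_k$ (which gives $\|\hat X_{k+1/2}-X_{k+1/2}\|=O(\delta_k)$).

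To obtain the recursion, I would instantiate the three-point Bregman identity (Lemma~\ref{le:distgen-func}) twice: once at the leading-state update with dual vector $-\gamma_k G_{k-1}$ for Algorithm~\ref{alg:optm-mrdesc} (or $-(\nabla\psi(X_{k-1})-\nabla\psi(X_k))$ for Algorithm~\ref{alg:refl-desc}) and once at the base-state update with $-\gamma_k G_k$, sum them, and cancel the common $D(X_{k+1},X_k)$ term. The resulting inequality pivots on $-\gamma_k\langle G_k, X_{k+1/2}-x_*\rangle$ and the mixing term $-\gamma_k\langle G_k-G_{k-1}, X_{k+1}-X_{k+1/2}\rangle$. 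Inserting the decomposition \eqref{eq:est-decomp}, strong pseudo-monotonicity of $F$ together with $\langle F(x_*), X_{k+1/2}-x_*\rangle\ge 0$ turns the signal piece into $-\gamma_k\mu\|X_{k+1/2}-x_*\|^2$; Young's inequality combined with Lemma~\ref{le:codom-err} absorbs half of this into an $O(\gamma_k\delta_k^2)$ bias residual; and the martingale piece $\langle V_k,X_{k+1/2}-x_*\rangle$ drops under $\mathbb{E}[\cdot\mid\mathcal{F}_k]$ since $X_{k+1/2}\in\mathcal{F}_k$. The mixing term is handled by Young's inequality against $-D(X_{k+1/2},X_k)$: the $\tilde\mu$-strong convexity of $\psi$ absorbs the primal-norm piece, and Lemma~\ref{le:bdd-stoch-err} together with the compactness of $\mathcal{X}_a$ bounds $\mathbb{E}[\|G_k-G_{k-1}\|_*^2\mid\mathcal{F}_k]$ uniformly, leaving an $O(\gamma_k^2)$ remainder.

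Converting $-\tfrac{\gamma_k\mu}{2}\|X_{k+1/2}-x_*\|^2$ into a genuine contraction on $D(x_*,X_k)$ proceeds via $\|X_{k+1/2}-x_*\|^2\ge\tfrac12\|X_k-x_*\|^2-\|X_{k+1/2}-X_k\|^2$, the prox-mapping bound on the last term, and $\|X_k-x_*\|^2\ge(2/\tilde L)D(x_*,X_k)$ from Assumption~\ref{asp:distgenfunc-lipsc}; the extra $O(\gamma_k^3)$ slack is dominated by $\gamma_k^2$. A Chung-type lemma applied to the resulting recursion with $\gamma_k=c_\gamma/(k+b_\gamma)^{a_\gamma}$, $0<a_\gamma<1$, yields $\mathbb{E}[D(x_*,X_k)]=O(k^{-(2a_\gamma-a_\gamma)})+O(k^{-(a_\gamma+2a_\delta-a_\gamma)})$; after adding the further $O(\gamma_k^2+\delta_k^2)$ gap incurred in the passage to $\mathbb{E}[\|\hat X_{k+1/2}-x_*\|^2]$ and regrouping under the constraints $a_\delta<a_\gamma<1$ and $a_\gamma+a_\delta>1$, this admits the announced form $M_1/k^{a_\gamma+a_\delta-1}+M_2/k$.

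The principal obstacle is the reflected-mirror-descent case, where $X_{k+1/2}\in\mathcal{X}_R$ may lie outside the strategy set $\mathcal{X}$ but still inside $\mathcal{X}_a$: the three-point identity must be instantiated over $\mathcal{X}_R$, one must verify that the Lipschitz, strong-convexity, and smoothness constants of $F$ and $\psi$ all transfer to $\mathcal{X}_a$, and the reflection increment $\nabla\psi(X_k)-\nabla\psi(X_{k-1})$ must be re-expressed as a surrogate for $\gamma_{k-1}G_{k-1}$ by invoking the $\tilde L$-smoothness of $\psi$, so that the OMD-style descent inequality still closes. A secondary subtlety is that the Young-coefficient absorptions close only once $\gamma_k$ falls below a threshold depending on $\mu$, $\tilde\mu$, and $\tilde L$, which is what produces the ``$\forall k>K$'' qualifier in the theorem.
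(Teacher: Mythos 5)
Your proposal follows essentially the same route as the paper's proof: a one-step contraction on $\expt{}{D(x_*,X_k)}$ obtained from the three-point/prox descent inequality (the paper reuses \eqref{eq:optm-bd} and \eqref{eq:rfltdesc-cond-stdineq2} rather than re-deriving), strong pseudo-monotonicity converted into a $D(x_*,X_k)$ contraction via exactly your chain $\norm{X_{k+1/2}-x_*}^2\ge\tfrac12\norm{X_k-x_*}^2-\norm{X_{k+1/2}-X_k}^2$ and Assumption~\ref{asp:distgenfunc-lipsc}, a Chung-type lemma (Lemma~\ref{le:recr-lin-ineq}), and a final $O(\gamma_k^2+\delta_k^2)$ translation to $\hat X_{k+1/2}$. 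One genuine difference: you handle the bias term by Young's inequality against half the strong-monotonicity signal, producing an $O(\gamma_k\delta_k^2)$ residual, whereas the paper applies Cauchy--Schwarz with the diameter of $\mathcal{X}$ and keeps an $O(\gamma_k\delta_k)$ driving term; your version is actually sharper (it would yield $O(k^{-\min(2a_\gamma,\,a_\gamma+2a_\delta)-\!+1})$-type exponents, which dominate the announced $k^{-(a_\gamma+a_\delta-1)}$), so the claimed bound still follows. The one point you leave under-specified is the RMD branch: the real obstacle there is not that the constants of $F$ and $\psi$ must transfer to $\mathcal{X}_a$ (they do by assumption), but that $\langle F(x_*),X_{k+1/2}-x_*\rangle\ge 0$ — which you need before invoking strong pseudo-monotonicity — is only guaranteed for points in $\mathcal{X}$, and $X_{k+1/2}\in\mathcal{X}_R$ may lie outside it. The paper resolves this by pivoting the inner product to the base state, using $\langle F(X_k),X_k-x_*\rangle\ge\mu\norm{X_k-x_*}^2\ge(\mu/\tilde L)D(x_*,X_k)$ with $X_k\in\mathcal{X}$ and absorbing the discrepancy $\gamma_k\langle F(X_{k+1/2}),X_{k+1/2}-x_*\rangle-\gamma_k\langle F(X_k),X_k-x_*\rangle$ into a summable correction $\Delta_k'=O(\gamma_k\gamma_{k-1})$ controlled by the Lipschitz continuity of $F$ and $\norm{X_{k+1/2}-X_k}\le(\tilde L/\tilde\mu)\norm{X_k-X_{k-1}}$; you should make this substitution explicit for your argument to close in the reflected case.
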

\begin{proof}
See Appendix~\ref{appd:strg-mono-rate}. 
\end{proof}

Theorem~\ref{thm:strmon-eucl-convgrate} serves as the main explicit convergence rate analysis result for the realized actions. 
Theoretically, by letting $a_\gamma$ approach $1$ and $a_\delta$ approach $a_\gamma$ both from the left, the proposed algorithms can then achieve the convergence rate $\expt{}{\norm{\hat{X}_{k+1/2} - x_*}^2} = O(1/k^{1 - \epsilon})$ for some $\epsilon$ arbitrarily close to $0$. 
In the practical implementation, it is advisable to adopt a more conservative choice of $a_\gamma$ and $a_\delta$ rather than the one discussed above to circumvent bad transient behavior caused by undesirable $M_1$, $M_2$, and $K$. 
This statement is empirically supported through numerical experiments in Section~\ref{sect:thrm-ctrl}.

\section{Case Study and Numerical Simulations}\label{sect:simu}


\subsection{Portfolio Optimization}
In the single-agent portfolio optimization problem, an agent selects the best portfolio or asset distribution and aims to maximize its expected return\cite{liu2012one}. 
With a slight abuse of notation, assume there exist $N$ assets, whose rates of return are denoted by $\xi \coloneqq [\xi_1; \cdots; \xi_N]$ and are normally distributed, i.e., $\xi \sim \mathcal{N}(\mu, \Sigma)$ with the mean vector $\mu \coloneqq [\mu_1; \cdots; \mu_N] \in \rset{N}{+}$ and the covariance matrix $\Sigma \in \mathcal{S}^N_{++}$. 
We let $z \coloneqq [z_1; \cdots; z_N]$ represent the strategy or asset distribution of the agent, and accordingly, the total rate of return is $\eta = \xi^Tz \sim \mathcal{N}(\mu^Tz, z^T\Sigma z)$. 
Given the expected rate of return $r > 0$, the agent seeks to solve: $\maximize_{z} P(\eta \geq r) = \Phi(\frac{\mu^Tz - r}{\sqrt{z^T\Sigma z}})$, 
subject to $\bone_{N}^T z = 1$, $\mu^Tz - r \geq 0$, and $0 \leq z \leq 1$, where $\Phi(a) \coloneqq \int_{-\infty}^a \frac{1}{\sqrt{2\pi}}\exp(-\frac{\tau^2}{2})d\tau$ denotes the distribution function of standard normal distribution. 
This problem can be further reformulated by noting that $\Phi$ is monotonically increasing and conducting a simple coordinate transform $\varphi: \rset{N-1}{} \to \rset{N}{}$ with $\varphi: x \mapsto [x_1, \cdots, x_{N-1}, 1 - \bone_{N-1}^T x ]$ to procure a feasible set with a non-empty interior. 
Formally, the reformulated problem can be described as follows:
\begin{align}
\begin{split}
& \minimize_{x} J(x) = \frac{r - \mu^T \varphi(x)}{\sqrt{\varphi(x)^T\Sigma\varphi(x)}} \\
& \subj \; x \in \mathcal{X} \coloneqq \{x \in \rset{N-1}{} \mid  0 \leq x \leq 1, \bone_{N-1}^T x \leq 1,  \\
& \qquad\qquad\qquad  [\mu_N - \mu_1; \cdots; \mu_N - \mu_{N-1}]^Tx \leq \mu_N - r\}. 
\end{split}
\end{align}
We claim that the objective function $J(x) = J_n(x)/J_d(x)$ is pseudoconvex on $\mathcal{X}$ where the numerator and denominator $J_n(x) = r - \mu^T \varphi(x)$ and $J_d(x) = \sqrt{\varphi(x)^T\Sigma\varphi(x)}$ are both convex. 
For any $x \in \mathcal{X}$ and $x^\prime \in \mathcal{X}$ that satisfy $\langle \nabla J(x^\prime), x - x^\prime \rangle \geq 0$, we have $\langle \frac{1}{J_d(x^\prime)}\big(\nabla J_n(x^\prime) - \frac{J_n(x^\prime)}{J_d(x^\prime)}\nabla J_d(x^\prime)\big), x - x^\prime \rangle \geq 0$. 
Since $J_n(x) - J_n(x^\prime) \geq \langle \nabla J_n(x^\prime), x - x^\prime\rangle$, $J_d(x) - J_d(x^\prime) \geq \langle \nabla J_d(x^\prime), x - x^\prime\rangle$, $J_n$ is always non-positive on $\mathcal{X}$, and $J_d$ always positive, we finally deduce that $J(x) \geq J(x^\prime)$ and $J$ is pseudo-convex on $\mathcal{X}$. 

In the numerical simulation, the agent has $N=6$ assets to invest, the mean $\mu$ and covariance $\Sigma$ of which are randomly sampled and visualized in Fig.~\ref{fig:portfolio} (a) and (b). 
The expected rate of return $r$ is set as the average of $\mu_i$'s. 
We let the step size and query radius be of the form $\gamma_k = 1/(k+2\times10^3)^{a_{\gamma}}$ and $\delta_k = 1/(k+2\times10^3)^{a_{\delta}}$. 
The performance metrics include the relative updating distance $\norm{X_{k+1} - X_k}_2$ and the difference between the current value $J(X_k)$ and the optimal value $J^*$ obtained via \cite{kannan2019optimal}.
We illustrate the rolling averages of these metrics using solid lines with a window size of $100$ and the original fluctuations with semi-transparent curves in Fig.~\ref{fig:portfolio}. 
Note that in the experiments, RPGs are computed with different numbers of queries, i.e., $q=1, 5, 10$, while in the previous sections, we focus on analyzing the single-query ($q=1$) case.
\begin{figure}
    \centering
    \includegraphics[width=\figurewidth\textwidth]{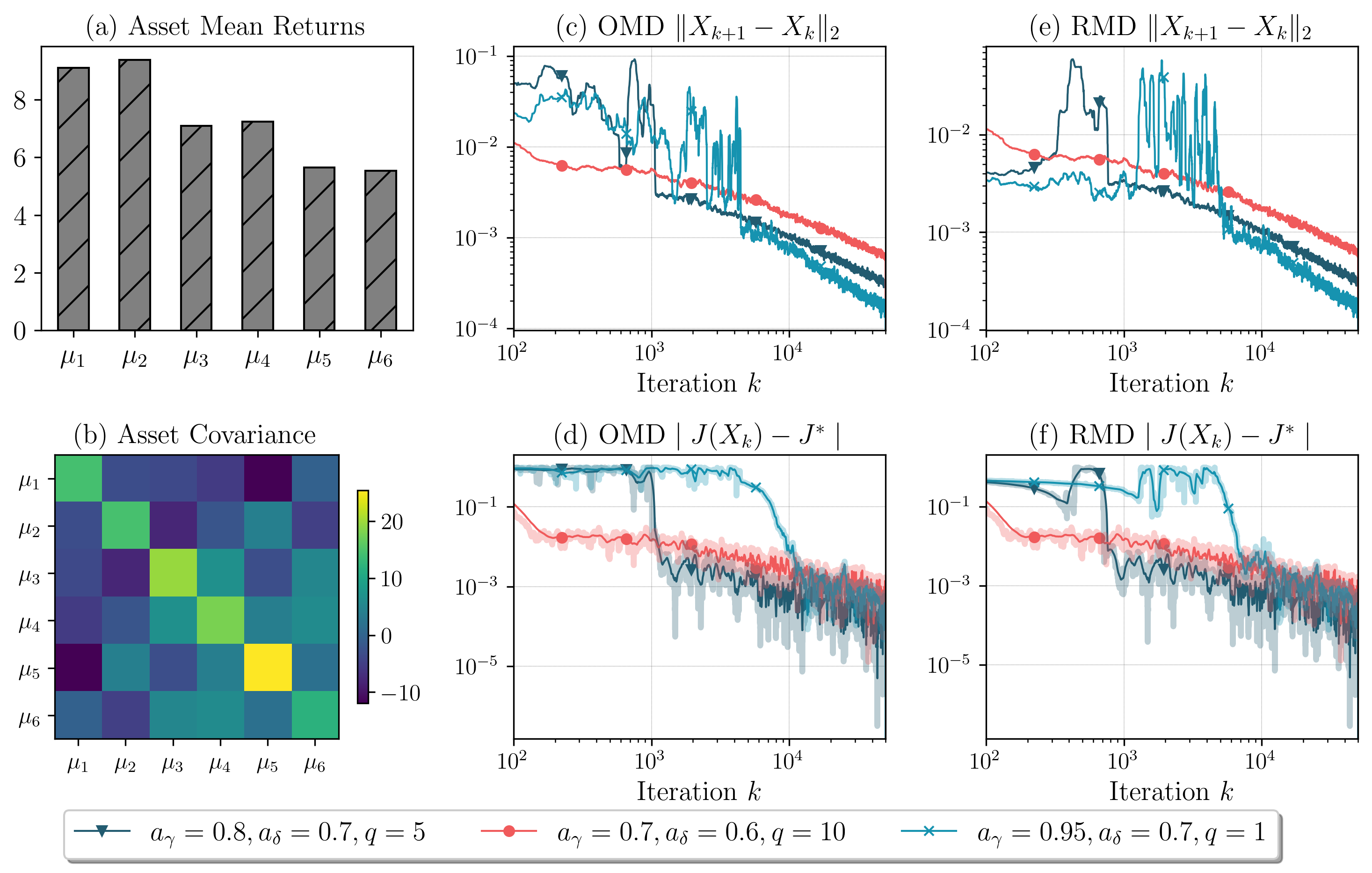}
    \caption{Performance of Algorithms~\ref{alg:optm-mrdesc} and \ref{alg:refl-desc} in Portfolio Optimization}
    \label{fig:portfolio}
\end{figure}


\subsection{Least Square Estimation in Linear Models}

Consider a dataset with data samples $\{(z_j, y_j)\}_{j=1}^{M}$ where $z_j \in \rset{N}{}$ represents an input vector and $y_j \in \rset{}{}$ denotes an output label. 
Moreover, $y_j$ and $z_j$ are related via a linear model, i.e., $y_j = w_0 + w^Tz_j + \xi_j$, where $w_0 \in \rset{}{}$, $w \in \rset{N}{}$, and $\xi_j$ represents some random noise. 
In this example, we assume that although the exact values of $w_0$ and $w$ are unavailable and need to be estimated from the data samples, $w_0$ and every entry of $w$ are known to be within some bounded interval $[-\bar{w}, \bar{w}]$ for some sufficiently large $\bar{w} \in \rset{}{++}$. 
For simplicity, let each $\Tilde{z}_j = [1; z_j]$, $\tilde{Z} = [\tilde{z}_1, \ldots, \tilde{z}_M]$, $y = [y_1; \cdots; y_M]$, and $\Tilde{w} = [w_0; w]$. 
Having all these in hand, the optimization problem can be written as:
\begin{align}
\underset{-\Bar{w} \leq \Tilde{w} \leq \Bar{w}}{\minimize} \; \frac{1}{2}\norm{\tilde{Z}^T\tilde{w} - y}^2_2
\end{align}
Similar to \cite[Sec.~VI]{gao2020continuous}, we proceed to recast the optimization problem above to a two-player zero-sum game. 
An auxiliary variable $\lambda \in \rset{M}{}$ is introduced such that we have the equivalence: 
$\frac{1}{2}\norm{\tilde{Z}^T\tilde{w} - y}^2_2 = \max_{\lambda \in \rset{M}{}} \lambda^T(\tilde{Z}^T\tilde{w} - y) - \frac{1}{2}\norm{\lambda}^2_2 = \max_{\lambda \in \rset{M}{}}J(\Tilde{w}, \lambda)$. 
Furthermore, the boundedness of $\Tilde{w}$ implies that of $\lambda$ and we also manually let $-\Bar{\lambda} \leq \lambda \leq \Bar{\lambda}$, for some $\Bar{\lambda}$ large enough. 
Denote the local objective functions $J^1(x^1; x^2) = J(x^1, x^2)$ and $J^2(x^2; x^1) = -J(x^1, x^2)$. 
Then, the two-player zero-sum game can be presented below:
\begin{align*}
\text{Player 1: }\underset{-\Bar{w} \leq x^1 \leq \Bar{w}}{\minimize}\; J^1(x^1; x^2), \; \text{Player 2: }\underset{-\Bar{\lambda} \leq x^2 \leq \Bar{\lambda}}{\minimize}\; J^2(x^2; x^1). 
\end{align*}
The associated pseudogradient is given by $F: 
\begin{psmallmatrix}x^1 \\ x^2\end{psmallmatrix} 
\mapsto M_{\text{lin}}
\begin{psmallmatrix}x^1 \\ x^2\end{psmallmatrix} 
+ \begin{psmallmatrix}0 \\ y\end{psmallmatrix}$, 
with the matrix $M_{\text{lin}} \coloneqq \begin{psmallmatrix} 0 & \Tilde{Z} \\ - \Tilde{Z}^T & I \end{psmallmatrix}$. 
The mere monotonicity of $F$ follows from the semi-positive definiteness of $M_{\text{lin}}$. 
To better visualize the problem, we consider the polynomial regression setting, where each $m$-th entry of the input vector $z_j$ is the $m$-th power of $[z_j]_1$. 
We select $N = 5$, $M = 10$, $\Bar{w} = \Bar{\lambda} = 5$, randomly sample each $[z_j]_1$ and $\xi_j$ from the compact intervals $[-1.5, 1.5]$ and $[-2, 2]$, respectively, and numerically confirm that $M_{\text{lin}}$ is full-rank such that the critical point $x^*$ of $F$ is unique in the interior. 

\begin{figure}
    \centering
    \includegraphics[width=\figurewidth\textwidth]{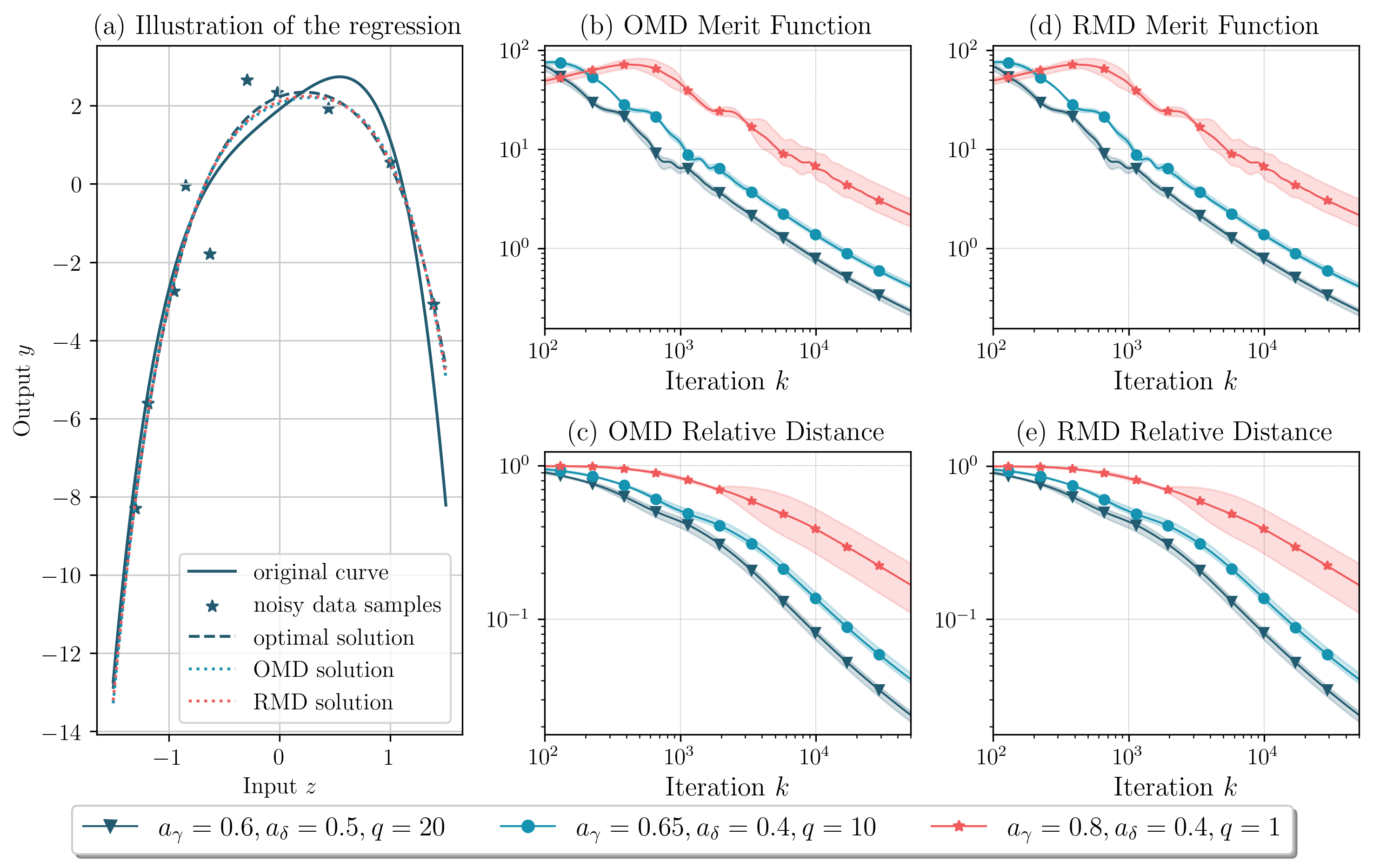}
    \caption{Performance of Algorithms~\ref{alg:optm-mrdesc} and \ref{alg:refl-desc} in LSE of Linear Models}
    \label{fig:linear-est}
\end{figure}

Set step size $\gamma_k = 1/(k+10^4)^{a_\gamma}$ and query radius $\delta_k = 1/(k+10^2)^{a_\delta}$. 
The experiments for OMD and RMD share the same random sample path. 
In Fig.~\ref{fig:linear-est}(a), we plot the original curve to fit, the noisy data samples used, and the two ergodic solutions obtained by OMD and RMD. 
One metric leveraged to measure the performance is the merit function \eqref{eq:merit-func}, and the results are visualized in Fig.~\ref{fig:linear-est}(b) and (d) under three different sets of parameters. 
We use solid lines to illustrate the average metric values of three different runs under the same choices of parameters and the semi-transparent envelope to indicate the true fluctuation. 
The relative distances to the unique optimal solution $x_*$, i.e., $\norm{\check{X}_k - x_*}_2 / \norm{x_*}_2$ are reported in Fig.~\ref{fig:linear-est}(c) and (e). 
We observe in the simulation that the convergence rates in Fig.~\ref{fig:linear-est}(b) and (d) match the results in Theorem~\ref{thm:erg-rate}. 
If the players only have a single observation per iteration, the decaying rate of step size should be increased and that of query radius decreased properly to tackle the estimation variance. 
In addition, we note that when OMD and RMD only differ in the looking-forward updating step, the ergodic results of one resemble those of the other. 

\subsection{Thermal Control in Buildings}\label{sect:thrm-ctrl}

In this example, we consider a load aggregator consisting of $N$ buildings, denoted by $\playerN \coloneqq \{1, \ldots, N\}$. 
Given an internal pricing mechanism defined in \cite{jiang2021game}, we would like to examine the convergence rates of the proposed bandit online NE learning algorithms in strongly pseudo-monotone problems. 
Given a time horizon $\mathcal{T} \coloneqq \{1, \ldots, T\}$, let $x^i_{t}$ denote the power consumption of building $i$ at time slot $t \in \mathcal{T}$, $x^i \coloneqq [x^i_{t}]_{t \in \mathcal{T}}$ the power profile of building $i$ over all the time slots, and $x \coloneqq [x^i]_{i \in \playerN}$ the concatenation of the energy profiles of all players. 

Assume that the aggregator purchases electricity from the wholesale energy market at an energy price of $p_e \in \rset{T}{++}$. 
In addition, there also exists an anytime demand charge rate of $p_d \in \rset{}{++}$ that penalizes the peak electricity usage of the aggregator during the time horizon under consideration. 
The peak electricity usage is characterized by the clique set of the participated buildings, which is denoted by $\mathcal{C} \coloneq \{\mathcal{C}_1, \ldots, \mathcal{C}_{n_c}\}$, where each $\mathcal{C}_j \subseteq \playerN$ for $j = 1, \ldots, n_c$. 
We use $y^i_t$ to denote the temperature of building $i$ during the $t$-th time slot, whose dynamics are described by the controlled LTI system $r^i_{t} = a^i r^i_{t-1} + b^i x^i_{t}$, $y^i_{t} = c^i r^i_{t}$. 
The desirable energy profile should strike a balance between controlling the indoor temperature within a comfortable zone $[\ubar{y}^i_{t}, \bar{y}^i_{t}]$ and reducing the energy cost. 
An additional constraint is that the air-conditioning power is upper-bounded by the system capacity for each building. 
Under this setting, given other buildings' power profile $x^{-i}$, each building aims at identifying an optimal power control strategy, which can be expressed as follows:
\begin{align}
\begin{split}
& \minimize_{x^i \in \mathcal{X}^i} \;(p_e)^Tx^i + Q^i(x^i) + p_d \cdot R^i(x)  \\
& \subj \;r^i_{t} = a^i r^i_{t-1} + b^i x^i_{t}, \forall t \in \mathcal{T} \\
& \qquad \qquad y^i_{t} = c^i r^i_{t}, \forall t \in \mathcal{T} \\
& \qquad \qquad  \ubar{y}^i_{t} \leq y^i_{t} \leq \bar{y}^i_{t}, \forall t \in \mathcal{T} \\
& \qquad \qquad  0 \leq x^i_{t} \leq \bar{x}^{i}, \forall t \in \mathcal{T}. 
\end{split}
\end{align}
Here, $Q^i$ is a strongly convex quadratic function added artificially for the purpose of convergence rates comparison, and $R^i$ denotes building $i$'s share of the aggregator peak demand. 
An approximate version of Shapley value is leveraged to distribute the collective demand charge among the players: 
\begin{align*}
& R^i(x) = \sum_{\mathcal{C}_j: i \in \mathcal{C}_j} \frac{(N - \abs{\mathcal{C}_j})!(\abs{\mathcal{C}_j} - 1)!}{N!}\Big(V(\mathcal{C}_j, x) - V(\mathcal{C}_j\backslash \{i\}, x)\Big), \\
& \text{where} \; V(\mathcal{C}_j, x) = \frac{1}{C}\log \Big( \sum_{t \in \mathcal{T}} \exp\big(\sum_{l \in \mathcal{C}_j} C x^l_{t}\big) \Big) \approx \max_{t \in \mathcal{T}} \Big\{\sum_{l \in \mathcal{C}_j} x^l_{t} \Big\}.
\end{align*}
In the above definition, $C$ is a manually chosen parameter that controls the accuracy of the approximation. 
We also note that the objective functions for this set of players admit a potential function given as follows: 
\begin{align*}
\Phi(x) & = \sum_{i \in \playerN} \Big((p_e)^Tx^i + Q^i(x^i)\Big) \\
& + p_d \cdot \sum_{\mathcal{C}_j \in \mathcal{C}} \frac{(N - \abs{\mathcal{C}_j})!(\abs{\mathcal{C}_j} - 1)!}{N!}V(\mathcal{C}_j, x). 
\end{align*}

Suppose that ten buildings $(N=10)$ participate in this game, and we conduct two sets of simulations where each building $i$ needs to decide its energy strategy regarding $2$ time slots $(T = 2)$ and $4$ time slots $(T = 4)$, respectively. 
The quadratic term $Q^i(x^i) = (x^i)^T\diag{\lambda_{i1}, \ldots, \lambda_{in^i}}x^i$ has each diagonal entry $\lambda_{ij}$ randomly sampled from $[0.04, 0.06]$. 
The metrics that we leverage to measure the performance of the methods are the relative distance between the NE and the perturbed actions, $\norm{\hat{X}_{k+1/2} - x_*}_2 / \norm{x_*}_2$, and the difference between the potential function's optimal value and the values at the perturbed actions, $\Phi(\hat{X}_{k+1/2}) - \Phi_*$. 
The simulation results for $T=2$ are reported in Fig.~\ref{fig:thrm-ctrl2} and those for $T=4$ in Fig.~\ref{fig:thrm-ctrl4}, where we compare the convergence speed of the proposed methods with the existing learning algorithms in \cite{bravo2018bandit,tatarenko2018learning,tatarenko2022rate,lin2021optimal}. 
The step size and query radius are set to be of the form $\gamma_k = \alpha_\gamma/(k + K_\gamma)^{a_\gamma}$ and $\delta_k = \alpha_\delta/(k + K_\delta)^{a_\delta}$, with the specific choices of the power parameters included in the legends of the figures. 
The parameters of the existing algorithms are selected based on the results in \cite[Thm.~5.2]{bravo2018bandit}, \cite[Thm.~3]{tatarenko2018learning}, \cite[Thm.~2]{tatarenko2022rate} and \cite[Thm.~2]{lin2021optimal}, respectively. 
The average of three runs with different random sample paths for each method is illustrated by the solid/dashed line, and the fluctuation is reflected through the semi-transparent envelope. 

From Fig.~\ref{fig:thrm-ctrl2} and \ref{fig:thrm-ctrl4}, it can be observed that Algorithms~\ref{alg:optm-mrdesc} and \ref{alg:refl-desc} can reduce the estimation variance and improve the convergence rates considerably. 
In Fig.~\ref{fig:thrm-ctrl2}, we consider two sets of parameters: Set~(a) that updates more aggressively $(a_{\gamma} = 0.95, a_{\delta} = 0.75)$ and Set~(b) more conservatively $(a_{\gamma} = 0.9, a_{\delta} = 0.6)$. 
For the first $2\times 10^4$ iterations, Set~(b) outperforms Set~(a), possibly due to that Set~(b) enjoys smaller constants $M_1$ and $M_2$ and starting iteration $K$ as stated in Theorem~\ref{thm:strmon-eucl-convgrate}. 
Nevertheless, as the algorithms proceed, Set~(a) outruns Set~(b), matching the decaying rate results in Theorem~\ref{thm:strmon-eucl-convgrate}. 
When we have $T$ increase from $2$ to $4$, the constants $M_1$, $M_2$, and $K$ grow significantly and it is advisable to choose a relatively conservative set of parameters to procure the desirable learning dynamics.

\begin{figure}
    \centering
    \includegraphics[width=\figurewidth\textwidth]{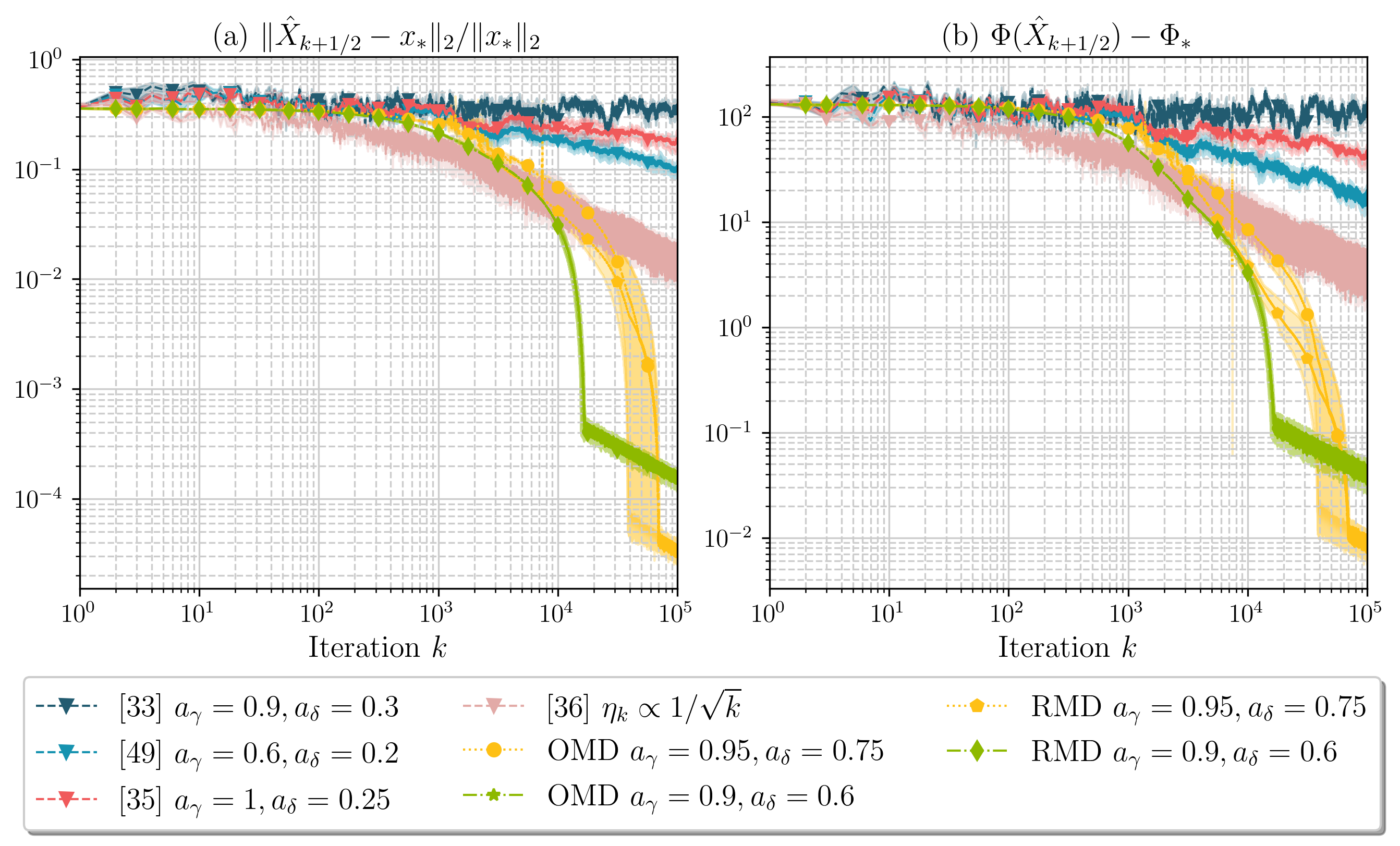}
    \caption{Performance of Algorithms~\ref{alg:optm-mrdesc} and \ref{alg:refl-desc} in Thermal Control Problem ($T=2$)}
    \label{fig:thrm-ctrl2}
\end{figure}

\begin{figure}
    \centering
    \includegraphics[width=\figurewidth\textwidth]{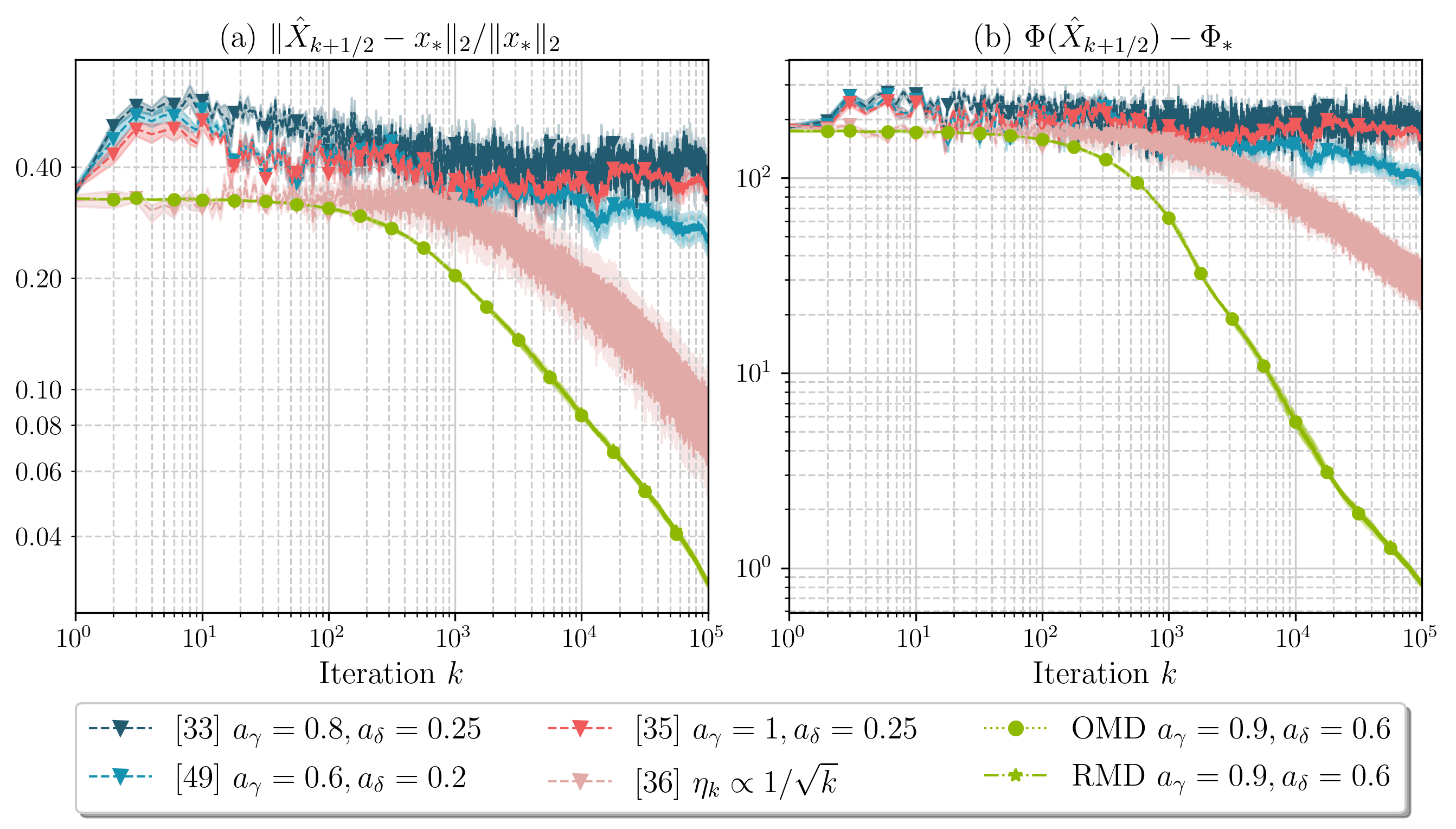}
    \caption{Performance of Algorithms~\ref{alg:optm-mrdesc} and \ref{alg:refl-desc} in Thermal Control Problem ($T=4$)}
    \label{fig:thrm-ctrl4}
\end{figure}

\section{Conclusion and Future Directions}\label{sect:conclu}
In this paper, we study bandit learning in multi-player continuous games and propose two learning algorithms that are constructed by combining the residual pseudogradient estimation and two single-call extra-gradient schemes, i.e., optimistic mirror descent and reflected mirror descent, respectively. 
The actual sequences of play of the proposed algorithms are proven to converge to a critical point of a pseudo-monotone plus games a.s. 
Furthermore, in strongly pseudo-monotone games, the proposed algorithms can achieve an optimal convergence rate of $O(1/t^{1-\epsilon})$, which dramatically ameliorates the convergence speed as learning schemes with a single oracle call. 
There remain several open problems. 
In the problem formulation, we implicitly assume that the realized objective function values are accurate, and it remains an open question to answer how to achieve the same or comparable convergence rate when they are corrupted by random noise. 
Another potential future direction resides in extending the a.s. convergence results for the actual sequence of play to more general classes of games such as merely monotone games without resorting to Tikhonov regularization. 
We intend to address these questions in future work. 

\appendices 
\section*{Appendix}
\addcontentsline{toc}{section}{Appendix}
\renewcommand{\thesubsection}{\Alph{subsection}}

\newtheorem{appdxlemma}{Lemma}
\numberwithin{appdxlemma}{subsection} 
\numberwithin{equation}{subsection} 

\subsection{Preamble}\label{appd:preamble}

\begin{appdxlemma}\label{le:distgen-func}
Consider the ambient Banach space $\mathcal{B}$ equipped with norm $\norm{\cdot}$ and a closed and convex feasible set $\mathcal{X} \subseteq \cl{\dom \psi} \subseteq \mathcal{B}$.
Suppose $\psi: \dom \psi \to \rset{}{}$ is a DGF, then for all $x \in \dom \psi \cap \mathcal{X}$ and $y \in \mathcal{B}^*$, the following relations hold : 
\begin{outline}[enumerate]
\1 $x = \nabla \psi^* (y) \iff y \in \nabla \psi(x) + N_{\mathcal{X}}(x)$;  
\1 The following are equivalent: $x^+ = P_{x, \mathcal{X}}(y) \iff \nabla \psi (x) + y \in \nabla \psi(x^+) + N_{ \mathcal{X}}(x) \iff x^+ = \nabla \psi^*(\nabla \psi (x) + y)$; 
\1 $x = \nabla \psi^* (y)$ and $p \in \mathcal{X} \implies \langle \nabla \psi(x), x - p\rangle \leq \langle y, x-p\rangle$;
\1 The mirror map $\nabla \psi^*$ and the prox-mapping $P_{x, \mathcal{X}}$ are $1/\tilde{\mu}$-Lipschitz continuous, i.e., for any $y_1$ and $y_2 \in \mathcal{B}^*$, 
$\norm{\nabla \psi^*(y_1) - \nabla \psi^*(y_2)} \leq \frac{1}{\tilde{\mu}}\norm{y_1 - y_2}_*$ and 
$\norm{P_{x, \mathcal{X}}(y_1) - P_{x, \mathcal{X}}(y_2)} \leq \frac{1}{\tilde{\mu}}\norm{y_1 - y_2}_*$; 
\1 $\nabla \psi^* \circ \nabla \psi: \dom \psi \cap \mathcal{X} \to \dom \psi \cap \mathcal{X}$ is an identity map. 
\end{outline}
\end{appdxlemma}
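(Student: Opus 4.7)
The strategy is to extract all five claims from a single source: the first-order optimality condition for the concave maximization in \eqref{eq:mirror-map} that defines $\nabla\psi^*$. Once (i) is in hand, parts (ii) and (v) become immediate rewrites, while parts (iii) and (iv) follow by pairing the resulting normal-cone inclusion with the $\tilde\mu$-strong convexity of $\psi$.

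First I would prove (i). The objective $\langle y,\cdot\rangle-\psi(\cdot)$ is proper, concave and differentiable on $\dom\psi$; by the third defining property of a DGF, its gradient blows up on $\partial\dom\psi$, so the argmax over $\mathcal{X}\cap\cl{\dom\psi}$ is attained at a unique point $x$ sitting in the relative interior of $\dom\psi$. Writing the first-order condition in normal-cone form at that $x$ yields precisely $y-\nabla\psi(x)\in N_{\mathcal{X}}(x)$, and concavity of the objective makes this condition sufficient as well. Part (ii) is then a one-line rewrite: expanding \eqref{eq:prox-map} and dropping the $\psi(x)$ and $\langle\nabla\psi(x),x\rangle$ terms that are constant in the decision variable $x'$ reduces the prox-mapping to the form covered by (i) with $y$ replaced by $\nabla\psi(x)+y$; this yields both stated characterizations (and incidentally shows that the normal cone appearing there is $N_{\mathcal{X}}(x^+)$). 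Part (v) is equally immediate: for $x\in\dom\psi\cap\mathcal{X}$ we have $0\in N_{\mathcal{X}}(x)$, so setting $y=\nabla\psi(x)$ in (i) gives $\nabla\psi^*(\nabla\psi(x))=x$.

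For (iii), from $x=\nabla\psi^*(y)$ and (i) the inclusion $y-\nabla\psi(x)\in N_{\mathcal{X}}(x)$ translates to $\langle y-\nabla\psi(x),\,p-x\rangle\le 0$ for every $p\in\mathcal{X}$, which rearranges to the stated inequality. Part (iv) then follows by applying (iii) twice: with $x_\ell=\nabla\psi^*(y_\ell)$ for $\ell\in\{1,2\}$, taking $p=x_2$ in the bound for $x_1$ and $p=x_1$ in the bound for $x_2$ and adding yields $\langle\nabla\psi(x_1)-\nabla\psi(x_2),\,x_1-x_2\rangle\le\langle y_1-y_2,\,x_1-x_2\rangle$. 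The $\tilde\mu$-strong convexity of $\psi$ bounds the left-hand side below by $\tilde\mu\,\|x_1-x_2\|^2$, Cauchy--Schwarz bounds the right-hand side above by $\|y_1-y_2\|_*\|x_1-x_2\|$, and dividing out $\|x_1-x_2\|$ gives the $1/\tilde\mu$ Lipschitz estimate for $\nabla\psi^*$. The corresponding bound for $P_{x,\mathcal{X}}$ follows from the characterization in (ii) because $y\mapsto\nabla\psi(x)+y$ is an isometry on the dual space.

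The only delicate step will be the attainment and interior-location claim underpinning (i): this is where the essential-smoothness hypothesis $\|\nabla\psi(\cdot)\|_*\to\infty$ on $\partial\dom\psi$ is indispensable, since without it the maximizer could in principle drift onto $\partial\dom\psi\cap\mathcal{X}$, where $\nabla\psi$ need not exist and the normal-cone form would be vacuous. Uniqueness of the maximizer (used implicitly when writing $\nabla\psi^*$ as a single-valued map) comes from strict convexity of $\psi$, which is in turn supplied by strong convexity. Once these two points are secured, everything else is mechanical.
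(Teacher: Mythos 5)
Your proposal is correct and follows essentially the same route as the paper's proof: part (i) via the first-order optimality condition for the concave maximization defining $\nabla\psi^*$ (with attainment in $\dom\psi\cap\mathcal{X}$ and uniqueness secured by the DGF properties), (ii) and (v) as direct consequences of (i), (iii) from the normal-cone inclusion, and (iv) by applying (iii) at both points, adding, and combining $\tilde{\mu}$-strong convexity with Cauchy--Schwarz before passing the estimate to $P_{x,\mathcal{X}}$ through (ii). The only cosmetic difference is that the paper justifies restricting the maximizer to $\dom\psi\cap\mathcal{X}$ via the surjectivity $\nabla\psi(\dom\psi)=\rset{n}{}$, whereas you invoke the gradient blow-up on $\partial(\dom\psi)$; both serve the same purpose.
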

\begin{proof}
$(\romannum{1})$ The converse directly follows from the first-order optimality conditions for constrained optimization, i.e., if the zero inclusion $0 \in \nabla \psi(x) - y + N_{\mathcal{X}}(x)$ holds for some $x \in \dom \psi \cap \mathcal{X}$ and $y \in \mathcal{B}^*$, then $x = \argmax_{x \in \mathcal{X}}\{\langle y, x\rangle - \psi(x)\}$. 
Regarding the other direction, the emphasis is on arguing the fact that we can restrict our discussion of the solution from $\mathcal{X} = \cl{\dom\psi} \cap \mathcal{X}$ to $\dom\psi \cap \mathcal{X}$. 
Given an arbitrary $y \in \mathcal{B}^*$, a solution $x$ to the maximization problem in $\nabla \psi^*(y)$ satisfies $0 \in \nabla \psi(x) - y + N_{\mathcal{X}}(x)$. 
Since $\nabla \psi(\dom \psi) = \rset{n}{}$ and $N_{\mathcal{X}}(x) \subseteq \rset{n}{}$ for all $x \in \mathcal{X}$, there always exists an $x^\star \in \dom \psi \cap \mathcal{X}$ to make the zero inclusion above hold. 
Besides, the strong concavity of $-\psi$ indicates that $\nabla \psi^*(y)$ admits the unique solution $x^\star$. 

$(\romannum{2})$ The equivalence is also a straightforward result of the first-order optimality condition discussed in $(\romannum{1})$, and for the same reason, we can claim $x^+ \in \dom \psi \cap \mathcal{X}$. 

$(\romannum{3})$ By noting that $y - \nabla \psi(x) \in N_{\mathcal{X}}(x)$ and the definition of the normal cone, i.e., $\phi \in N_{\mathcal{X}}(x)$ if $\langle \phi, p - x\rangle \leq 0$ for all $p \in \mathcal{X}$, we can reach the relation that $\langle y - \nabla \psi(x), p - x\rangle \leq 0$. 

$(\romannum{4})$ It suffices for us to prove the Lipschitz continuity of $\nabla \psi^*$ and that of $P_{x, \mathcal{X}}(y)$ directly follows from the equivalence given in $(\romannum{2})$.
For $y_1, y_2 \in \mathcal{B}^*$ arbitrary, let $x_1 = \nabla \psi^*(y_1)$ and $x_2 = \nabla \psi^*(y_2)$, and $(\romannum{3})$ gives that 
$\langle \nabla\psi(x_1), x_1 - x_2\rangle \leq \langle y_1, x_1 - x_2 \rangle$ 
and $\langle \nabla\psi(x_2), x_2 - x_1\rangle \leq \langle y_2, x_2 - x_1 \rangle$. 
Combining both yields $\langle \nabla\psi(x_1) - \nabla\psi(x_2), x_1 - x_2 \rangle \leq \langle y_1 - y_2 , x_1 - x_2\rangle$. 
By the Cauchy-Schwarz inequality, $\langle y_1 - y_2 , x_1 - x_2\rangle \leq \norm{y_1 - y_2}_*\cdot\norm{x_1 - x_2}$. 
The conclusion then readily follows from the $\tilde{\mu}$-strongly convexity of $\psi$, i.e., $\langle \nabla\psi(x_1) - \nabla\psi(x_2), x_1 - x_2 \rangle \geq \tilde{\mu} \norm{x_1 - x_2}^2$.

$(\romannum{5})$ Let $x^+ = \nabla \psi^*(\nabla \psi(x))$. Apparently, $\nabla \psi(x) \in \nabla \psi(x^+) + N_{\mathcal{X}}(x^+)$ and $0 \in N_{\mathcal{X}}(x^+)$, which together imply $x^+ = x$ and it is uniquely determined. 
\end{proof}

\begin{appdxlemma}\label{le:stdeq}
Suppose that $\psi$ is a $\tilde{\mu}$-strongly convex DGF on $\mathcal{S}_2 \subseteq \mathcal{S}_1 \subseteq \cl{\dom{\psi}}$ and $D$ its associated Bregman divergence. 
For any $p \in \mathcal{S}_2$ and $x \in \dom{\psi}$, let $x^+_1 = P_{x, \mathcal{S}_1}(y_1)$ and $x^+_2 = P_{x, \mathcal{S}_2}(y_2)$. 
Then the following inequality holds
\begin{align}
\begin{split}\label{eq:stdeq-1}
& D(p, x^+_2) \leq D(p, x) + \langle y_2, x^+_1 - p\rangle + \langle y_1 - y_2, x^+_1 - x^+_2\rangle \\
& \qquad\qquad - D(x^+_2, x^+_1) - D(x^+_1, x) 
\end{split} \\
& \leq D(p, x) + \langle y_2, x^+_1 - p\rangle + \frac{1}{2\tilde{\mu}}\norm{y_2 - y_1}^2_* - \frac{\tilde{\mu}}{2}\norm{x^+_1 - x}^2. \label{eq:stdeq-2}
\end{align}
\begin{proof}
By the "three-point identity" of the Bregman divergence, we can relate $x^+_2$ to $x$ as follows: 
\begin{align*}
D(p, x^+_2) &= D(p, x) - D(x^+_2, x) + \langle \nabla \psi(x^+_2) - \nabla \psi(x), x^+_2 - p\rangle \\
&\overset{(a)}{\leq} D(p, x) - D(x^+_2, x) + \langle y_2, x^+_2 - p \rangle, 
\end{align*}
where in $(a)$, we use the fact that $x^+_2 = \nabla \psi^*(\nabla \psi(x) + y_2)$ on the set $\mathcal{S}_2$ and $p \in \mathcal{S}_2$, which imply $\langle \nabla \psi(x^+_2), x^+_2 - p \rangle \leq \langle \nabla \psi(x) + y_2, x^+_2 - p\rangle$ by Lemma~\ref{le:distgen-func}$(\romannum{3})$. 
Since $x^+_2 \in \mathcal{S}_2 \subseteq \mathcal{S}_1$, again by the similar arguments as above, we have:
\begin{align*}
D(x^+_2, x^+_1) \leq D(x^+_2, x) - D(x^+_1, x) + \langle y_1, x^+_1 - x^+_2\rangle.
\end{align*}
Combining the two inequalities above gives \eqref{eq:stdeq-1}. 
By the lower bound of $D$ inherited from the $\tilde{\mu}$-strong monotonicity of $\psi$, we get $D(x^+_2, x^+_1) \geq \frac{\tilde{\mu}}{2}\norm{x^+_2 - x^+_1}$ and $D(x^+_1, x) \geq \frac{\tilde{\mu}}{2}\norm{x^+_1 - x}^2$. 
In addition, by the Cauchy-Schwarz inequality, $\langle y_1 - y_2, x^+_1 - x^+_2\rangle \leq \frac{1}{2\tilde{\mu}}\norm{y_1 - y_2}^2 + \frac{\tilde{\mu}}{2}\norm{x^+_1 - x^+_2}$. 
Taking all the above into account, we get the desired bound in \eqref{eq:stdeq-2}. 
\end{proof}
\end{appdxlemma}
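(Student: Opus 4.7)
The plan is to derive \eqref{eq:stdeq-1} from two successive applications of the three-point identity for the Bregman divergence combined with the first-order optimality characterizations of the two prox-mappings given in Lemma~\ref{le:distgen-func}, and then to upgrade it to \eqref{eq:stdeq-2} using the $\tilde{\mu}$-strong convexity of $\psi$ together with Young's inequality.

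First I would expand $D(p, x^+_2)$ via the three-point identity $D(a,c) = D(a,b) + D(b,c) + \langle \nabla\psi(b) - \nabla\psi(c),\, a - b\rangle$ with $a=p$, $b=x^+_2$, $c=x$, which rearranges to
$$D(p, x^+_2) = D(p, x) - D(x^+_2, x) + \langle \nabla\psi(x^+_2) - \nabla\psi(x),\, x^+_2 - p\rangle.$$
Since $p \in \mathcal{S}_2$ and $x^+_2 = P_{x, \mathcal{S}_2}(y_2)$, Lemma~\ref{le:distgen-func}$(\romannum{3})$ bounds the trailing inner product by $\langle y_2, x^+_2 - p\rangle$, giving $D(p, x^+_2) \leq D(p, x) - D(x^+_2, x) + \langle y_2, x^+_2 - p\rangle$. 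To introduce $x^+_1$ and $y_1$, I would then split $D(x^+_2, x)$ with a second three-point identity, this time anchored at $(x^+_1, x)$,
$$D(x^+_2, x) = D(x^+_2, x^+_1) + D(x^+_1, x) + \langle \nabla\psi(x^+_1) - \nabla\psi(x),\, x^+_2 - x^+_1\rangle,$$
and invoke Lemma~\ref{le:distgen-func}$(\romannum{3})$ again, applied to $x^+_1 = P_{x, \mathcal{S}_1}(y_1)$ with test point $x^+_2 \in \mathcal{S}_2 \subseteq \mathcal{S}_1$, to bound the cross term from below by $\langle y_1, x^+_2 - x^+_1\rangle$. Substituting this back and rewriting $\langle y_2, x^+_2 - p\rangle = \langle y_2, x^+_1 - p\rangle + \langle y_2, x^+_2 - x^+_1\rangle$ regroups the $y_1$/$y_2$ contributions into $\langle y_1 - y_2, x^+_1 - x^+_2\rangle$ and produces \eqref{eq:stdeq-1}.

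For \eqref{eq:stdeq-2}, I would use the strong-convexity lower bounds $D(x^+_2, x^+_1) \geq \tfrac{\tilde{\mu}}{2}\norm{x^+_1 - x^+_2}^2$ and $D(x^+_1, x) \geq \tfrac{\tilde{\mu}}{2}\norm{x^+_1 - x}^2$, and then absorb the residual mixed term via Young's inequality
$$\langle y_1 - y_2,\, x^+_1 - x^+_2\rangle \leq \frac{1}{2\tilde{\mu}}\norm{y_1 - y_2}^2_* + \frac{\tilde{\mu}}{2}\norm{x^+_1 - x^+_2}^2,$$
so that the positive quadratic cancels exactly against $-D(x^+_2, x^+_1)$, leaving the $\tfrac{1}{2\tilde{\mu}}\norm{y_1-y_2}^2_*$ and $-\tfrac{\tilde{\mu}}{2}\norm{x^+_1-x}^2$ terms on the right-hand side. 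The only point requiring genuine care is the bookkeeping of the two normal-cone inequalities: the optimality of $x^+_2$ only tests directions inside $\mathcal{S}_2$, which is why we need $p \in \mathcal{S}_2$; and the optimality of $x^+_1$ tests directions inside $\mathcal{S}_1$, which is precisely where the nesting hypothesis $\mathcal{S}_2 \subseteq \mathcal{S}_1$ is used in order to legitimize $x^+_2$ as an admissible test point against $x^+_1$. The remaining manipulations are routine.
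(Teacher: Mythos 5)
Your argument is correct and follows essentially the same route as the paper's proof: the three-point identity applied twice, the normal-cone optimality bound of Lemma~\ref{le:distgen-func}$(\romannum{3})$ for each prox-mapping with the appropriate test points ($p \in \mathcal{S}_2$ against $x^+_2$, and $x^+_2 \in \mathcal{S}_2 \subseteq \mathcal{S}_1$ against $x^+_1$), followed by the strong-convexity lower bounds and Young's inequality to cancel the $\frac{\tilde{\mu}}{2}\norm{x^+_1 - x^+_2}^2$ term. Your bookkeeping of the inequality directions and the regrouping into $\langle y_1 - y_2, x^+_1 - x^+_2\rangle$ is exactly what the paper does, so there is nothing to add.
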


\begin{appdxlemma}\label{le:seq-convg}
Consider two non-negative sequences $(\gamma_k)_{k\in \nset{}{+}} \in \rset{\nset{}{}}{}$ and $(\delta_k)_{k \in \nset{}{+}}\in \rset{\nset{}{}}{}$.
Suppose $\lim_{k \to \infty} \gamma_k = 0$ and $\lim_{k \to \infty} \delta_k = 0$. 
In addition, let $\sum_{k \in \nset{}{}} \gamma_k = +\infty$ and $\lim_{k\to\infty} \gamma_k/\delta_k = 0$. 
Then, for any $K \in \nset{}{+}$ fixed, we have $\lim_{k\to\infty}\gamma_{k - K}/\delta_k = 0$. 
\end{appdxlemma}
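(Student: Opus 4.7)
The plan is to exploit the multiplicative identity
\[
\frac{\gamma_{k-K}}{\delta_k} \;=\; \frac{\gamma_{k-K}}{\delta_{k-K}}\cdot\frac{\delta_{k-K}}{\delta_k},
\]
and to bound the two factors separately. The first factor tends to $0$: applying the hypothesis $\lim_{m\to\infty}\gamma_m/\delta_m = 0$ along the shifted index $m = k - K$---legitimate because $K \in \nset{}{+}$ is fixed, so $m \to \infty$ iff $k \to \infty$---gives $\gamma_{k-K}/\delta_{k-K} \to 0$ at once. This reduces the problem to showing that $\delta_{k-K}/\delta_k$ stays uniformly bounded in $k$.

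For the second factor, I would invoke the regularity of $\delta_k$ that is imposed wherever this lemma is applied elsewhere in the paper. Throughout Lemma~\ref{le:bdd-stoch-err} and Theorem~\ref{thm:asconvg} the query radius is taken to be monotonically decreasing, and in all quantitative statements it is of the explicit power-law form $\delta_k = c_\delta/(k + b_\delta)^{a_\delta}$ with $a_\delta \in (0,1)$. For such a regularly varying sequence a one-line computation yields
\[
\frac{\delta_{k-K}}{\delta_k} \;=\; \Big(\frac{k + b_\delta}{k - K + b_\delta}\Big)^{a_\delta} \;\longrightarrow\; 1 \quad\text{as } k\to\infty,
\]
so $\delta_{k-K}/\delta_k \leq C_K$ for some constant $C_K$ and all $k$ large enough. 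Combining the two estimates gives $\gamma_{k-K}/\delta_k \leq C_K\cdot\gamma_{k-K}/\delta_{k-K} \to 0$, which closes the argument.

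The main obstacle is precisely this bounding step for $\delta_{k-K}/\delta_k$: it does not follow from the four listed hypotheses alone, since one can construct monotone sequences $\delta_k \downarrow 0$ compatible with every other assumption (e.g., piecewise-constant plateaus with rare but severe downward jumps) along which the ratio blows up on a subsequence. The proof therefore has to appeal to the additional regularity of $\delta_k$---monotonicity together with slow variation, or equivalently the polynomial form used throughout the main theorems. Once that regularity is granted, the rest of the proof is the routine composition of two limits sketched above, and no further technical difficulty arises.
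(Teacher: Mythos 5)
Your route is genuinely different from the paper's. You factor $\gamma_{k-K}/\delta_k$ multiplicatively as $(\gamma_{k-K}/\delta_{k-K})\cdot(\delta_{k-K}/\delta_k)$, which forces you to control the ratio $\delta_{k-K}/\delta_k$ and, as you note, to import the power-law form of $\delta_k$ from the rate theorems. The paper instead decomposes additively, $\gamma_{k-K}/\delta_k = \gamma_k/\delta_k + (\gamma_{k-K}-\gamma_k)/\delta_k$, observes by telescoping that $\sum_{k\geq K}(\gamma_{k-K}-\gamma_k) = \sum_{k=0}^{K-1}\gamma_k < \infty$, and then argues that because this sum is finite while $\sum_k \gamma_k = \infty$, the increments $\gamma_{k-K}-\gamma_k$ decay faster than $\gamma_k$ in the tail, so the second term inherits the decay of $\gamma_k/\delta_k$. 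The paper's decomposition never touches $\delta_{k-K}/\delta_k$ and so needs no regularity of $\delta_k$ beyond the listed hypotheses; yours buys a completely elementary two-line limit computation, but only after the extra regularity is granted.

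The substantive issue is that, as written, your argument proves a weaker statement than the lemma: the power-law (or slow-variation) assumption on $\delta_k$ is not among the lemma's hypotheses, and the lemma is invoked in Lemma~\ref{le:bdd-stoch-err} and Theorem~\ref{thm:asconvg}, where only monotonicity and the stated limit/summability conditions on $(\gamma_k)$ and $(\delta_k)$ are assumed. So to match the lemma as stated you would have to either bound $\delta_{k-K}/\delta_k$ from the listed hypotheses (which, as you correctly observe, cannot be done in general) or switch to a decomposition that avoids that ratio altogether. To your credit, your diagnosis that the listed hypotheses alone do not suffice is well taken: the paper's own final step, passing from summability of the increments and divergence of $\sum_k\gamma_k$ to the pointwise statement $(\gamma_{k-K}-\gamma_k)/\delta_k \to 0$, is itself only a heuristic, so the gap you flag is present in the paper's argument too rather than being an artifact of your choice of decomposition.
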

\begin{proof}
To study $\gamma_{k-K}/\delta_k$, we note that $\gamma_k/\delta_k + (\gamma_{k-K}-\gamma_k)/\delta_k$, and it suffices to show that the second part converges to zero. 
To do so, we consider $\sum_{k=K}^{\infty}(\gamma_{k-K}-\gamma_k) = \lim_{n\to\infty} \sum_{k=K}^{n}(\gamma_{k-K}-\gamma_k) = \lim_{n\to\infty} (\sum_{k=0}^{K-1} \gamma_k - \sum_{k=n-K+1}^{n}\gamma_k)$ by telescoping. 
Moreover, $\sum_{k=0}^{K-1} \gamma_k$ is merely a constant and $\lim_{n\to\infty}\sum_{k=n-K+1}^{n}\gamma_k = 0$. 
Thus $\sum_{k=K}^{\infty}(\gamma_{k-K}-\gamma_k) < \infty$ and $(\gamma_{k}-\gamma_{k+K})_{k \in \nset{}{}}$ decays faster than $(\gamma_{k})_{k \in \nset{}{}}$ in the tail, from which our claim follows. 
\end{proof}

\subsection{Results in Residual Pseudogradient Estimate}\label{appd:rpg}

\begin{proof}
(Proof of Lemma~\ref{le:unbiased})    
We first note that the R.H.S. can be simplified as follows. 
\begin{align*}
& \expt{}{G^i_k \mid \mathcal{F}_k} = \frac{n^i}{\delta_k}\Bexpt{}{\Big(J^i(\hat{X}_{k+1/2}) - J^i(\hat{X}_{k-1/2})\Big)u^i_k \mid \mathcal{F}_k} \\
& = \frac{n^i}{\delta_k}\expt{}{J^i(\hat{X}_{k+1/2})u^i_k \mid \mathcal{F}_k} - \frac{n^i}{\delta_k}J^i(\hat{X}_{k-1/2})\expt{}{u^i_k} \\
& = \frac{n^i}{\delta_k}\expt{}{J^i(\hat{X}_{k+1/2})u^i_k \mid \mathcal{F}_k}. 
\end{align*}
Next, we are going to prove that $\nabla_{x^i} \Tilde{J}^i_{\delta_k}(\Bar{X}_{k+1/2})$ is a version of $\frac{n^i}{\delta_k}\expt{}{J^i(\hat{X}_{k+1/2})u^i_k \mid \mathcal{F}_k}$. 
The complete expression of $\nabla_{x^i} \Tilde{J}^i_{\delta_k}(\Bar{X}_{k+1/2})$ is given below: 
\begin{align*}
\frac{1}{\mathbb{V}^i}\nabla_{x^i} \int_{\delta_k \mathbb{S}_{-i}}\int_{\delta_k \mathbb{B}_i} J^i(\Bar{X}^i_{k+1/2} + \Tilde{\tau}^i; \Bar{X}^{-i}_{k+1/2} + \tau^{-i})d\Tilde{\tau}^i d\tau^{-i}. 
\end{align*}
To examine the interchangeability of integration and derivative, we check the equation above entry-wisely, i.e., we start by analyzing $\partial_{[x^i]_l}$ for an $l \in \{1, \ldots, n^i\}$ and proceed with other entries similarly. 
Since $J^i$ is assumed to be differentiable in $x^i$, by the mean value theorem, for arbitrary $x \in \mathcal{X}$, 
\begin{align*}
    \frac{1}{\varepsilon} \big(J^i(x + \varepsilon e_l) - J^i(x)\big) =  \partial_{[x^i]_l} J^i(x + \alpha \cdot \varepsilon e_l), 
\end{align*}
where $\epsilon$ denotes a small coefficient; 
$\alpha$ is a proper constant on $[0, 1]$. 
Under the current regularity setup, the partial derivative $\partial_{[x^i]_l}J^i$ is locally bounded with bounds dependent on the point $\bar{X}_{k+1/2}$ and its neighbor we are examining, which further implies 
\begin{align*}
\underset{\delta_k \mathbb{S}_{-i}}{\int}\underset{\delta_k \mathbb{B}_i}{\int} \mid\partial_{[x^i]_l} J^i(\Bar{X}^i_{k+1/2}(\omega) + \Tilde{\tau}^i; \Bar{X}^{-i}_{k+1/2}(\omega) + \tau^{-i})\mid d\Tilde{\tau}^i d\tau^{-i}
< \infty
\end{align*}
for arbitrary possible random sample path $\omega \in \Omega$. 
By the dominated convergence theorem, we can exchange the limit of passing $\varepsilon$ to $0$ and the integration over $\delta_k \mathbb{S}_{-i} \times \delta_k \mathbb{B}_i$ and obtain:
\begin{align*}
& \nabla_{x^i} \Tilde{J}^i_{\delta_k}(\Bar{X}_{k+1/2}) \\
& = \frac{1}{\mathbb{V}^i} \int_{\delta_k \mathbb{S}_{-i}}\int_{\delta_k \mathbb{B}_i} \nabla_{x^i}J^i(\Bar{X}^i_{k+1/2} + \Tilde{\tau}^i; \Bar{X}^{-i}_{k+1/2} + \tau^{-i})d\Tilde{\tau}^i d\tau^{-i} \\
& \overset{(a)}{=} \frac{1}{\mathbb{V}^i} \int_{\delta_k \mathbb{S}_{\mathcal{N}}} J^i(\Bar{X}^i_{k+1/2} + \tau^i; \Bar{X}^{-i}_{k+1/2} + \tau^{-i})\frac{\tau^i}{\norm{\tau^i}} d\tau \\
& \overset{(b)}{=} \frac{n^i/\delta_k}{\vol{\delta_k \mathbb{S}_{\mathcal{N}}}} \int_{\delta_k \mathbb{S}_{\mathcal{N}}}  J^i(\Bar{X}^i_{k+1/2} + \tau^i; \Bar{X}^{-i}_{k+1/2} + \tau^{-i}) \frac{\tau^i}{\delta_k} d\tau\\
& \overset{(c)}{=} \frac{n^i/\delta_k}{\vol{ \mathbb{S}_{\mathcal{N}}}} \int_{\mathbb{S}_{\mathcal{N}}} J^i(\bar{X}_{k+1/2} + \delta_k \tau)\tau^i d{\tau}, 
\end{align*}
where in $(a)$, we let $\mathbb{S}_{\mathcal{N}} \coloneqq \prod_{i \in \mathcal{N}} \mathbb{S}_i$, and the equality follows from the Stoke's theorem \cite[Thm.~9.3.1]{galbis2012vector} since $J^i$ is $C^1$ in $x^i$; 
$(b)$ is a direct results of the fact that $\vol{\delta_k \mathbb{B}_i} = (\delta_k/n_i)\vol{\delta_k \mathbb{S}_i}$; 
for $(c)$, we simply apply the change of variables.
For any event $E \in \mathcal{F}_k$, we can check:
\begin{align*}
& \expt{}{\nabla_{x^i} \Tilde{J}^i_{\delta_k}(\Bar{X}_{k+1/2}) \mathds{1}_E} \\
& = \frac{n^i/\delta_k}{\vol{\mathbb{S}_{\mathcal{N}}}} \cdot \int_{E}\int_{\mathbb{S}_{\mathcal{N}}}  J^i(\Bar{X}_{k+1/2}(\omega) + \delta_k\tau) \tau^i d\tau \mathcal{P}(d\omega) \\
& \overset{(a)}{=} \frac{n^i}{\delta_k} \int_{\Bar{X}^i_{k+1/2}(E)}\int_{\mathbb{S}_{\mathcal{N}}} J^i(x + \delta_k\tau)\tau^i \tilde{\mu}_k(d\tau) \cdot \nu_{k+1/2}(dx)\\
& \overset{(b)}{=} \frac{n^i}{\delta_k} \int_{\Bar{X}^i_{k+1/2}(E) \times \mathbb{S}_{\mathcal{N}}} J^i(x + \delta_k\tau)\tau^i (\tilde{\mu}_k \times \nu_{k+1/2})(dx, d\tau) \\
& \overset{(c)}{=} \frac{n^i}{\delta_k} \int_{E \times \Omega}J^i(\Bar{X}_{k+1/2}(\omega_1) + \delta_ku_k(\omega_2)) u^i_k(\omega_2)(\mathcal{P} \times \mathcal{P})(d\omega_1, d\omega_2) \\
& \overset{(d)}{=} \frac{n^i}{\delta_k}\expt{}{J^i(\bar{X}_{k+1/2} + \delta_ku_k)u^i_k \mathds{1}_E},
\end{align*}
where in $(a)$, we let $\tilde{\mu}_k \coloneqq \mathcal{P} \circ (u_k)^{-1}$ denote the probability measure of $u_k$, which corresponds to the uniform distribution over $\mathbb{S}_{\mathcal{N}}$ and $\nu_{k+1/2}$ the probability measure of $\bar{X}_{k+1/2}$, 
and the equality in $(a)$ follows from the change of variables formula for computing the expected value of $\bar{X}^i_{k+1/2}$. 
To have the relation in $(b)$, we first note that $\Bar{X}^i_{k+1/2}(E) 
 + \delta_k\mathbb{S}_{\mathcal{N}}$ is a subset of $\mathcal{X}_a$, which is bounded by assumption and construction. 
Hence, we have the integrability $\int_{\Bar{X}^i_{k+1/2}(E) 
 \times \mathbb{S}_{\mathcal{N}}} \mid J^i(x + \tau) \mid (\tilde{\mu}_k \times \nu_{k+1/2})(dx, d\tau) \leq \max_{x \in \mathcal{X}} J^i(x) < \infty$, and the Fubini's theorem can be applied here to obtain the equality. 
For $(c)$, we again use the change of variables formula for computing the expected value involving $\bar{X}^i_{k+1/2}$ and $u_k$.
Finally, $(d)$ holds as a result of the fact that $u_k$ is independent of $\bar{X}^i_{k+1/2}$ and $\mathds{1}_E$. 
This finishes the proof that $\nabla_{x^i} \tilde{J}^i_{\delta_k}(\bar{X}_{k+1/2})$ is a version of the conditional expectation $\expt{}{G^i_k \mid \mathcal{F}_k}$ by definition \cite[Sec.~4.1]{durrett2019probability}. 
\end{proof}

\begin{proof}(Proof of Lemma~\ref{le:codom-err})

The systematic error can be separated as follows: 
\begin{align*}
& \norm{B^i_k}_* = \norm{\nabla_{x^i}\tilde{J}^i_{\delta_k}(\bar{X}_{k+1/2}) - \nabla_{x^i}J^i(X_{k+1/2})}_* \leq \\
& \underbrace{\norm{\nabla_{x^i}\tilde{J}^i_{\delta_k}(\bar{X}_{k+\frac{1}{2}}) - \nabla_{x^i}J^i(\bar{X}_{k+\frac{1}{2}})}_*}_{(\romannum{1})} + \underbrace{\norm{\nabla_{x^i}J^i(\bar{X}_{k+\frac{1}{2}})- \nabla_{x^i}J^i(X_{k+\frac{1}{2}})}_*}_{(\romannum{2})}. 
\end{align*}
For $(\romannum{1})$, we start by applying the same arguments in the proof of Lemma~\ref{le:unbiased} to switch the derivative $\nabla_{x^i}$ and the integration for the computation of $\nabla_{x^i}\tilde{J}^i_{\delta_k}(\bar{X}_{k+\frac{1}{2}})$, which gives
\begin{align*}
& (\romannum{1}) = \Bnorm{\frac{1}{\mathbb{V}^i} \int_{\delta_k \mathbb{S}_{-i}}\int_{\delta_k \mathbb{B}_i} \big(\nabla_{x^i}J^i(\Bar{X}^i_{k+1/2} + \Tilde{\tau}^i; \Bar{X}^{-i}_{k+1/2} + \tau^{-i})\\
& \qquad\qquad - \nabla_{x^i}J^i(\bar{X}^i_{k+1/2}; \bar{X}^{-i}_{k+1/2}) \big) d\Tilde{\tau}^i d\tau^{-i}}_* \\
& \overset{(a)}{\leq} \frac{1}{\mathbb{V}^i}\int_{\delta_k \mathbb{S}_{-i}}\int_{\delta_k \mathbb{B}_i}
\bnorm{\nabla_{x^i}J^i(\Bar{X}^i_{k+1/2} + \Tilde{\tau}^i; \Bar{X}^{-i}_{k+1/2} + \tau^{-i}) \\
& \qquad\qquad - \nabla_{x^i}J^i(\bar{X}^i_{k+1/2}; \bar{X}^{-i}_{k+1/2})}_*
d\Tilde{\tau}^i d\tau^{-i} \\
& \overset{(b)}{\leq} \frac{L^i}{\mathbb{V}^i}\int_{\delta_k \mathbb{S}_{-i}}\int_{\delta_k \mathbb{B}_i} \norm{[\tilde{\tau}^i; \tau^{-i}]} d\Tilde{\tau}^i d\tau^{-i} \\
& \overset{(c)}{\leq} L^i\bar{u}_{\mathcal{N}} \cdot \delta_k, 
\end{align*}
where $(a)$ is immediate from the triangle inequality; 
in $(b)$, we use the $L^i$-Lipschitz continuity of $\nabla_{x^i}J^i$; 
finally for $(c)$, we substitute the integrand with the constant $\bar{u}_{\mathcal{N}}$, where $\bar{u}_{\mathcal{N}} \coloneqq \norm{u}$ for some $u \in \mathbb{S}_{\mathcal{N}} \coloneqq \prod_{i \in \mathcal{N}}\mathbb{S}_{i}$. 

An upper bound for $(\romannum{2})$ can be trivially constructed by again using the Lipschitz continuity of $\nabla_{x^i}J^i$ as follows:
\begin{align*}
(\romannum{2}) \leq L^i \cdot \bnorm{\frac{\delta_k}{r^i}(X_{k+1/2} - p)} \leq \frac{L^i D_{\mathcal{X}_a}}{r^i}\delta_k, 
\end{align*}
where $D_{\mathcal{X}_a}$ denotes the diameter of the global action space $\mathcal{X}_a$ measured by the given norm $\norm{\cdot}$. 
The stack of systematic error $\norm{B_k}_*$ then enjoys the bound $\norm{B_k}_* \leq \alpha_B \delta_k$ where $\alpha_B = \sum_{i \in \mathcal{N}}L^i(\bar{u}_{\mathcal{N}}+D_{\mathcal{X}_a}/r^i)$. 

For the stochastic error $V_k$, by the triangle inequality, we have
\begin{align*}
\norm{V^i_k}_* \leq \underbrace{\norm{G^i_k}_*}_{(\romannum{1})} + \underbrace{\norm{\nabla_{x^i} \tilde{J}^i_{\delta_k}(\bar{X}_{k+1/2})}_*}_{(\romannum{2})}. 
\end{align*}
For $(\romannum{1})$, by noting that $J^i$ is continuous on the compact set $\mathcal{X}_a$ and its maximum exists, we obtain $
\norm{G^i_k}_* \leq \frac{n^i}{\delta_k} \cdot 2\max_{x \in \mathcal{X}_a} \mid J^i(x) \mid \cdot \norm{u^i_k}_*$, where $\norm{u^i_k}_*$ is the $\norm{\cdot}_*$ norm of the unit vector sampled from $\mathbb{S}^{n_i}$. 
We let $\bar{u}^i_* \coloneqq \norm{u^i_k}_*$. 
For $(\romannum{2})$, we again leverage the arguments in the proof of Lemma~\ref{le:unbiased} to interchange the partial derivative and the integral and get
\begin{align*}
& \norm{\nabla_{x^i} \tilde{J}^i_{\delta_k}(\bar{X}_{k+1/2})}_* = \frac{n^i/\delta_k}{\vol{ \mathbb{S}_{\mathcal{N}}}}\norm{\int_{\mathbb{S}_{\mathcal{N}}} J^i(\bar{X}_{k+1/2} + \delta_k \tau){\tau}^i d\tau}_* \\
& \leq \frac{n^i/\delta_k}{\vol{ \mathbb{S}_{\mathcal{N}}}} 
\int_{\mathbb{S}_{\mathcal{N}}} \mid J^i(\bar{X}_{k+1/2} + \delta_k \tau) \mid \norm{{\tau}^i}_* d\tau \\
& \leq \frac{n^i}{\delta_k} \cdot \max_{x \in \mathcal{X}_a} \mid J^i(x) \mid \cdot \bar{u}^i_*.
\end{align*}
Altogether, we have that the stack stochastic error vector $\norm{V_k}_* \leq \alpha_V/\delta_k$, where $\alpha_V = \sum_{i \in \mathcal{N}}3n^i\max_{x \in \mathcal{X}_a} \mid J^i(x) \mid \bar{u}^i_*$.
\end{proof}

\begin{proof}
(Proof of Lemma~\ref{le:bdd-stoch-err}) \\
Similarly, we first separate the squared norm, i.e., 
\begin{align*}
\norm{V_k}^2_* \leq 2\norm{G^i_k}^2_* + 2\norm{\nabla_{x^i} \tilde{J}^i_{\delta_k}(\bar{X}_{k+1/2})}^2_*. 
\end{align*}
For the second part, a constant upper bound can be procured: 
\begin{align*}
& \norm{\nabla_{x^i} \Tilde{J}^i_{\delta_k}(\Bar{X}_{k+1/2})}^2_* \\
& \overset{(a)}{\leq} \frac{1}{\mathbb{V}^i} \int_{\delta_k \mathbb{S}_{-i}}\int_{\delta_k \mathbb{B}_i} \norm{\nabla_{x^i}J^i(\Bar{X}^i_{k+1/2} + \Tilde{\tau}^i; \Bar{X}^{-i}_{k+1/2} + \tau^{-i})}^2_* d\Tilde{\tau}^i d\tau^{-i} \\
& \overset{(b)}{\leq} \big(\max_{x \in \mathcal{X}_a} \norm{\nabla_{x^i}J^i(x)}^2_*\big) \cdot \frac{1}{\mathbb{V}^i}\int_{\delta_k \mathbb{S}_{-i}}\int_{\delta_k \mathbb{B}_i} 1 d\Tilde{\tau}^i d\tau^{-i} \\
& = \max_{x \in \mathcal{X}_a} \norm{\nabla_{x^i}J^i(x)}^2_*
\end{align*}
where we obtain $(a)$ by applying the triangle and Jensen's inequality; 
since $\norm{\nabla_{x^i}J^i(\cdot)}^2_*$ by assumption is a continuous function on $\mathcal{X}_a$, the maximum exists in $(b)$ and hence it admits a constant upper bound. 
For the first part, we leverage the differentiability of $J^i$ in $x$ and the mean value theorem to obtain
\begin{align*}
& \norm{G^i_k}^2_* \overset{(a)}{=} \big(\frac{n^i}{\delta_k}\big)^2\Big( \langle \nabla_{x}J^i(\Tilde{X}), \hat{X}_{k+1/2} - \hat{X}_{k-1/2}\rangle \Big)^2\norm{u^i_k}^2_* \\
& \overset{(b)}{\leq} \big(\frac{n^i}{\delta_k}\big)^2 \norm{\nabla_{x}J^i(\tilde{X})}^2_* \cdot \norm{\hat{X}_{k+1/2} - \hat{X}_{k-1/2}}^2 \cdot \norm{u^i_k}^2_* \\
& \overset{(c)}{\leq} \big(\frac{n^i}{\delta_k}\big)^2\bar{\nabla}^2_i(\bar{u}^i_*)^2 \cdot \norm{\hat{X}_{k+1/2} - \hat{X}_{k-1/2}}^2. 
\end{align*}
In $(a)$, $\tilde{X}$ is some convex combination of $\hat{X}_{k+1/2}$ and $\hat{X}_{k-1/2}$, which is still random sample $\omega$ dependent. 
Nevertheless, we can find a constant upper bound for the dual norm of gradient since $\tilde{X}(\omega) \in \mathcal{X}_a$ for all $\omega \in \Omega$, as given in $(b)$. 
We denote $\bar{\nabla}^2_i \coloneqq \max_{x \in \mathcal{X}_a} \norm{\nabla_{x}J^i(x)}^2_*$ in $(c)$. 
For brevity, let $C_g \coloneqq (\sum_{i \in \mathcal{N}} n^i\bar{\nabla}_i\bar{u}^i_*)^2$ and $\norm{G_k}^2_* \leq C_g/(\delta_k)^2 \cdot \norm{\hat{X}_{k+1/2} - \hat{X}_{k-1/2}}^2$. 

We next investigate the property of $\norm{\hat{X}_{k+1/2} - \hat{X}_{k-1/2}}^2$:
\begin{align*}
& \norm{\hat{X}_{k+1/2} - \hat{X}_{k-1/2}}^2 = \norm{X_{k+1/2} - X_{k-1/2} + \\
& \quad \delta_k \underbrace{R^{-1} (p - X_{k+1/2} + Ru_k)}_{\varphi_k} - \delta_{k-1} \underbrace{R^{-1}(p - X_{k-1/2} + Ru_{k-1})}_{\varphi_{k-1}}}^2 \\
& \leq (1+\alpha_1)\norm{X_{k+1/2} - X_{k-1/2}}^2 + (1+\frac{1}{\alpha_1})\delta_k^2\norm{\varphi_k - \frac{\delta_{k-1}}{\delta_k}\varphi_{k-1}}^2 \\
& \leq (1+\alpha_1)\norm{X_{k+1/2} - X_{k-1/2}}^2 + (1+\frac{1}{\alpha_1})\delta_k^2 \bar{\varphi}^2, 
\end{align*}
where $p \coloneqq [p_i]_{i \in \mathcal{N}}$, 
$R \coloneqq \blkd{\{r^iI_{n^i}\}_{i \in \mathcal{N}}}$, 
some arbitrary constant $\alpha_1 > 0$, 
and $\bar{\varphi}^2$ denotes a constant upper bound for $\norm{\varphi_k - \frac{\delta_{k-1}}{\delta_k}\varphi_{k-1}}^2$. 
For OMD, stacking across all players, the iterations suggested by Algorithm~\ref{alg:optm-mrdesc} consist of the following two main updating steps:
\begin{align}\label{eq:compact-omd}
X_{k+1/2} = P_{X_k, \mathcal{X}}(-\gamma_kG_{k-1}), \; X_{k+1} = P_{X_k, \mathcal{X}}(-\gamma_kG_{k}).  
\end{align}
So, we get the following inequalities: 
\begin{align*}
& \norm{X_{k+1/2} - X_{k-1/2}}^2 = \norm{X_{k+1/2} - X_{k} + X_{k} - X_{k-1/2}}^2 \\
& \leq 2\norm{X_{k+1/2} - X_{k}}^2 + 2\norm{X_{k} - X_{k-1/2}}^2 \\
& \overset{(a)}{\leq} 2\norm{\nabla\psi^*(\nabla \psi(X_k) - \gamma_k G_{k-1}) - \nabla \psi^*(\nabla \psi(X_k))}^2 + \\
& 2\norm{\nabla\psi^*(\nabla \psi(X_{k-1}) - \gamma_{k-1} G_{k-1}) - \nabla\psi^*(\nabla \psi(X_{k-1}) - \gamma_{k-1} G_{k-2})}^2\\
& \overset{(b)}{\leq} \frac{2}{\tilde{\mu}^2}\gamma_k^2\norm{G_{k-1}}^2_* + \frac{2}{\tilde{\mu}^2}\gamma_{k-1}^2 \norm{G_{k-1} - G_{k-2}}^2_* \\
& \leq \frac{6}{\tilde{\mu}^2}\gamma_{k-1}^2\norm{G_{k-1}}^2_* + \frac{4}{\tilde{\mu}^2}\gamma_{k-1}^2\norm{G_{k-2}}^2_*. 
\end{align*}
where in the above equation, we disregard specifying the feasible set since in OMD, two prox-mappings are all regarding the strategy space $\mathcal{X}$; 
$(a)$ and $(b)$ is are direct results of applying Lemma~\ref{le:distgen-func} $(\romannum{2})$, $(\romannum{5})$, and the Lipschitz continuity in $(\romannum{4})$. 
Combining with the above yields, for all $k \geq 2$, 
\begin{align*}
& \norm{G_k}^2_* \leq \frac{C_g}{\delta_k^2}\Big((1+\alpha_1)\norm{X_{k+1/2} - X_{k-1/2}}^2 + (1+\frac{1}{\alpha_1})\delta_k^2\bar{\varphi}^2\Big) \\
& \leq \frac{2C_g(1+\alpha_1)}{\tilde{\mu}^2}\Big(\frac{\gamma_{k-1}}{\delta_k}\Big)^2 \big(3\norm{G_{k-1}}^2_* + 2\norm{G_{k-2}}^2_*\big) + C_g(1+\frac{1}{\alpha_1})\bar{\varphi}^2. 
\end{align*}
Since $\lim_{k \to \infty} \gamma_{k-1}/\delta_k = 0$ by Lemma~\ref{le:seq-convg}, for an arbitrary $\varepsilon > 0$, there exists a constant index $K$ such that for all $k > K$, $\frac{2C_g(1+\alpha_1)}{\tilde{\mu}^2}\Big(\frac{\gamma_{k-1}}{\delta_k}\Big)^2 < \varepsilon$ and $\norm{G_k}^2_* \leq 3\varepsilon \norm{G_{k-1}}^2_* + 2\varepsilon \norm{G_{k-2}}^2_* + C_{\bar{g}}$, with $C_{\bar{g}} \coloneqq C_g(1+\frac{1}{\alpha_1})\bar{\varphi}^2$. 
For $k > K$, by the Jury's test and the characteristic polynomial of the linear discrete-time system above $Q(\lambda) = \lambda^2 - 3\varepsilon\lambda - 2\varepsilon$, the system is stable if $Q(1) > 0$, $Q(-1) > 0$, and $\mid -2\varepsilon \mid < 1$, which together imply that it suffices to have $\varepsilon < 1/5$. 
For $k \leq K$, $\norm{G_k}_* \leq \norm{F(X_{k+1/2})}_* + \alpha_B\delta_k + \alpha_V/(\delta_K)^2$. 
Consequently, we have $\sup_{k \in \nset{}{+}}\norm{G_k}^2_* < \infty$, and there exists a constant $C_V$ such that $\sup_{k \in \nset{}{+}}\norm{V_k}^2_* \leq C_V$.

For RMD, the compact formulation of Algorithm~\ref{alg:refl-desc} can be written as
\begin{align}\label{eq:compact-rmd}
\begin{split}
& X_{k+1/2} = P_{X_{k}, \rset{n}{}}(-(\nabla \psi(X_{k-1}) - \nabla \psi(X_k))), \\
& X_{k+1} = P_{X_{k}, \mathcal{X}}(-\gamma_k G_k). 
\end{split}
\end{align}
Since the two prox-mappings in RMD are implemented regarding two different sets, we consider a looser upper bound for $\norm{X_{k+1/2} - X_{k-1/2}}^2$ compared with that of OMD: 
\begin{align*}
& \norm{X_{k+1/2} - X_{k-1/2}}^2  = \norm{X_{k+1/2} - X_{k} + X_{k} - X_{k-1} + X_{k-1} -  X_{k-1/2}}^2\\
& = 4\norm{X_{k+1/2} - X_{k}}^2 + 4\norm{X_{k} - X_{k-1}}^2 + 2\norm{X_{k-1} -  X_{k-1/2}}^2\\
& \overset{(a)}{\leq} (\tilde{L}/\tilde{\mu})^2 \cdot (8\norm{X_{k} - X_{k-1}}^2 + 2\norm{X_{k-1} -  X_{k-2}}^2), 
\end{align*}
where $(a)$ follows from the fact that the global distance generating function $\psi$ enjoys $\tilde{\mu}$-strong monotonicity and $\tilde{L}$-smoothness.
We then use the second update in the reflected gradient to get:
\begin{align*}
\norm{X_{k} - X_{k-1}}^2 \leq \frac{\gamma_{k-1}^2}{\tilde{\mu}^2}\norm{G_{k-1}}^2_*, 
\norm{X_{k-1} - X_{k-2}}^2 \leq \frac{\gamma_{k-2}^2}{\tilde{\mu}^2}\norm{G_{k-2}}^2_*.
\end{align*}
By substituting $\norm{\hat{X}_{k+1/2} - \hat{X}_{k-1/2}}^2$ in the upper bound for $\norm{G_k}^2_*$, we can obtain the following relation: 
\begin{align*}
\norm{G_k}^2_* \leq \frac{2C_g\tilde{L}^2(1+\alpha_1)}{\tilde{\mu}^4}\Big(\frac{\gamma_{k-2}}{\delta_k}\Big)^2\big(4\norm{G_{k-1}}^2_* + \norm{G_{k-2}}^2_*\big) + C_{\bar{g}}.  
\end{align*}
Similarly, by Lemma~\ref{le:seq-convg}, $\lim_{k \to \infty}\gamma_{k-2} / \delta_k = 0$, for an arbitrary $\varepsilon > 0$.
Hence, there exists a constant index $K$ such that for all $k > K$, $\frac{2C_gL^2(1+\alpha_1)}{\tilde{\mu}^4}\Big(\frac{\gamma_{k-2}}{\delta_k}\Big)^2 < \varepsilon$, and $\norm{G_k}^2_* \leq 4\varepsilon \norm{G_{k-1}}^2_* + \varepsilon \norm{G_{k-2}}^2_* + C_{\bar{g}}$. 
Likewise, by Jury's stability criterion and the characteristic polynomial $Q(\lambda) = \lambda^2 - 4\varepsilon\lambda - \varepsilon$, the condition $\varepsilon < 1/5$ can ensure that $Q(1) > 0$, $Q(-1) > 0$, and $\mid -\varepsilon \mid < 1$. 
The remaining arguments are the same as those in OMD, and we arrive at the desired conclusion that for RMD, there exists a constant $C_V$ such that $\sup_{k \in \nset{}{+}}\norm{V_k}^2_* \leq C_V$. 
\end{proof}

\subsection{Almost-Sure Convergence of the Proposed Algorithms in Pseudo-Monotone Plus Games}\label{appd:as-convg}

\begin{proof}
(Proof of Theorem~\ref{thm:asconvg}) \\
\textbf{Algorithm~\ref{alg:optm-mrdesc}.} 
Combining the compact formulations for OMD in \eqref{eq:compact-omd} and \eqref{eq:stdeq-2} in Lemma~\ref{le:stdeq}, we procure the following recurrent relation:
\begin{align*}
D(p, X_{k+1}) &\leq D(p, X_{k}) - \gamma_k\langle G_{k}, X_{k+1/2} - p\rangle \\
& \qquad + \frac{\gamma_k^2}{2\tilde{\mu}}\norm{G_{k} - G_{k-1}}^2_* - \frac{\tilde{\mu}}{2}\norm{X_{k+1/2} - X_{k}}^2,
\end{align*}
for arbitrary $p \in \mathcal{X}$. 
Recall that under the current choices of parameters \eqref{eq:merelm-params}, it has been proved in Lemma~\ref{le:bdd-stoch-err} that $\norm{G_k}_*$ is bounded for all $k \in \nset{}{}$. 
Hence, $\frac{\gamma_k^2}{2\Tilde{\mu}}\norm{G_k - G_{k-1}}^2_* \leq \frac{\gamma_k^2}{\Tilde{\mu}}\norm{G_k}^2_* + \frac{\gamma_k^2}{\Tilde{\mu}}\norm{G_{k-1}}^2_*$ decays at the same rate as $\gamma_k^2$, and we can infer that it is summable. 
Also note that, given the $\sigma$-field $\mathcal{F}_k \coloneqq \sigma\{X_0, u_1, \ldots, u_{k-1}\}$, $X_{k+1}$, $G_k$, and $V_k$ are the only three variables from above that are not $\mathcal{F}_k$-measurable. 
Thus, taking the conditional expectation $\expt{}{\cdot \mid \mathcal{F}_k}$ yields:
\begin{align*}
& \expt{}{D(p, X_{k+1}) \mid \mathcal{F}_k} \overset{(a)}{\leq} D(p, X_{k}) - \gamma_k\langle F(X_{k+1/2}), X_{k+1/2} - p \rangle \\
& \qquad + \gamma_k \norm{B_k}_* \cdot \norm{X_{k+1/2} - p} - \frac{\Tilde{\mu}}{2} \norm{X_{k+1/2} - X_{k}}^2  \\
& \qquad + \frac{\gamma_k^2}{\Tilde{\mu}}\expt{}{\norm{G_k}^2_* \mid \mathcal{F}_k} + \frac{\gamma_k^2}{\Tilde{\mu}}\norm{G_{k-1}}^2_* \\
\begin{split}
& \overset{(b)}{\leq} D(p, X_{k}) - \gamma_k\langle F(X_{k+1/2}), X_{k+1/2} - p \rangle - \frac{\Tilde{\mu}}{2} \norm{X_{k+1/2} - X_{k}}^2 \\
& \qquad + \alpha_B D_{\mathcal{X}}\gamma_k\delta_k + \frac{\gamma_k^2}{\Tilde{\mu}}\expt{}{\norm{G_k}^2_* \mid \mathcal{F}_k} + \frac{\gamma_k^2}{\Tilde{\mu}}\norm{G_{k-1}}^2_*, 
\end{split}
\stepcounter{equation}\tag{\theequation}\label{eq:optm-bd}
\end{align*}
where $(a)$ is a result of the decomposition $G_k = F(X_{k+1/2}) + B_k + V_k$, 
the equality that $\expt{}{\langle V_k, X_{k+1/2} - p\rangle \mid \mathcal{F}_k} = \langle \expt{}{V_k \mid \mathcal{F}_k}, X_{k+1/2} - p\rangle = 0$, and applying the Cauchy-Schwarz inequality to $\langle B_k, X_{k+1/2} - p\rangle$; 
for $(b)$, we use the fact that $X_k, X_{k-1/2} \in \mathcal{X}$ and the ranges of random variables $\norm{B_k}_*$ and $\norm{V_k}^2_*$ satisfy $\norm{B_k}_* \leq \alpha_B \delta_k$ and $\norm{V_k}^2_* \leq C_V$, respectively. 

We then replace $p$ with an arbitrary critical point $x_*$ of the game $\mathcal{G}$ under study, and hence $\langle F(X_{k+1/2}), X_{k+1/2} - x_*\rangle \geq 0$ for all possible values of $X_{k+1/2}$ by pseudomonotonicity. 
Using the Robbins-Siegmund (R-S) Theorem \cite[Thm.~1]{robbins1971convergence}, we can conclude with the following claims: 
\begin{outline}[enumerate]
\1 $(D(x_*, X_k))_{k \in \nset{}{}}$ converges to an a.s. finite limit;
\1 $\sum_{k \in \nset{}{+}} \Tilde{\mu}/2\norm{X_{k+1/2} - X_{k}}^2 < \infty$ a.s.;
\1 $\sum_{k \in \nset{}{}} \gamma_k\langle F(X_{k+1/2}), X_{k+1/2} - x_* \rangle < \infty$ a.s. 
\end{outline}
Claim $(\romannum{2})$ suggests that $\lim_{k \to \infty}\norm{X_{k+1/2} - X_{k}}^2 = 0$ a.s. 
Moreover, since $(\gamma_k)_{k \in \nset{}{+}}$ is not summable we obtain $\liminf_{k \to \infty} \langle F(X_{k+1/2}), X_{k+1/2} - x_*\rangle (\omega) = 0$
from claim $(\romannum{3})$, for any $\omega \in \tilde{\Omega}$ where $\tilde{\Omega} \subseteq \Omega$ has probability one. 
In other words, along a subsequence $(k_m)_{m \in \nset{}{}} \subseteq \nset{}{}$, we have $\langle F(X_{k_m + 1/2}), X_{k_m+1/2} - x_*\rangle(\omega) \to 0$. 
Since $(X_{k}(\omega))_{k \in \nset{}{}} \in \mathcal{X}$ is a bounded sequence, there exists a subsubsequence with $(\ell_m)_{m \in \nset{}{}} \subseteq (k_m)_{m \in \nset{}{}}$ such that $(X_{\ell_m}(\omega))_{m \in \nset{}{}}$ converges to a point $X^\dagger(\omega) \in \mathcal{X}$. 
Claim $(\romannum{2})$ further implies that $X_{\ell_m + 1/2}(\omega) \to X^\dagger(\omega)$. 
By the continuity of $F$, we have:
\begin{align*}
& \lim_{m\to\infty} \langle F(X_{\ell_m+1/2}(\omega)), X_{\ell_m+1/2}(\omega) - x_* \rangle \\
& = \langle \lim_{m\to\infty}F(X_{\ell_m+1/2}(\omega)), \lim_{m\to\infty}(X_{\ell_m+1/2}(\omega) - x_*) \rangle \\
& = \langle F(X^\dagger(\omega)), X^\dagger(\omega) - x_* \rangle = 0
\end{align*}
Since $x_*$ is a solution of the associated VI, we have $\langle F(x_*), X^\dagger(\omega) - x_* \rangle \geq 0$. 
Combining the above results, by the assumed pseudo-monotone plus property of $F$, this implies that $F(x_*) = F(X^\dagger(\omega))$, which further suggests $\forall x \in \mathcal{X}$:
\begin{align*}
& \langle F(X^\dagger(\omega)), x - X^\dagger(\omega)\rangle = \langle F(X^\dagger(\omega)), x - x_* + x_* - X^\dagger(\omega)\rangle \\
& = \langle F(x_*), x - x_*\rangle + \langle F(X^\dagger(\omega)), x_* - X^\dagger(\omega) \rangle \geq 0, 
\end{align*}
i.e., $X^\dagger(\omega)$ is a solution to the associated VI and hence a critical point of the original game under study. 
We can then replace $x_*$ in the recurrent inequality with $X^\dagger(\omega)$. 
Combining it with the fact that there exists a subsequence $(X_{\ell_m})_{m \in \nset{}{}}$ such that $D(X^\dagger(\omega), X_{\ell_m}(\omega)) \to 0$ by the Bregman reciprocity assumption and the fact that $(D(X^\dagger(\omega), X_{\ell_m}(\omega)))_{k \in \nset{}{}}$ admits a finite limit, we can conclude that $D(X^\dagger(\omega), X_{k}(\omega)) \to 0$ for the whole sequence and hence $X_{k}(\omega) \to X^\dagger(\omega)$. 
Finally, recall that $\hat{X}_{k+1/2} = (1 - \delta_kR^{-1})X_{k+1/2} + \delta_kR^{-1}(p + Ru_{k})$ with $p \coloneqq [p_i]_{i \in \mathcal{N}}$ and $R \coloneqq \blkd{\{r^iI_{n^i}\}_{i \in \mathcal{N}}}$, and thus we have $\hat{X}_{k+1/2} \overset{\text{a.s.}}{\to} X_{k+1/2}$ as $\delta_k \to 0$, which completes our proof of a.s. convergence of the sequence of realized actions $(\hat{X}_{k+1/2})_{k \in \nset{}{+}}$. 
\\\\
\textbf{Algorithm~\ref{alg:refl-desc}.} Recall the compact formulation for RMD in \eqref{eq:compact-rmd}. 
For an arbitrary $p \in \mathcal{X}$, substituting the variables in \eqref{eq:stdeq-2} with the primal and dual variables above yields:
\begin{align}\label{eq:rfltdesc-stdineq}
\begin{split}
D(p, X_{k+1}) & \leq D(p, X_{k}) - \gamma_k \langle G_k, X_{k+1/2} - p\rangle - \frac{\tilde{\mu}}{2}\norm{X_{k+1/2} - X_{k}}^2 \\
& + \frac{1}{2\tilde{\mu}} \norm{-\gamma_k G_k + (\nabla \psi(X_{k-1}) - \nabla \psi(X_k))}^2_*. 
\end{split}
\end{align}
To apply the R-S theorem and prove the a.s. convergence, we dissect the last dual norm from the above inequality and show that its summability can be well-controlled by the step size $\gamma_k$: 
\begin{align*}
& \frac{1}{2\tilde{\mu}} \norm{-\gamma_k G_k + (\nabla \psi(X_{k-1}) - \nabla \psi(X_k))}^2_* \\
& \leq \frac{(\gamma_k)^2}{\tilde{\mu}}\norm{G_k}^2_* + \frac{1}{\tilde{\mu}}\norm{\nabla \psi(X_{k-1}) - \nabla \psi(X_k)}^2_* \\
& \overset{(a)}{\leq} \frac{(\gamma_k)^2}{\tilde{\mu}}\norm{G_k}^2_* + \frac{\Tilde{L}^2}{\tilde{\mu}}\norm{X_{k-1} - X_k}^2 \overset{(b)}{=} \frac{(\gamma_k)^2}{\tilde{\mu}}\norm{G_k}^2_* \\
&  \qquad + \frac{\Tilde{L}^2}{\tilde{\mu}}\norm{\nabla \psi^*(\nabla \psi(X_{k-1}) - \gamma_{k-1}G_{k-1}) - \nabla \psi^*(\nabla \psi(X_{k-1}))}^2 \\
& \overset{(c)}{\leq} \frac{(\gamma_k)^2}{\tilde{\mu}}\norm{G_k}^2_* + \frac{(\Tilde{L}\gamma_{k-1})^2}{\tilde{\mu}^3}\norm{G_{k-1}}^2_*,
\end{align*}
where $(a)$ directly follows from the $\tilde{L}$-Lipschitz continuity of $\nabla \psi$; 
in $(b)$, we expand the expression of $X_k$ in terms of the mirror map $\nabla \psi^*$ by applying Lemma~\ref{le:distgen-func}$(\romannum{3})$ and note that for arbitrary $x \in \mathcal{X}$, $(\nabla \psi^* \circ \nabla \psi) (x) = x$; $(c)$ is the result of $(1/\tilde{\mu})$-Lipschitz continuity of $\nabla \psi^*$. 
Recall the decomposition $G_k = F(X_{k+1/2}) + V_k + B_k$ and the boundedness of $\norm{G_k}_*$ for all $k \in \nset{}{+}$ as proved in Lemma~\ref{le:bdd-stoch-err}. 
The summability of the last dual norm in \eqref{eq:rfltdesc-stdineq} inherits from that of $(\gamma_k^2)_{k \in \nset{}{+}}$. 
With the filtration defined as $\mathcal{F}_k \coloneqq \sigma\{X_0, u_1, \ldots, u_{k-1}\}$, we take the conditional expectation $\expt{}{\cdot \mid \mathcal{F}_k}$ on both sides of \eqref{eq:rfltdesc-stdineq} and obtain:
\begin{align}\label{eq:rfltdesc-cond-stdineq}
\begin{split}
& \expt{}{D(p, X_{k+1}) \mid \mathcal{F}_k} \leq D(p, X_{k}) - \gamma_k \langle F(X_{k+1/2}), X_{k+1/2} - p\rangle \\
& \qquad - \tilde{\mu}/2\cdot\norm{X_{k+1/2} - X_k}^2 + \gamma_k\norm{B_k}_*\norm{X_{k+1/2} - p} \\
& \qquad + (\gamma_k)^2/\tilde{\mu} \cdot \expt{}{\norm{G_k}^2_* \mid \mathcal{F}_k} + (\Tilde{L}\gamma_{k-1})^2/\tilde{\mu}^3 \cdot \norm{G_{k-1}}^2_*.
\end{split}
\end{align}
Yet, for the inner product between $F(X_{k+1/2})$ and $X_{k+1/2} - p$, $X_{k+1/2} \in \mathcal{X}_R$ and can sit outside $\mathcal{X}$. To leverage the regularity in Definition~\ref{def:cps} and ensure that the inner product is positive, it is easier for us to work with $\langle F(X_k), X_k - p\rangle$ instead, where $X_k \in \mathcal{X}$. 
To this end, we derive an upper bound for the inner product in \eqref{eq:rfltdesc-cond-stdineq} as follows:
$
 - \gamma_k \langle F(X_{k+1/2}), X_{k+1/2} - p\rangle = -\gamma_k\langle F(X_k), X_k - p\rangle 
 + \gamma_k \langle F(X_{k+1/2}) - F(X_{k}), p\rangle + \gamma_k\langle F(X_k), X_k\rangle - \gamma_k\langle F(X_{k+1/2}), X_{k+1/2}\rangle 
 \leq  -\gamma_k\langle F(X_k), X_k - p\rangle + \gamma_kL\norm{X_{k+1/2} - X_{k}}\cdot\norm{p} + \gamma_k\langle F(X_k) - F(X_{k+1/2}), X_k\rangle - \gamma_k\langle F(X_{k+1/2}), X_{k+1/2} - X_k\rangle \leq -\gamma_k\langle F(X_k), X_k - x_*\rangle + \Delta^\prime_k
$, where here $\Delta^\prime_k \coloneqq \gamma_k\norm{X_{k+1/2} - X_k}\cdot(L\norm{p} + L\norm{X_k} + \norm{F(X_{k+1/2})}_*)$.
By combining the relation $\norm{X_{k+1/2} - X_{k}} \leq \frac{\Tilde{L}}{\Tilde{\mu}}\norm{X_{k} - X_{k-1}} \leq \frac{\Tilde{L}\gamma_{k-1}}{\Tilde{\mu}}\norm{G_{k-1}}_*$ and the boundedness of $(L\norm{x_*} + L\norm{X_k} + \norm{F(X_{k+1/2})}_*)$, we claim that $\Delta^\prime_k$ is summable and obtain the following recurrent inequality
\begin{align}\label{eq:rfltdesc-cond-stdineq2}
\begin{split}
& \expt{}{D(p, X_{k+1}) \mid \mathcal{F}_k} \leq D(p, X_{k}) - \gamma_k \langle F(X_{k}), X_{k} - p\rangle \\
& \qquad - \tilde{\mu}/2\cdot\norm{X_{k+1/2} - X_k}^2 + \Delta^\prime_k + \alpha_B D_{\mathcal{X}}\gamma_k\delta_k \\
& \qquad + (\gamma_k)^2/\tilde{\mu} \cdot \expt{}{\norm{G_k}^2_* \mid \mathcal{F}_k} + (\Tilde{L}\gamma_{k-1})^2/\tilde{\mu}^3 \cdot \norm{G_{k-1}}^2_*.
\end{split}
\end{align}
Once again, we substitute a critical point $x_* \in \mathcal{X}$ for $p$ and arrive at the following claims by applying the R-S theorem:
\begin{outline}[enumerate]
\1 $(D(x_*, X_k))_{k \in \nset{}{}}$ converges to an a.s. finite limit;
\1 $\sum_{k \in \nset{}{+}} \tilde{\mu}/2\norm{X_{k+1/2} - X_{k}}^2 < \infty$ a.s.;
\1 $\sum_{k \in \nset{}{}} \gamma_k\langle F(X_{k}), X_{k} - x_* \rangle < \infty$ a.s. 
\end{outline}
Under the assumption that the operator $F$ is pseudo-monotone plus, the remaining arguments of OMD can be directly carried over into the discussion here; the same result about the a.s. convergence of $(\hat{X}_{k+1/2})_{k \in \nset{}{+}}$ can then be justified. 
\end{proof}

\begin{proof}
(Proof of Corollary~\ref{coro:variant-asconvg})
For OMD and RMD, we apply the same arguments in Theorem~\ref{thm:asconvg} to obtain the three claims and the existence of the (sub)subsequence $(\ell_m)_{m \in \nset{}{}}$ after using the R-S theorem. 
Along $(\ell_m)_{m \in \nset{}{}}$, we have $X_{\ell_m + 1/2}(\omega) \to X^\dagger(\omega)$, $X_{\ell_m}(\omega) \to X^\dagger(\omega)$, and the limit point $X^\dagger(\omega)$ satisfies $\langle F(X^\dagger(\omega)), X^\dagger(\omega) - x_*\rangle = 0$. 
If $X^\dagger(\omega)$ is a critical point, then the remaining statements of Theorem~\ref{thm:asconvg} can again be applied here to conclude the a.s. convergence of $(\hat{X}_{k+1/2})_{k \in \nset{}{+}}$. 
In the remaining proof, we will show that $X^\dagger(\omega)$ is a critical point of $\mathcal{G}$ under Def.~\ref{def:regu} $(\romannum{3}) - (\romannum{6})$.

For Def.~\ref{def:regu} $(\romannum{5})$, since $\langle F(x), x - x_*\rangle > 0$ for all $x \in \mathcal{X}\backslash \mathcal{X}_*$ while $\langle F(X^\dagger(\omega)), X^\dagger(\omega) - x_*\rangle = 0$, we have that $X^\dagger(\omega) \in \mathcal{X}_*$. 

For Def.~\ref{def:regu} $(\romannum{3})$ and $(\romannum{4})$, $X^\dagger(\omega) \in \mathcal{X}_*$ directly follows from the fact that either strict or strong pseudo-monotonicity is a sufficient condition for strict coherence. 

For Def.~\ref{def:regu} $(\romannum{6})$, given that $x_*$ is a critical point, we have $\Phi(x_*) \leq \Phi(x)$ for all $x\in \mathcal{X}$, i.e., it is a global minimum. 
The relation $\langle F(X^\dagger(\omega)), x_* - X^\dagger(\omega)\rangle  = 0$ further implies $\Phi(x_*) \geq \Phi(X^\dagger(\omega))$. 
We can thus conclude that $X^\dagger(\omega)$ is a global minimum for $\Phi$ and hence a critical point. 
\end{proof}

\subsection{Ergodic Convergence Rates in Merely Monotone Games}\label{appd:ergd-convg}

\begin{proof}
(Proof of Theorem~\ref{thm:erg-rate}) 
\\
\textbf{Algorithm~\ref{alg:optm-mrdesc}.} 
Under Assumptions~\ref{asp:objt-set} and \ref{asp:lipschitz}, we can obtain the recurrent inequality \eqref{eq:optm-bd}. 
From the proof of Lemma~\ref{le:bdd-stoch-err}, there exists a constant $\bar{g}$ such that $\sup_{k \in \nset{}{+}, \omega \in \Omega} \norm{G_k(\omega)}_* \leq \bar{g}$ and $\bar{\nabla} \coloneqq \max_{x \in \mathcal{X}}\norm{F(x)}_*$. 
Taking expectations of both sides of \eqref{eq:optm-bd} gives: 
\begin{align*}
\expt{}{\gamma_k \langle F(X_{k+1/2}), X_{k+1/2} - p\rangle} \leq \expt{}{D(p, X_k)} - \expt{}{D(p, X_{k+1})} + \Delta_k, 
\end{align*}
where we let $\Delta_k \coloneqq \alpha_B D_{\mathcal{X}}\gamma_k\delta_k + 2\gamma_k^2\bar{g}^2/\tilde{\mu}$ for notational simplicity. 
Since $F$ is a monotone operator over $\mathcal{X}$, we have $\langle F(p), \hat{X}_{k+1/2} - p \rangle \leq \langle F(p), X_{k+1/2} - p \rangle + \norm{F(p)}_*\norm{\hat{X}_{k+1/2} - X_{k+1/2}} \leq \langle F(X_{k+1/2}), X_{k+1/2} - p \rangle + \bar{\nabla}\alpha_{\mathcal{X}}\delta_k$, with $\alpha_{\mathcal{X}}$ denoting some constant depending on the geometry of $\mathcal{X}$. 
Let $\Delta^\star_k \coloneqq \Delta_k + \bar{\nabla}\alpha_{\mathcal{X}}\delta_k\gamma_k$
Now, by a simple telescoping sum from $t=1$ to $k$, we get:
\begin{align*}
\frac{1}{\sum_{t=1}^k \gamma_t}\sum_{t=1}^k \expt{}{\langle F(p), \gamma_t(\hat{X}_{t+1/2} - p)\rangle} \leq \frac{\expt{}{D(p, X_1)} + \sum_{t=1}^{k} \Delta^\star_t}{\sum_{t=1}^k \gamma_t}. 
\end{align*}
Under the assumption of $\gamma_k$ and $\delta_k$ in \eqref{eq:merelm-params}, there exists a constant $M$ such that the incremental sequence $\sum_{t=1}^k\Delta^\star_t \leq M$ for all $k \in \nset{}{+}$. 
Lastly, applying the definitions in \eqref{eq:merit-func} and \eqref{eq:erg-avg} and letting $p^* \coloneqq \argmax_{p \in \mathcal{X}} \langle F(p), \Check{X}_{k} - p\rangle$ readily yields:
\begin{align*}
\expt{}{\text{Err}_{\mathcal{X}}(\check{X}_k)} \leq \frac{\expt{}{D(p^*, X_1)} + M}{\sum_{t=1}^k \gamma_t} \leq \frac{\expt{}{\max_{p \in \mathcal{X}} D(p, X_1)} + M}{\sum_{t=1}^k \gamma_t},
\end{align*}
and our proof for OMD is complete. 
\\\\
\textbf{Algorithm~\ref{alg:refl-desc}.} 
In the same vein as the proof of Theorem~\ref{thm:asconvg}, we readily obtain the following building block from \eqref{eq:rfltdesc-cond-stdineq2} for telescoping:
\begin{align*}
\expt{}{\gamma_k\langle F(X_{k}), X_{k} - p\rangle} \leq \expt{}{D(p, X_k)} - \expt{}{D(p, X_{k+1})} + \Delta_{k}, 
\end{align*}
where $\Delta_k \coloneqq \alpha_BD_{\mathcal{X}_a}\gamma_k\delta_k + (\gamma_k\bar{g})^2/\Tilde{\mu} + (\Tilde{L}\gamma_{k-1}\Bar{g})^2/\Tilde{\mu}^3 + \Delta^\prime_k$. 
By the monotonicity of $F$ over $\mathcal{X}$, we have $\langle F(p), \hat{X}_{k+1/2} - p\rangle \leq \norm{F(p)}_* (\norm{\hat{X}_{k+1/2} - {X}_{k+1/2}} + \norm{{X}_{k+1/2} - {X}_{k}}) + \langle F(p), X_k - p\rangle \leq \bar{\nabla}\alpha_{\mathcal{X}}\delta_k + \bar{\nabla}\Tilde{L}\Bar{g}\gamma_{k-1}/\Tilde{\mu} + \langle F(X_k), X_k - p\rangle$, with $\alpha_{\mathcal{X}}$ defined as above. 
Let $\Delta^\star_k \coloneqq \Delta_k + \bar{\nabla}\alpha_{\mathcal{X}}\delta_k\gamma_k + \bar{\nabla}\Tilde{L}\Bar{g}\gamma_k\gamma_{k-1}/\Tilde{\mu}$
Likewise, under \eqref{eq:merelm-params}, there exists a constant $M$ such that the incremental sequence $\sum_{t=1}^k\Delta^\star_t \leq M$ for all $k \in \nset{}{+}$. 
The remaining arguments resemble the above ones for OMD, except that now the constant $M$ admits a different value. 
\end{proof}

\subsection{$O(1/k^{1-\epsilon})$ Convergence Rate of the Proposed Algorithms in Strongly Pseudo-Monotone Games}\label{appd:strg-mono-rate}

\begin{appdxlemma}\label{le:recr-lin-ineq}
Let $(a_k)_{k \in \nset{}{+}}$ be a non-negative sequence. 
The power constants $s, t$ satisfy $ 0 < s < t < 1$ and $s + t > 1$. 
If the sequence $(a_k)_{k \in \nset{}{+}}$ satisfy the recurrent linear inequality
\begin{align*}
    a_{k+1} \leq (1 - \frac{c}{k^s})a_k + \frac{d}{k^{t+s}}, 
\end{align*}
for some positive constant $c$ and $d$, then
\begin{align*}
    a_{k} \leq \frac{c_\star}{k^{t+s-1}} + \frac{d_\star}{k}, \forall k \geq K, 
\end{align*}
where the constant index $K$ satisfies $K > cK^{1-s} \geq 1$; the two coefficients are selected as: $c_\star \coloneqq \frac{d}{\floor{cK^{1-s}} - (t+s-1)}$ and $d_\star \coloneqq \max\{0, \frac{K(K-1)\Tilde{a}_0}{K - \floor{cK^{1-s}}}\}$ with $\Tilde{a}_0 \coloneqq a_K - \frac{c_\star}{K^{s+t-1}}$. 
\end{appdxlemma}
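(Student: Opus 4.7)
The proof will proceed by induction on $k$, starting from the base index $K$, with the ansatz $a_k \leq c_\star/k^{t+s-1} + d_\star/k$. The plan is to substitute the ansatz into the recurrence, expand, and then rely on a convexity-based inequality together with the scale separation between $1/k^{s}$ and the other decaying rates to close the induction.

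For the induction step, assume the ansatz holds at $k$. Substituting into the recurrence yields
\begin{align*}
a_{k+1} &\leq \left(1 - \tfrac{c}{k^{s}}\right)\!\left(\tfrac{c_\star}{k^{t+s-1}} + \tfrac{d_\star}{k}\right) + \tfrac{d}{k^{t+s}} \\
&= \tfrac{c_\star}{k^{t+s-1}} + \tfrac{d_\star}{k} - \tfrac{c\, c_\star}{k^{2s+t-1}} - \tfrac{c\, d_\star}{k^{s+1}} + \tfrac{d}{k^{t+s}}.
\end{align*}
On the other side, since $\alpha \mapsto k^{-\alpha}$ is convex, the mean value theorem gives $1/(k{+}1)^{\alpha} \geq 1/k^{\alpha} - \alpha/k^{\alpha+1}$ for $\alpha > 0$. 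Applying this with $\alpha = t+s-1$ and $\alpha = 1$ bounds $c_\star/(k{+}1)^{t+s-1} + d_\star/(k{+}1)$ from below. Comparing the two expressions reduces the inductive step to the two coefficient inequalities
\begin{align*}
c_\star(t+s-1) + d \;\leq\; c\, c_\star\, k^{1-s}, \qquad d_\star \;\leq\; c\, d_\star\, k^{1-s}.
\end{align*}
The second holds automatically as soon as $c K^{1-s} \geq 1$, which is built into the hypothesis on $K$. The first rearranges to $c_\star \geq d/(c k^{1-s} - (t+s-1))$, and monotonicity of the right-hand side in $k$ means it suffices to check this at $k = K$; the declared value $c_\star = d/(\lfloor cK^{1-s}\rfloor - (t+s-1))$ is chosen exactly to meet this with a clean integer denominator (using $\lfloor c K^{1-s}\rfloor \leq cK^{1-s}$ and the assumption $\lfloor c K^{1-s}\rfloor \geq 1 > t+s-1$).

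For the base case at $k = K$ I would simply force the ansatz to hold by choosing $d_\star$ large enough: write $\tilde{a}_0 = a_K - c_\star/K^{t+s-1}$, and if $\tilde{a}_0 \leq 0$ take $d_\star = 0$; otherwise pick $d_\star$ so that $d_\star/K \geq \tilde{a}_0$. The specific formula $d_\star = K(K-1)\tilde{a}_0/(K - \lfloor cK^{1-s}\rfloor)$ is slightly more than the bare minimum $K\tilde{a}_0$; the extra slack (the factor $(K-1)/(K - \lfloor cK^{1-s}\rfloor) \geq 1$) is what lets the inductive step be carried out with the elementary bound $1/(k+1) \geq 1/k - 1/k^2$ without picking up residual losses.

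The main obstacle is bookkeeping the exponents: after the substitution one is left with terms of five different polynomial orders ($k^{-(t+s-1)}$, $k^{-1}$, $k^{-(2s+t-1)}$, $k^{-(s+1)}$, $k^{-(t+s)}$), and one must verify that the \emph{negative} contributions from the contraction $c/k^{s}$ acting on the ansatz dominate both the forcing $d/k^{t+s}$ and the convexity defect produced by shifting $k \to k+1$. The hypotheses $s < t$, $s < 1$, and $s+t > 1$ are precisely what guarantee that $1-s > 0$ (so $k^{1-s}\to\infty$ allows the coefficient inequality for $c_\star$ to be satisfied from some $K$ onward) while keeping $t+s-1 \in (0,1)$ (so the stated decay rate is meaningful and slower than $1/k$, making $d_\star/k$ the right auxiliary term to absorb the initial condition). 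Once these comparisons are made uniformly for $k \geq K$, the induction closes and the lemma follows.
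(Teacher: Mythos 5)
Your proof is correct and lands on the same two structural conditions as the paper's, but you package them differently. The paper first relaxes the contraction $c/k^{s}$ to $\tilde{c}/k$ with $\tilde{c} = \lfloor cK^{1-s}\rfloor$, shifts the sequence by the particular solution $\frac{d}{\tilde{c}-p}k^{-p}$ (using the same convexity bound $\frac{1}{k^{p}} - \frac{1}{(k+1)^{p}} \leq \frac{p}{k^{p+1}}$ you use), and then telescopes the resulting product $\prod_{\ell}\frac{K-\tilde{c}-1+\ell}{K+k-\ell}$ to extract the $O(1/k)$ decay of the residual; you instead run a single unified induction on the full ansatz $c_\star/k^{p} + d_\star/k$ and dispose of the $1/k$ tail with a second application of the same convexity device, reducing everything to the two coefficient inequalities $c_\star p + d \leq c\,c_\star k^{1-s}$ and $1 \leq c\,k^{1-s}$. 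Your route is slightly more elementary (no product manipulation) and your verification that the declared $c_\star$ satisfies $c_\star \geq d/(cK^{1-s}-p)$ via $\lfloor cK^{1-s}\rfloor \leq cK^{1-s}$ is clean; the paper's route is what actually \emph{produces} the exact formula $d_\star = \frac{K(K-1)\tilde{a}_0}{K-\lfloor cK^{1-s}\rfloor}$, which emerges from bounding the telescoped product. On that point your closing remark is slightly off: in your induction the step for the $d_\star/k$ term reduces to $1 \leq c k^{1-s}$ and is independent of the magnitude of $d_\star$, so the factor $(K-1)/(K-\lfloor cK^{1-s}\rfloor) \geq 1$ is not needed to "absorb residual losses" --- any $d_\star \geq K\tilde{a}_0$ closes your base case, and the stated value simply happens to be one such choice. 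This is a cosmetic mischaracterization, not a gap.
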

\begin{proof}
The following is largely inspired by that of \cite[Lem.~1]{chung1954stochastic}. 
We can find a $K$ such that $K > cK^{1-s} \geq 1 > t+s-1$. 
For simplicity, let $p \coloneqq t+s-1$ and $\Tilde{c} \coloneqq \floor{cK^{1-s}}$, where $\Tilde{c} - p > 0$ and $\Tilde{c}/K < 1$. 
Starting from the index $K$, the recursive inequality can be relaxed to be $a_{k+1} \leq (1 - \frac{\Tilde{c}}{k})a_k + \frac{d}{k^{t+s}}$. 
Moreover, note that $\frac{1}{k^p} - \frac{1}{(k+1)^p} \leq \frac{p}{k^{p+1}}$, which implies $\frac{1}{(k+1)^p} - (1 - \frac{\Tilde{c}}{k})\frac{1}{k^p} = \frac{\Tilde{c}}{k^{p+1}} - (\frac{1}{k^p} - \frac{1}{(k+1)^p}) \geq \frac{\Tilde{c} - p}{k^{p+1}}$. 
Substituting $d/k^{p+1}$ with the results above yields:
\begin{align*}
a_{k+1} - \frac{d}{\Tilde{c} - p}\cdot\frac{1}{(k+1)^p} \leq (1 - \frac{\Tilde{c}}{k})(a_k - \frac{d}{\Tilde{c} - p}\cdot\frac{1}{k^p}). 
\end{align*}
If $a_K \leq \frac{d}{\Tilde{c} - p}\cdot\frac{1}{K^p}$, then for all $k \geq K$, $a_k \leq \frac{d}{\Tilde{c} - p}\cdot\frac{1}{k^p}$. 
Otherwise, denote $\Tilde{a}_k = a_{k+K} - \frac{d}{\Tilde{c} - p}\cdot\frac{1}{(k + K)^p}$ for $k \in \nset{}{}$ and we have $\Tilde{a}_{k+1} \leq \frac{k+K-\Tilde{c}}{k+K}\Tilde{a}_k$. 
By telescoping, $\Tilde{a}_{k} \leq \prod_{\ell=1}^{\Tilde{c}\wedge k}\frac{K - \Tilde{c} - 1 + \ell}{K + k - \ell}\Tilde{a}_0 \leq \frac{(K-1)\Tilde{a}_0}{k+(K - \Tilde{c})} \leq \frac{K(K-1)\Tilde{a}_0}{K - \Tilde{c}}\cdot\frac{1}{k+K}$. 
Combining all the results above, we can arrive at the desired bound on $a_k$ as stated in the lemma. 
\end{proof}

\begin{remark}
The results here are more conservative compared to \cite[Lem.~4]{chung1954stochastic} and \cite[Lem.~5]{polyakintroduction}, from which we can deduce that the sequence $(a_k)_{k \in \nset{}{+}}$ considered in Lemma~\ref{le:recr-lin-ineq} has an asymptotic convergence rate of $O(1/k^t)$.
Nevertheless, the results from \cite{chung1954stochastic, polyakintroduction} are in a "$\limsup$" sense and it is not as helpful for explicit convergence characterization. 
In light of this, we leverage the results in Lemma~\ref{le:recr-lin-ineq} where the sequence converges at an explicit rate after a certain fixed iteration $K$. 
\end{remark}

\begin{proof}
(Proof of Theorem~\ref{thm:strmon-eucl-convgrate}) 
\\
\textbf{Algorithm~\ref{alg:optm-mrdesc}.} 
By the strong pseudo-monotonicity of the pseudogradient $F$, we can obtain 
\begin{align*}
& \langle F(X_{k+1/2}), X_{k+1/2} - x_* \rangle \geq \mu\norm{X_{k+1/2} - x_*}^2 \geq \\
& \frac{\mu}{2}\norm{X_{k} - x_*}^2 - \mu\norm{X_{k+1/2} - X_{k}}^2 
\overset{(a)}{\geq} \frac{\mu}{\tilde{L}}D(x_*, X_k) - \mu\norm{X_{k+1/2} - X_{k}}^2,
\end{align*}
where $(a)$ follows from our norm-like restriction in Assumption~\ref{asp:distgenfunc-lipsc}. 
Then, by the Lipschitz continuity of the mirror map, the squared norm $\norm{X_{k+1/2} - X_{k}}^2$ can be bounded as follows:
\begin{align*}
\norm{X_{k+1/2} - X_k}^2 &\leq \norm{\nabla \psi^*(\nabla \psi (X_k) - \gamma_k G_{k-1}) - \nabla \psi^*(\nabla \psi(X_k))}^2 \\
& \leq \frac{1}{\tilde{\mu}^2}\gamma_k^2 \norm{G_{k-1}}^2_*.
\end{align*}
Combining the relation above with \eqref{eq:optm-bd} and taking expectations of both sides of the inequality gives
\begin{align*}
\expt{}{D(x_*, X_{k+1})} \leq (1 - \frac{\mu\gamma_k}{\tilde{L}})\expt{}{D(x_*, X_{k})} + C_{e,1}\gamma_k\delta_k,
\end{align*}
where the last term $C_{e,1}\gamma_k\delta_k$ takes care of the other terms in \eqref{eq:optm-bd} that decay at a faster rate, by assuming $\gamma_k = c_\gamma / (k + b_\gamma)^{a_\gamma}$ and $\delta_k = c_\delta / (k + b_\delta)^{a_\delta}$ with $0 < a_{\delta} < a_{\gamma} < 1$ and $a_{\gamma} + a_{\delta} > 1$.
Following Lemma~\ref{le:recr-lin-ineq}, we have that $\expt{}{D(x_*, X_{k})}$ decays at the rate of $1/k^{a_\gamma + a_\delta - 1}$ starting from certain iteration $K$. 
Recall from the analysis above and the construction of perturbed action that  $\norm{X_{k+1/2} - X_k}^2 \leq \frac{1}{\tilde{\mu}^2}\gamma_k^2 \norm{G_{k-1}}^2_* = C_{e,2}\gamma_k^2$ and $\norm{\hat{X}_{k+1/2} - X_{k+1/2}}^2 = \norm{\delta_k u_k + \delta_kR^{-1}(p - X_{k+1/2})}^2 = C_{e,3}\delta_k^2$ for some constants $C_{e,2}$ and $C_{e,3}$.
Due to the fact that $D(p, x) \geq \tilde{\mu}/2\norm{p - x}^2$ for all $p$ and $x$, we readily have $\expt{}{\norm{\hat{X}_{k+1/2} - x_*}^2} = M_1/k^{a_\gamma + a_\delta - 1} + M_2/k$ for all $k > K$, where $M_1$, $M_2$, and $K$ represent some constants determined by the properties of $\mathcal{G}$ as well as $\gamma_k$ and $\delta_k$ chosen.  
\\\\
\textbf{Algorithm~\ref{alg:refl-desc}.} 
Using the intermediate results from Lemma~\ref{le:bdd-stoch-err}, we first note that $\langle F(X_k), X_k - x_*\rangle \geq \mu\norm{X_k - x_*}^2 \geq \mu/\Tilde{L}D(x_*, X_k)$. 
In the same vein of OMD, we combine \eqref{eq:rfltdesc-cond-stdineq2}, the strong pseudomonotonicity of $F$, and the relation above to obtain
\begin{align*}
\expt{}{D(x_*, X_{k+1})} \leq (1 - \frac{\mu\gamma_k}{\tilde{L}})\expt{}{D(x_*, X_{k})} + C_{e,1}\gamma_k\delta_k,
\end{align*}
with $C_{e,1}\gamma_k\delta_k$ handling all terms decaying no slower than the order of $\gamma_k\delta_k$. 
Carrying over the arguments for OMD, we can thus arrive at the same conclusion for RMD. 
\end{proof}

\bibliographystyle{IEEEtran}
\bibliography{IEEEabrv,references}

\end{document}